\newcommand{\oset}[3][0ex]{%
  \mathrel{\mathop{#3}\limits^{
    \vbox to#1{\kern-2\ex@
    \hbox{$\scriptstyle#2$}\vss}}}}
\newcommand{\backvec}[1]{{\oset{\shortleftarrow}{#1}}}
\newcommand{\babs}[1]{\bigl\lvert #1 \bigr\rvert}
\newcommand{\Babs}[1]{\Bigl\lvert #1 \Bigr\rvert}
\DeclarePairedDelimiterX{\setof}[2]{\{}{\}}{#1 \,:\, #2}
\newcommand{\given}{\,|\,}
\newcommand{\bgiven}{\,\big|\,}
\newcommand{\Bgiven}{\,\Big|\,}
\newcommand{\Var}{\mathrm{Var}}
\newcommand{\BrownBri}{\mathfrak{b}}
\newcommand{\N}{\mathbb{N}}
\newcommand{\Z}{\mathbb{Z}}
\newcommand{\R}{\mathbb{R}}
\newcommand{\bbC}{\mathbb{C}}
\newcommand{\bbD}{\mathbb{D}}
\newcommand{\bbH}{\mathbb{H}}
\newcommand{\bbI}{\mathbb{I}}
\newcommand{\bbM}{\mathbb{M}}
\newcommand{\bbP}{\mathbb{P}}
\newcommand{\rmd}{\mathrm{d}}
\newcommand{\rmi}{\mathrm{i}}
\newcommand{\calF}{\mathcal{F}}
\newcommand{\calM}{\mathcal{M}}
\newcommand{\calN}{\mathcal{N}}
\newcommand{\ThetaJac}{\Theta_J}
\newcommand{\ControlGrad}{\mathrm{CtrInc}}
\theoremstyle{plain}
\newtheorem{theorem}{Theorem}[section]
\newtheorem{lemma}[theorem]{Lemma}
\newtheorem{remark}{Remark}[section]
\newtheorem{claim}{Claim}
\theoremstyle{definition}
\newtheorem*{definition*}{Definition}
\title{Finite-time trajectorial estimates for inhomogeneous random walks}
\author[$\dagger$]{Sébastien Ott}
\author[$\star$]{Yvan Velenik}
\affil[$\dagger$]{EPFL, Lausanne, CH}
\affil[$\star$]{University of Geneva, Geneva, CH}
\date{\today}
\begin{document}

\maketitle

\begin{abstract}
	We consider integer-valued random walks with independent but not identically distributed increments, and extend to this context several classical estimates, including a local limit theorem, precise small-ball estimates (both conditional on the final point and unconditional), and bounds on the probability that the random walk trajectory remains positive up to a given time (again, both conditional on the final point and unconditional). Two key features of this work are that the bounds are non-asymptotic, holding true for finite time horizons, and, crucially, that the latter hold uniformly over an entire class of admissible increment sequences. This provides a robust framework for applications. These results are, in particular, tailored for the analysis of processes derived through a time-dependent tilting of the increments of a time-homogeneous random walk.
\end{abstract}

\section{Introduction and main results}
\label{sec:intro}

\subsection{Motivations and existing results}
\label{subsec:intro:motiv}

Let \((X_k)_{k\geq 1}\) be independent and identically distributed (i.i.d.) \(\Z\)-valued random variables. Given a starting point \(u\in\Z\), the associated random walk \((S_n)_{n\geq 0}\) is defined by \(S_0 = u\) and \(S_n=S_{n-1}+X_n\) for \(n\geq 1\).

It is often extremely useful across various contexts to consider a tilted version of the increments \((X_k)_{k\geq 1}\), and consequently of the random walk \((S_n)_{n\geq 0}\). Specifically, fix two real numbers \(a<b\) and, for each \(k\geq 1\), let \(t_k\in [a,b]\). A new sequence of random variables \((\widehat{X}_k)_{k\geq 1}\) is then defined by
\[
	\forall \ell\in\N,\qquad
	P(\widehat{X}_k = \ell) = \frac{e^{t_k \ell}}{E(e^{t_k X_k})} \bbP(X_k=\ell),
\]
and the associated random walk is denoted by \((\widehat S_n)_{n\geq 0}\). Let us also denote by \((\bar{S}_n)_{n\geq 0}\) the random walk with centred increments \(\bar{X}_k = \widehat{X}_k - E(\widehat{X}_k)\).

While the new increments \((\widehat{X}_k)_{k\geq 1}\) (or \((\bar{X}_k)_{k\geq 1}\)) remain independent, they are, in general, no longer identically distributed due to the time-dependent tilt \(t_k\). The restriction that the latter parameters belong to the compact interval \([a,b]\subset\R\) ensures a degree of uniformity for the tilted distributions.

The resulting time-inhomogeneity unfortunately precludes the use of various classical estimates that rely on the i.i.d.\ nature of the increments.  These include, for example, estimates on the probability that the walk stays positive for a time \(n\) (possibly conditioned on its position at time \(n\)), as well as small-ball estimates (also possibly conditioned on the position at time \(n\)). We refer to~\cite{Caravenna+Chaumont-2013}, and references therein, for a treatment of the i.i.d.\ case.

Our primary goal in this work is to provide extensions of such classical results that are valid, in particular, for this specific class of time-inhomogeneous random walks.

We emphasize that while similar estimates have been derived for time-in\-ho\-mo\-ge\-neous random walks (in particular in \cite{Denisov+Sakhanenko+Wachtel-2018}), the focus was on obtaining sharp estimates for the asymptotic behavior (i.e., as \(n\to\infty\)) for a \emph{fixed} sequence of increments.

In contrast, we are interested in estimates valid for a fixed, finite time \(n\), and uniform over any admissible sequence of increments. Moreover, we are satisfied with non-sharp bounds -- that is, upper and lower bounds that differ by a multiplicative constant. We are not aware of results of this type in the existing literature.

We have strived to formulate our results in a manner that allows for their easy importation into other research. To maximize their potential usability, we also tried to weaken the moments assumptions far beyond what would be needed to treat the tilted random walks described above. Two direct applications of the results of this paper are presented in~\cite{Ott+Velenik-2025}, which considers a one-dimensional random walk constrained to stay above a macroscopic concave obstacle, and~\cite{Khettabi-2025}, which analyzes an effective model of a polymer hanging in a gravitational field.

\subsection{Main results and roadmap to the paper}
\label{subsec:intro:results}

We present now, in a somewhat informal way, the main results derived in this work, with references to the precise statements that can be found later in the paper.

Let us denote by \((S_n)_{n\geq 0}\) the time-inhomogeneous random walk, with independent increments \((X_k)_{k\geq 1}\); we shall write \(P_u(\cdot) = P(\cdot \,|\, S_0=u)\). A complete description of our general setup and assumptions is provided in Section~\ref{sec:notations}. The main assumption is a form of uniformity of the increments distributions.

\subsubsection{Local limit theorem}
\label{sssec:Results:LLT}

The first result, which is also an important tool in deriving the ``bridge'' version of some of our other claims, is a version of the local limit theorem for a time-inhomogenous \(\Z\)-valued random walk. It applies in the whole Gaussian regime, i.e., it provides sharp Gaussian approximation for \(P_0(S_n=y)\) for any \(y\) satisfying \(|y-E_0(S_n)| \leq n^\alpha\) with \(\alpha < 2/3\). Namely, it is proved in Theorem~\ref{thm:inhomo_LLT} that, under suitable assumptions,
\[
	\exp(-Cn^{-\min(2-3\alpha,1/3)})
	\leq
	\sqrt{2\pi B_{n}}e^{(y-m_{n})^2/2B_{n}} P_0(S_{n} = y)
	\leq
	\exp(Cn^{-\min(2-3\alpha,1/3)}),
\]
for some \(C\geq 0\), where \(m_n = E_0(S_n)\) and \(B_n = \Var(S_n) = \sum_{i=1}^n \Var(X_i)\).

\subsubsection{Probability that the walk remains positive}
\label{sssec:Results:Positivity}

Let \(\tau = \min\{k\geq 1 \,:\, S_k<0\}\). We are interested in the probability that \(\tau > n\), for a fixed \(n\), as a function of the starting point and possibly, conditionally on \(S_n\).

\smallskip
Let us start with the case of a free endpoint. Let \(A>0, A'>0, s>0\) and consider a sequence of increments satisfying \(E(X_i) = 0\), \(E(X_i^4)\leq A\) and \(E(X_i\mathds{1}_{X_i>0})\geq s\) for all \(i=1,\dots,n\). Then, there exist \(c_-=c_-(A,A',s)>0\) and \(c_+=c_+(A,s)>0\) such that
\[
	\forall 0\leq u\leq A'\sqrt{n},\qquad
	P_u(\tau>n) \geq \frac{c_-(u+1)}{\sqrt{n}},
\]
and
\[
	\forall u\geq 0,\qquad
	P_u(\tau>n) \leq \frac{c_+(u+1)}{\sqrt{n}}.
\]
The lower bound is proved in Lemma~\ref{lem:positivity_general_walks:LB} and the upper bound in Lemma~\ref{lem:positivity_general_walks:UB}.

\smallskip
Let us now turn to the ``bridge'' case. The estimates in this case assume that the increments satisfy the ``uniformity'' assumption detailed in Section~\ref{sec:notations}. Let \(\alpha\in (1/2,2/3)\).
There exist \(C_-,C_+,c_-,c_+\in (0,+\infty)\) and \(n_0\geq 1\) such that, for any \(n\geq n_0\) and any \(0\leq u,v\leq n^{\alpha}\) with \(P_u(\bar{S}_n = v)>0\),
\[
	P_u\bigl(\bar{S}_n = v,\ \min_{i=1,\dots,n} \bar{S}_i \geq 0 \bigr)
	\geq
	\frac{C_- \min(u+1,\sqrt{n})\min(v+1,\sqrt{n})}{n^{3/2}}e^{-c_-(u-v)^2/n},
\]
and
\[
	P_u\bigl(\bar{S}_n = v,\ \min_{i=1,\dots,n} \bar{S}_i \geq 0 \bigr)
	\leq
	\frac{C_+ \min(u+1,\sqrt{n})\min(v+1,\sqrt{n})}{n^{3/2}}e^{-c_+(u-v)^2/n}.
\]
The lower bound is proved in Lemma~\ref{lem:positivity_fixed_endpoint:LB} and the upper bound in Lemma~\ref{lem:positivity_fixed_endpoint:UB}.

\subsubsection{Small-ball estimates}
\label{sssec:Results:SmallBall}

Let us now turn to small-ball estimates, i.e., the probability that the random walk trajectory remains in the interior of a given ``tube'' for a time \(n\).
As above we treat separately the case of a free endpoint, and that of a bridge.

\smallskip
Let us start with the case of a free endpoint. Let \(A>0\), \(\sigma>0\) and consider a sequence of increments satisfying \(E(X_i) = 0\), \(E(|X_i|^3)\leq A\) and \(E(X_i^2) \geq \sigma^2\) for all \(i=1,\dots,n\). Then, there exist \(c_-,c_+>0\), \(\lambda_0\geq 0\) and \(n_0\geq 1\) such that, for any \(n\geq n_0\),
\[
	\forall \lambda\geq\lambda_0,\qquad
	P_0\bigl(\max_{i=1,\dots,n} |S_i| \leq \lambda,\ |S_n|\leq \lambda/2 \bigr) \geq e^{-c_- n/\lambda^2},
\]
and
\[
	\forall \sqrt{n} \geq \lambda\geq \lambda_0,\qquad
	P_0\bigl(\max_{i=1,\dots,n} |S_i| \leq \lambda \bigr) \leq e^{-c_+ n/\lambda^2}.
\]
The lower bound is proved in Lemma~\ref{lem:micro_small_ball_walk_LB} and the upper bound in Lemma~\ref{lem:micro_small_ball_walk_UB}.

\smallskip
Let us now turn to the case of a bridge. As before, we require that increments to satisfy the ``uniformity'' assumptions described in Section~\ref{sec:notations}.
Let \(\epsilon>0\). There exist \(c_-,c_+,C_-,C_+\in (0,+\infty)\), \(\lambda_0\geq 0\) and \(n_0\geq 1\) such that, for any \(n\geq n_0\) and \(\sqrt{n}\geq \lambda\geq \lambda_0\):
\begin{itemize}
	\item For any \(x\in\Z\) with \(|x-m_n|\leq (1-\epsilon)\lambda\),
		\[
			P_0\bigl(\max_{i=1,\dots,n} |S_i - E(S_i)| \leq \lambda,\; S_n = x \bigr)
			\geq 
			\tfrac{C_-}{\lambda} e^{-c_- n/\lambda^2}.
		\]
	\item For any \(x\in\Z\) with \(|x-m_n|\leq \lambda\),
		\[
			P_0\bigl(\max_{i=1,\dots,n} |S_i - E(S_i)| \leq \lambda,\; S_n = x \bigr)
			\leq 
			\tfrac{C_+}{\lambda} e^{-c_+ n/\lambda^2}.
		\]
\end{itemize}
Both bounds are established in Theorem~\ref{thm:small_ball_fixed_endpoint}.

\smallskip
The next results provide better control on the effect of the starting and ending points. They hold under the same ``uniformity'' assumptions.
Let \(K>0\). There exist \(C_-,C_+,c_-,c_+\in (0,+\infty)\) and \(\lambda_0,n_0\geq 0\) such that the following holds. For any \(n\geq n_0\) any \(\lambda_0\leq \lambda \leq K\sqrt{n}\) and any \(0\leq u,v \leq \lambda\) with \(P_u(\bar S_n = v) > 0\),
\begin{multline*}
	P_u\bigl( \forall i\in\{1,\dots,n\},\; 0\leq \bar S_i \leq \lambda,\; \bar S_n = v \bigr) \\
    \geq
    \frac{C_-(\min(u, \lambda-u)+1)(\min(v, \lambda-v)+1)}{\lambda^3} e^{-c_-n/\lambda^2},
\end{multline*}
and
\begin{multline*}
    P_u\bigl( \forall i\in\{1,\dots,n\},\; 0\leq \bar S_i \leq \lambda,\; \bar S_n = v \bigr)
    \\
    \leq
    \frac{C_+(\min(u, \lambda-u)+1)(\min(v, \lambda-v)+1)}{\lambda^3} e^{-c_+n/\lambda^2}.
\end{multline*}
The lower bound is proved in Lemma~\ref{lem:excursions:small_ball:LB} and the upper bound in Lemma~\ref{lem:excursions:small_ball:UB}.

\smallskip
Finally, we consider the complementary case in which the ``tube'' has a width larger than \(\sqrt{n}\), still under the same ``uniformity'' assumptions.
Let \(\alpha \in (0, 2/3)\). There are \(C_-,C_+,c_-,c_+\in (0,+\infty)\), \(n_0\geq 0\), such that the following holds. For any \(n\geq n_0\), any \(\lambda \geq \sqrt{n}\), and any \(0\leq u,v \leq \lambda\) with \(P_u(\bar{S}_n = v)>0\) and \(|u-v|\leq n^{\alpha}\),
\begin{multline*}
	P_u\bigl( \forall i\in\{1,\dots,n\},\; 0\leq \bar S_i \leq \lambda,\; \bar{S}_n = v \bigr)
	\\
	\geq
	\frac{C_-(\min(u, \lambda-u, \sqrt{n})+1)(\min(v, \lambda-v, \sqrt{n})+1)}{n^{3/2}} e^{-c_-(u-v)^2/n},
\end{multline*}
and
\begin{multline*}
	P_u\bigl(  \forall i\in\{1,\dots,n\},\; 0\leq \bar S_i \leq \lambda,\; \bar{S}_n = v \bigr)	\\
	\leq
	\frac{C_+(\min(u, \lambda-u, \sqrt{n})+1)(\min(v, \lambda-v, \sqrt{n})+1)}{n^{3/2}} e^{-c_+(u-v)^2/n}.
\end{multline*}
Both bounds are established in Theorem~\ref{thm:excursions:ceiling}.

\subsubsection{Tail estimates}
\label{sssec:Results:Tails}

The last estimates address the tail of the one-time marginal of a bridge. Their derivation assumes again the ``uniformity'' assumptions of Section~\ref{sec:notations}.
Let \(\beta\in (0,1/6)\). There exist \(n_0,t_0\geq 0\) and \(C_-,C_+,c_-,c_+\in (0,+\infty)\) such that the following holds. For any \(n\geq n_0\) and any \(0\leq u,v \leq \tfrac12 t\sqrt{n}\) with \(P_u(\bar{S}_n = v)>0\),
\[
	P_u\bigl( \min_{i=1,\dots,n} \bar{S}_i\geq 0,\ \bar{S}_{k}\geq t\sqrt{n},\ \bar{S}_n = v\bigr) \geq \frac{C_-\min(u+1,\sqrt{n})\min(v+1,\sqrt{n})}{tn^{3/2}} e^{-c_- t^2},
\]
and
\[
	P_u\bigl(\min_{i=1,\dots,n} \bar{S}_i\geq 0,\ \bar{S}_{k}\geq t\sqrt{n},\ \bar{S}_n = v\bigr) \leq \frac{C_+\min(u+1,\sqrt{n})\min(v+1,\sqrt{n})}{tn^{3/2}} e^{-c_+ t^2},
\]
for all \(n/3\leq k\leq 2n/3\), and all \(t_0\leq t\leq n^{\beta}\).
Both bounds are proved in Lemma~\ref{lem:tails}.

\subsection{Some remarks and open questions}
\label{subsec:intro:open}

We collect here various observations and remarks about our setup, hypotheses, and results.

One can first wonder about the optimality of the hypotheses for the positivity results described in Section~\ref{sssec:Results:LLT}.
First, we have that our hypotheses
\begin{equation*}
    E(X) = 0,\quad E(X\mathds{1}_{X>0})\geq s, \quad E(X^4) \leq A,
\end{equation*}
for some \(s,A\in (0,+\infty)\), could be replaced by asking instead
\begin{equation*}
    E(X) = 0,\quad E(X^2) \geq a, \quad E(X^4) \leq A,
\end{equation*}
for some \(a,A\in (0,+\infty)\). This is the content of Lemma~\ref{lem:equivalent_second_moment_cond}.

\begin{lemma}
	\label{lem:equivalent_second_moment_cond}
	Let \(A>0\). Let \(X\) be a real random variable with \(E(X^4)\leq A\), \(E(X)=0\). Then,
	\begin{itemize}
		\item for any \(s>0\), \(E(X\mathds{1}_{X>0}) \geq s\) implies \(E(X^2) \geq 4s^2\);
		\item for any \(\sigma>0\), \(E(X^2) \geq \sigma^2\) implies \(E(X\mathds{1}_{X>0}) \geq \frac{\sigma^3}{4\sqrt{2A}}\).
	\end{itemize}
\end{lemma}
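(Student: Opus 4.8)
The plan is to reduce both implications to a comparison between $E(|X|)$ and the second (and, for the second bullet, fourth) moment of $X$, exploiting the centring hypothesis.

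First I would record the elementary identity $E(X\mathds{1}_{X>0}) = \tfrac12 E(|X|)$, valid whenever $E(X)=0$: writing $X_+=\max(X,0)$ and $X_-=\max(-X,0)$, the hypothesis $E(X)=0$ forces $E(X_+)=E(X_-)$, and since $E(X_+)=E(X\mathds{1}_{X>0})$ while $E(X_+)+E(X_-)=E(|X|)$, the claim follows. This is the only place the centring hypothesis enters. For the first bullet, the hypothesis then reads $E(|X|)\geq 2s$, and Jensen's inequality (equivalently Cauchy--Schwarz) immediately gives $E(X^2)\geq E(|X|)^2\geq 4s^2$.

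For the second bullet I would run a truncation argument. For any $t>0$,
\[
	\sigma^2 \leq E(X^2) = E\bigl(X^2\mathds{1}_{|X|\leq t}\bigr) + E\bigl(X^2\mathds{1}_{|X|>t}\bigr) \leq t\,E(|X|) + \frac{1}{t^2}E(X^4) \leq t\,E(|X|) + \frac{A}{t^2},
\]
where the middle term uses $E\bigl(X^2\mathds{1}_{|X|\leq t}\bigr)\leq t\,E(|X|)$ and the last one uses $X^2\mathds{1}_{|X|>t}\leq t^{-2}X^4$ together with $E(X^4)\leq A$. Choosing $t=\sqrt{2A}/\sigma$ makes $A/t^2=\sigma^2/2$, hence $t\,E(|X|)\geq \sigma^2/2$, i.e. $E(|X|)\geq \sigma^3/(2\sqrt{2A})$, and therefore $E(X\mathds{1}_{X>0})=\tfrac12 E(|X|)\geq \sigma^3/(4\sqrt{2A})$.

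There is essentially no obstacle: the only mild points of care are the identity $E(X\mathds{1}_{X>0})=\tfrac12 E(|X|)$ and the optimal choice of the truncation level $t$. As a remark, one could alternatively derive the second bound from the Hölder interpolation $E(X^2)\leq E(|X|)^{2/3}E(X^4)^{1/3}$, which yields $E(|X|)\geq \sigma^3/\sqrt{A}$ and hence the slightly larger constant $\sigma^3/(2\sqrt{A})$; the truncation route is preferred here since it reproduces exactly the stated constant.
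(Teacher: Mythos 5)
Your proof is correct and follows essentially the same route as the paper: the identity $E(X\mathds{1}_{X>0})=\tfrac12 E(|X|)$, Jensen for the first bullet, and a truncation at the same level $t=\sqrt{2A}/\sigma$ for the second. The only cosmetic difference is that you bound the tail contribution via the pointwise inequality $X^2\mathds{1}_{|X|>t}\le t^{-2}X^4$, whereas the paper reaches the same bound $A/t^2$ through Cauchy--Schwarz and Chebyshev; yours is marginally more direct but quantitatively identical.
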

\begin{proof}
	First, note that as \(E(X)=0\), \(E(X\mathds{1}_{X>0}) = - E(X\mathds{1}_{X<0})\) so
	\begin{equation*}
		E(|X|) = 2E(X\mathds{1}_{X>0}).
	\end{equation*}
	Suppose first that \(E(X\mathds{1}_{X>0})\geq s>0\). Then, by Jensen inequality
	\begin{equation*}
		E(X^2) \geq E(|X|)^2 \geq 4s^2.
	\end{equation*}
	Suppose then that \(E(X^2) \geq \sigma^2 >0\). We have for \(K>0\),
	\begin{multline*}
		E(|X|)
		\geq
		E(|X|\mathds{1}_{|X|\leq K})
		\geq
		\tfrac{1}{K}E(X^2\mathds{1}_{|X|\leq K})
		=
		\tfrac{1}{K}\bigl(E(X^2) - E(X^2\mathds{1}_{|X|> K})\bigr)
		\\
		\geq
		\tfrac{1}{K}\bigl(\sigma^2 - E(X^4)^{1/2} P(|X|> K)^{1/2}\bigr)
		\geq
		\tfrac{1}{K}\bigl(\sigma^2 - \sqrt{A} \tfrac{\sqrt{A}}{K^2}\bigr).
	\end{multline*}
	Choosing \(K=\sqrt{2A}/\sigma\), we get \(E(|X|) \geq \frac{\sigma^3}{2\sqrt{2A}}\) and thus \(E(X\mathds{1}_{X>0}) \geq \frac{\sigma^3}{4\sqrt{2A}}\).
\end{proof}
Them, looking at the proofs of Lemmas~\ref{lem:positivity_general_walks:LB},~\ref{lem:positivity_general_walks:UB}, one can notice that the same proofs will work under a \(3+\epsilon\) uniform moment condition for any fixed \(\epsilon>0\). We believe (but did not check carefully) that mild adaptations of our arguments will allow to handle the case of a uniform third moment. We however have no idea how to treat lower uniform moments, and extending
the results described in Section~\ref{sssec:Results:Positivity}
to this case will probably requires new ideas. Obtaining optimal moment condition seems to be a nice open problem. The ``obvious'' tentative of asking a second moment uniformly bounded away from \(0\) and \(+\infty\) fails as is shown in Remark~\ref{rem:unif_second_moment_not_enough}.

\begin{remark}
\label{rem:unif_second_moment_not_enough}
    One could think that a condition of the form \(E(X) = 0\) and \(\sigma_-^2\leq E(X^2)\leq \sigma_+^2\) for some \(\sigma_-,\sigma_+\in (0,+\infty)\) would be the optimal condition. It is not the case as the following example demonstrates. Let \(X_1,X_2,\dots\) be an independent sequence of random variables with law
    \begin{equation*}
        P(X_i = i+1) = P(X_i = -i-1) = \frac{1}{2(i+1)^2},
        \quad
        P(X_i = 0) = 1-\tfrac{1}{(i+1)^2}.
    \end{equation*}
    These variables satisfy \(E(X_i) = 0\), and \(E(X_i^2) = 1\). But, for any \(u>0\),
    \begin{equation*}
        P_u\bigl(\min_{i=1,\dots n} S_i \geq 0\bigr)
        \geq
        P_u\bigl(\cap_{i=1}^n \{X_i= 0\}\bigr)
        \geq
        \exp\Bigl(-\tfrac{4}{3}\sum_{i=1}^{\infty} i^{-2} \Bigr)
        =
        \exp(-2 \pi^2/9),
    \end{equation*}which is uniformly lower bounded over \(n\). One should thus (at least) either ask for a uniform upper bound on strictly more than \(2\) moments, or a lower bound on strictly less than \(2\) moments.
\end{remark}

\begin{remark}
The point-wise estimates that we prove in Sections~\ref{sec:small_ball_lattice_bridges},~\ref{sec:fixed_endpoint}, and~\ref{sec:excursions} rely on an inhomogeneous version of the Local Limit Theorem (Theorem~\ref{thm:inhomo_LLT}). We work under an exponential moment condition as our motivation comes from walks with exponential moments. The results/proofs relying on the LLT extend to ``only'' a fourth moment condition and the ``uniform aperiodicity condition'' once one adds the restriction that the endpoint is in the CLT (\(O(\sqrt{n})\)) regime.
\end{remark}

\section{Setup, notations, conventions}
\label{sec:notations}

\subsection{General notations}
\label{subsec:notations:general_not}

If \((a_{i})_{i\in I}\), \((b_{i})_{i\in I}\) are two indexed collections of real numbers, we will write \(a\leq b\) for \(a_i\leq b_i\) for all \(i\in I\). We will often leave the choice of \(I\) implicit in the notation when there is no risk of confusion. For \(c\in \R\), we also write \(a\geq c\) as a shorthand for \(a_i\geq c \ \forall i\in I\). 

\subsection{Probability measures and walks}
\label{subsec:notations:proba_walks}

We will work on some abstract probability space \((\Omega,\calF,P)\), and suppose that it is large enough to contain all the needed variables.

\subsubsection*{General walks}

For definiteness, \(S_0\) will be a \(\calN(0,1)\) random variable, independent of everything else. The particular distribution of \(S_0\) will not play any role, as we will always condition on its value. For a sequence of random variables \(X_1,X_2,\dots\), we define
\begin{equation*}
	S_{k,n} = \sum_{i=k}^{n} X_i,
	\quad
	\bar{X}_i = X_i-E(X_i),
	\quad \bar{S}_{k,n} = \sum_{i=k}^{n} \bar{X}_i,
	\ 1\leq k \leq n.
\end{equation*}
We also will often use the notations
\begin{equation*}
	m_i = E(S_{1,i}),\quad B_i = \Var(S_{1,i}).
\end{equation*}
We also define the walk started at \(S_0\):
\begin{equation*}
	S_n = S_0 + S_{1,n},
	\quad
	\bar{S}_n \equiv S_0 + \bar{S}_{1,n},
\end{equation*}
and the first return time to the lower half space
\begin{equation*}
	\tau = \min\{k\geq 1: \ S_k<0\}.
\end{equation*}
Finally, we will use the notation
\begin{equation*}
	P_u \equiv P(\cdot \given S_0=u).
\end{equation*}

\subsubsection*{Integer-valued walks}

We say that a \(\Z\)-valued random variable \(X\) is \emph{aperiodic} and/or \emph{irreducible} if the random walk with i.i.d.\ steps having the law of \(X\) is.
Denote \(\calM_{\Z}\) the set of probability measures on \(\Z\) identified with the set or sequences in \([0,1]^{\Z}\) summing to \(1\). We say that a probability measure \(p\in \calM_{\Z}\) is aperiodic and/or irreducible if a random variable having law \(p\) is.
We shall also say that a sequence \(a\in [0,1]^{\Z}\) is aperiodic and/or irreducible if there is an aperiodic/irreducible probability measure \(p\in \calM_{\Z}\), and positive numbers \(b_i>0, i\in \Z\) such that \(a_i = b_ip(i)\).
For \(\delta,c>0\), \(a\in [0,1]^{\Z}\), define
\begin{equation*}
    \calM_{\delta,c}^a = \setof[\Big]{ p\in \calM_{\Z}}{\sup_{\abs{t}\leq \delta}\sum_{n\in \Z}p(n)e^{t n}\leq c,\ p(i) \geq a_i \ \forall i\in \Z }.
\end{equation*}
Note that for any \(a,\delta,c\), \(\calM_{\delta,c}^a\) is compact. Moreover, if \(a\) is irreducible and/or aperiodic, so are every \(p\in \calM_{\delta,c}^a\). Note that when \(a\) is irreducible,
\begin{equation*}
    0< \sigma_-(a,\delta,c)\coloneqq \inf_{p\in \calM_{\delta,c}^a} \Var_p(X) \leq \sup_{p\in \calM_{\delta,c}^a} \Var_p(X) \eqqcolon \sigma_+(a,\delta,c) <\infty
\end{equation*}
where \(X\sim p\).
For \(p\in \calM_{\Z}\) with some exponential moments, we will denote
\begin{gather*}
    M_p(z) = \sum_{k\in \Z}p(k) e^{z k},\quad
    H_p(z) = \ln(M_p(z)),\\
    \mu_p = E_p(X) = H_p'(0),\quad
    \sigma_p^2 = \Var_p(X)=H_p''(0),
\end{gather*}
where \(X\sim p\).

\subsection{Constants}
\label{subsec:notations:constants}

Numbered constants of the form \(c_1,c_2,C_1,C_2,\dots\) have a value that is fixed throughout the paper, while the use of non-numbered constants like \(c,C,c',C',\dots\) will be allowed to vary from line to line in the course of a proof, and are depending only on parameters which values are fixed in the concerned theorem/lemma.

\subsection{Some generalities about walks}
\label{subsection:mart_submart_FKG}

We collect a few standard facts about random walks that we will use repeatedly.

\begin{enumerate}
    \item If \(X_1,\dots,X_n\) is an independent sequence of real random variables, the random vector \((X_1,\dots,X_n)\) satisfies the FKG inequality for the coordinate-wise partial order on \(\R^n\).
    \item If \((X_k)_{k\geq 1}\) is an independent sequence of centred real random variables, \((S_{n})_{n\geq 0}\) is a martingale (for the canonical filtration).
    \item If \(X_1,\dots,X_n\) is an independent sequence of centred real random variables, \(u\in \R\), and \(h:\R^n \to \R\) is a function, \((S_{k})_{k=0}^n\) is a submartingale (for the canonical filtration) under the measures \(P_u(\cdot \given \min_{i=1,\dots,n} S_i-h_i \geq 0)\) and \(P_u(\cdot \given \min_{i=1,\dots,n} S_i-h_i > 0)\). Indeed, denoting \(A_i(r) = \{X_i\geq r\}\), for \(i\geq 0\), one has that, almost surely,
    \begin{multline*}
        E_u\bigl(S_{i+1}\bgiven X_1,\dots,X_i, \cap_{j=1}^n A_j(h_j-S_{j-1})\bigr)
        \\
        =
        S_i + E_u\bigl(X_{i+1}\bgiven S_i, \cap_{j=i+1}^n A_j(h_j-S_{j-1})\bigr)
        \geq
        S_i + E(X_{i+1})
        =
        S_i,
    \end{multline*}
    where we used the Markov's property and that \(\cap_{j=i+1}^n A_j(h_j-S_{j-1})\) is an increasing event for \((X_{i+1},\dots,X_n)\).
\end{enumerate}

Finally, we will often use the following classical large deviation bound.
\begin{lemma}
    \label{lem:very_large_tails_UB}
    Let \(c_0,\delta_0, \sigma_- >0\). Then, there is \(c,\rho\in (0,+\infty)\) such that the following holds. For any \(n\geq 1\), any \(X_1,\dots, X_n\) independent sequence of random variables with
    \begin{equation*}
        E(X_i) = 0,\quad E(X_i^2) \geq \sigma_-^2,\quad \sup_{|z|\leq \delta_0}E(e^{zX_i}) \leq c_0,\quad i=1,\dots, n,
    \end{equation*}
    \begin{equation*}
        P_0\bigl(\bar{S}_{n}\geq t \bigr)
        \leq
        \begin{cases}
            e^{-ct^2/n} & \text{ if } t\leq \rho n,
            \\
            e^{-ct} & \text{ if } t> \rho n.
        \end{cases}
    \end{equation*}
\end{lemma}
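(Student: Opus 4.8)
The plan is a textbook Chernoff bound combined with a uniform quadratic estimate of the log-moment generating function; the only genuine point of care is that exponential moments are assumed only on the fixed interval $[-\delta_0,\delta_0]$, so the Chernoff parameter must be kept bounded away from $\delta_0$ and the regime $t>\rho n$ handled separately by saturating it. The hypothesis $E(X_i^2)\geq\sigma_-^2$ plays no role for this upper-tail estimate and will simply be ignored. Throughout, recall that under $P_0$ one has $\bar S_n=\sum_{i=1}^n X_i$, the $X_i$ being centred.

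\emph{Step 1: a uniform bound $E(e^{zX_i})\le e^{Kz^2}$ for $0\le z\le\delta_0/2$.} Using the Lagrange form of Taylor's remainder, $e^x=1+x+\tfrac12 x^2 e^\xi$ with $\xi$ between $0$ and $x$, together with $E(X_i)=0$, I would write $E(e^{zX_i})\le 1+\tfrac{z^2}{2}E\bigl(X_i^2 e^{z|X_i|}\bigr)$. Then the elementary inequality $y^2\le \tfrac{16}{\delta_0^2 e^2}e^{(\delta_0/2)y}$ for $y\ge 0$ (the maximum of $y^2 e^{-(\delta_0/2)y}$) gives $X_i^2 e^{z|X_i|}\le\tfrac{16}{\delta_0^2 e^2}e^{\delta_0|X_i|}$ whenever $z\le\delta_0/2$, while $E(e^{\delta_0|X_i|})\le E(e^{\delta_0 X_i})+E(e^{-\delta_0 X_i})\le 2c_0$. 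Hence $E(e^{zX_i})\le 1+Kz^2\le e^{Kz^2}$ with $K\coloneqq 16c_0/(\delta_0^2 e^2)$, uniformly in $i$ and over the admissible class of increment sequences.

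\emph{Step 2: Chernoff and optimisation.} By Markov's inequality applied to $e^{z\bar S_n}$ and independence, for every $0\le z\le\delta_0/2$,
\[
    P_0(\bar S_n\ge t)\le e^{-zt}\prod_{i=1}^n E(e^{zX_i})\le \exp\bigl(-zt+Knz^2\bigr).
\]
If $t\le K\delta_0 n$, the choice $z=t/(2Kn)$ is admissible and yields $P_0(\bar S_n\ge t)\le\exp\bigl(-t^2/(4Kn)\bigr)$. If $t>K\delta_0 n$, taking $z=\delta_0/2$ and using $n<t/(K\delta_0)$ gives $-zt+Knz^2=-\tfrac{\delta_0}{2}t+\tfrac{K\delta_0^2}{4}n<-\tfrac{\delta_0}{4}t$. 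Setting $\rho\coloneqq K\delta_0$ and $c\coloneqq\min\{1/(4K),\,\delta_0/4\}$ then gives the two claimed bounds.

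I expect no real obstacle: the whole argument is routine once Step 1 is in place, and the one thing to get right there is that the constant $K$ depends only on $(c_0,\delta_0)$ — which is precisely what makes the final estimate hold uniformly over the entire class of admissible increment sequences.
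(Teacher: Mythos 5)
Your proof is correct and follows essentially the same route as the paper's: a Chernoff bound built on a uniform quadratic bound $\ln E(e^{zX_i}) \leq K z^2$ for the log-moment generating function, with the exponent optimised when $t \leq \rho n$ and saturated at $z=\delta_0/2$ when $t > \rho n$. Your rendering is somewhat more explicit (concrete constants obtained from the elementary bound on $y^2 e^{-\delta_0 y/2}$) and correctly observes that the lower bound on $E(X_i^2)$ is immaterial for this upper-tail estimate, even though the paper nominally lists $\sigma_-$ among the parameters on which $c,\rho$ depend.
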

\begin{proof}
    First, as the \(X_i\)'s are centred with uniform exponential moments, there is \(r_0>0\) depending only on \(c_0,\delta_0\) such that for \(|z|\leq r_0\),
    \begin{equation*}
        \ln E(e^{z X_i}) = \Var(X_i)\tfrac{z^2}{2} + O(z^3),
    \end{equation*}
    with \(O(z^3)\) being uniform over the law of \(X_i\). Thus, as \(\Var(X_i)\geq \sigma_-^2\), there is \(c>0\) depending only on \(c_0,\delta_0, \sigma_-\) such that for any \(z\) small enough,
    \begin{equation*}
        \ln E(e^{zX_i}) \leq c z^2.
    \end{equation*}
    In particular, for those values of \(z\),
    \begin{equation}
    \label{eq:prf:lem:very_large_tails_UB:MGF}
        E(e^{z S_n})\leq e^{cz^2n}
    \end{equation}
    In particular, taking \(z = \frac{t}{2cn}\) and using Chebychev's inequality, we get that for \(t\) less than \(\rho n\) for some \(\rho>0\),
    \begin{equation*}
        P_0(\bar{S}_{n}\geq t)
        \leq
        \exp(- t^2/4cn),
    \end{equation*}
    which is the first part of the claim. The second part follows again from Chebychev's inequality: taking \(z > 0\) small enough in~\eqref{eq:prf:lem:very_large_tails_UB:MGF}, and using Chebychev's inequality, we get that for \(t\geq \rho n\),
    \begin{equation*}
        P_0(\bar{S}_{n}\geq t) \leq e^{cz^2n}e^{-zt} \leq e^{(cz-\rho )z n}e^{-zt/2} \leq e^{-zt/2}.
        \qedhere
    \end{equation*}
\end{proof}

\section{Trajectory estimates: general walks}
\label{sec:traj_est_walks}

The first class of results concern general walks with free endpoint.

\subsection{Inhomogeneous CLT}
\label{subsec:traj_est_walks:inhomog_CLT}

\begin{theorem}
	\label{thm:inhomog_CLT}
	Let \(A>0\), \(n\geq 1\). Let \(X_1,\dots,X_n\) be an independent sequence of random variables with
	\begin{equation*}
		E(X_i) = 0,\quad E(|X_i|^3)\leq A,\ i=1,\dots, n.
	\end{equation*}
	Then,
	\begin{equation*}
		\sup_{x\in \R} \Big| P\bigl(\tfrac{1}{\sqrt{B_n}}S_{1,n} \leq x\bigr) - P\bigl(\calN(0,1) \leq x\bigr) \Big|
		\leq
		\frac{C A n}{B_n^{3/2}},
	\end{equation*}
	where \(C>0\) is a universal constant.
\end{theorem}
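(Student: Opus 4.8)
The plan is to run the classical Fourier (characteristic-function) proof of the Berry--Esseen estimate, adapted to non-identically-distributed summands. Normalise by setting $\xi_i=X_i/\sqrt{B_n}$, so that $\sum_{i=1}^n\Var(\xi_i)=1$; write $\sigma_i^2=\Var(\xi_i)$, $\beta_i=\babs{\xi_i}^3$'s expectation $E\abs{\xi_i}^3=E\abs{X_i}^3/B_n^{3/2}$ and $\gamma=\sum_i\beta_i\le An/B_n^{3/2}$, and let $F_n$ be the distribution function of $\sum_i\xi_i$, $\Phi$ that of $\calN(0,1)$, and $f_n(t)=\prod_i\phi_i(t)$ with $\phi_i$ the characteristic function of $\xi_i$. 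It suffices to prove $\sup_x\abs{F_n(x)-\Phi(x)}\le C\gamma$ for a universal $C$; we may freely assume $\gamma$ smaller than a suitable universal constant, since otherwise the left-hand side, being $\le1$, is already $\le C\gamma$. Two elementary facts will be used repeatedly: by Jensen's inequality $\sigma_i^2\le\beta_i^{2/3}$ (so $\sigma_i^3\le\beta_i$ and $\max_i\sigma_i^2\le\gamma^{2/3}$); and $\phi_i(t)=1-\tfrac12\sigma_i^2t^2+\theta_i(t)$ with $\abs{\theta_i(t)}\le\tfrac16\beta_i\abs t^3$ (third-order Taylor bound).

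The backbone is Esseen's smoothing inequality: for every $T>0$,
\[
	\sup_x\abs{F_n(x)-\Phi(x)}\ \le\ \frac1\pi\int_{-T}^{T}\frac{\abs{f_n(t)-e^{-t^2/2}}}{\abs t}\,\rmd t\ +\ \frac{C_0}{T}
\]
with $C_0$ universal. I take $T=a/\gamma$ for a small universal $a$, so the boundary term is $\le(C_0/a)\gamma$, and it remains to bound the integral by $O(\gamma)$. I split the domain into a ``Gaussian range'' $\abs t\le\gamma^{-1/3}$ and a ``far range'' $\gamma^{-1/3}\le\abs t\le a/\gamma$ (for $\gamma$ small the former sits inside $[-T,T]$).

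On the Gaussian range every summand is tame: $\sigma_i^2t^2\le\gamma^{2/3}\cdot\gamma^{-2/3}=1$ and $\abs{\theta_i(t)}\le\tfrac16\beta_i\abs t^3\le\tfrac16$ (since $\abs t^3\le\gamma^{-1}$ and $\beta_i\le\gamma$), so $\abs{\phi_i(t)-1}<1$ and one may write $\ln\phi_i(t)=-\tfrac12\sigma_i^2t^2+R_i(t)$; a short computation bounding the remainder of $\ln(1+\cdot)$ via the two facts above gives $\abs{R_i(t)}\le C_1\beta_i\abs t^3$ for a universal $C_1$. Summing over $i$, $\ln f_n(t)=-\tfrac12t^2+R(t)$ with $\abs{R(t)}\le C_1\gamma\abs t^3\le C_1$ on this range, hence $\abs{f_n(t)-e^{-t^2/2}}=e^{-t^2/2}\abs{e^{R(t)}-1}\le C_2\gamma\abs t^3e^{-t^2/2}$; dividing by $\abs t$ and integrating over $\R$ gives a contribution $\le C_3\gamma$. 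On the far range the Gaussian part contributes at most $\int_{\abs t\ge\gamma^{-1/3}}e^{-t^2/2}\abs t^{-1}\,\rmd t$, which is smaller than any power of $\gamma$. For $\abs{f_n(t)}$ one keeps only the ``small-variance'' indices $S=\{i:\sigma_i^2\le a^{-2}\gamma^2\}$, bounding $\abs{\phi_i(t)}\le1$ for $i\notin S$. Crucially, $i\notin S$ forces $\beta_i\ge\sigma_i^3=\sigma_i\cdot\sigma_i^2>a^{-1}\gamma\,\sigma_i^2$, so $\sum_{i\notin S}\sigma_i^2<a\gamma^{-1}\sum_{i\notin S}\beta_i\le a$; and for $i\in S$ and $\abs t\le a/\gamma$ one has $\sigma_i^2t^2\le1$, hence $\abs{\phi_i(t)}\le(1-\tfrac12\sigma_i^2t^2)+\tfrac16\beta_i\abs t^3\le\exp(-\tfrac12\sigma_i^2t^2+\tfrac16\beta_i\abs t^3)$ (the last step by $1+y\le e^y$). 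Multiplying over $i\in S$ and using $\sum_{i\in S}\sigma_i^2\ge1-a$, $\sum_{i\in S}\beta_i\le\gamma$ and $\tfrac16\gamma\abs t^3\le\tfrac a6t^2$ on this range, one gets $\abs{f_n(t)}\le\exp(-(\tfrac12-\tfrac{2a}{3})t^2)\le e^{-t^2/4}$ for $a$ small; integrating $e^{-t^2/4}\abs t^{-1}$ over $\abs t\ge\gamma^{-1/3}$ is again negligible compared with $\gamma$. Collecting the pieces yields $\sup_x\abs{F_n(x)-\Phi(x)}\le C\gamma\le CAn/B_n^{3/2}$.

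The one genuinely delicate point is the far range $\gamma^{-1/3}\le\abs t\le a/\gamma$: individual variances need not be uniformly small, so some $\phi_i$ lie outside the regime where the Taylor expansion is useful and (for lattice-type $\xi_i$) do not decay at all, whence $\abs{f_n(t)}$ cannot be controlled factor by factor. What rescues the argument is that $\sigma_i^3\le\beta_i$ makes every index of not-too-small variance consume a definite share of the third-moment budget $\gamma$, so those indices can be discarded at the cost of only an $O(a)$ fraction of the total variance; calibrating the threshold $a^{-2}\gamma^2$, the cutoff $\gamma^{-1/3}$ and the constant $a$ against one another is the main bookkeeping burden (alternatively one may truncate the $X_i$ at scale $\sqrt{B_n}$, at the price of tracking the induced shifts of means). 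Finally, this estimate is exactly the Lyapunov form of the Berry--Esseen theorem for independent summands, so one could instead invoke its classical statement; but since an explicit universal constant is wanted, carrying out the above argument directly is the cleanest self-contained route.
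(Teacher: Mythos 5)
Your proof is correct, but the paper does not actually prove this theorem: it simply cites it as a special case of the Lyapunov-form Berry--Esseen theorem for independent (not necessarily identically distributed) summands, referring to \cite[Ch.~V, Thm.~3]{Petrov-1995}. What you have written out is, in effect, the standard Fourier-analytic proof of that cited theorem --- Esseen's smoothing inequality, a Taylor expansion of $\ln\phi_i$ on the ``Gaussian range'' $|t|\leq\gamma^{-1/3}$, and a product estimate on the ``far range'' --- and you correctly identify and resolve the one genuinely delicate point, namely that in the far range the individual variances $\sigma_i^2$ need not be small, so $|f_n(t)|$ cannot be controlled factor by factor; discarding the indices with $\sigma_i^2>a^{-2}\gamma^2$ while noting that $\sigma_i^3\leq\beta_i$ makes them consume an $O(a)$ share of the variance budget is exactly the right fix. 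So the mathematics is the same as what underlies the paper's one-line citation; the difference is one of presentation, with your version being self-contained and making the universal constant traceable, at the cost of several pages of bookkeeping that the paper outsources to a reference. You even acknowledge this trade-off in your final paragraph, which is a fair assessment.
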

\begin{proof}
	This is a particular case of~\cite[Chapter V, Theorem 3]{Petrov-1995}.
\end{proof}

\subsection{Small ball probabilities}
\label{subsec:traj_est_walks:small_ball_4moments}

The results in the section are all relatively direct consequences of Doob's submartingale inequality and of the inhomogeneous CLT (Theorem~\ref{thm:inhomog_CLT}).

\begin{lemma}
	\label{lem:macro_small_ball_walk}
	Let \(A>0\), \(n\geq 1\) and let \(X_1,\dots, X_n\) be an independent sequence of centred random variables with \(\max_{i=1,\dots,n} E(X_i^2)\leq A\). Then, for any \(\lambda> 0\)
	\begin{equation*}
		P_0\bigl(\max_{i=1,\dots,n} |S_i| \leq \lambda \bigr) \geq 1- \frac{A n}{\lambda^2}.
	\end{equation*}
\end{lemma}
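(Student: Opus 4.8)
The plan is to bound the complementary probability $P_0\bigl(\max_{i=1,\dots,n}|S_i|>\lambda\bigr)$ using a maximal inequality. Since $X_1,\dots,X_n$ is an independent sequence of centred random variables, the partial sums $(S_i)_{i=0}^n$ (started at $S_0=0$) form a martingale for the canonical filtration; this is one of the standard facts recalled in Section~\ref{subsection:mart_submart_FKG}. The function $x\mapsto x^2$ is convex, so $(S_i^2)_{i=0}^n$ is a submartingale.

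The key step is then to apply Doob's submartingale (maximal) inequality to $(S_i^2)$: for any $\lambda>0$,
\[
	P_0\bigl(\max_{i=1,\dots,n} |S_i| > \lambda \bigr)
	=
	P_0\bigl(\max_{i=1,\dots,n} S_i^2 > \lambda^2 \bigr)
	\leq
	\frac{E_0(S_n^2)}{\lambda^2}.
\]
Now $E_0(S_n^2) = \Var(S_{1,n}) = \sum_{i=1}^n \Var(X_i) = B_n$ because the increments are independent and centred, and by hypothesis $\Var(X_i)=E(X_i^2)\leq A$ for each $i$, so $B_n \leq An$. Combining these gives $P_0\bigl(\max_{i=1,\dots,n}|S_i|>\lambda\bigr)\leq An/\lambda^2$, and taking complements yields the claimed lower bound $P_0\bigl(\max_{i=1,\dots,n}|S_i|\leq\lambda\bigr)\geq 1-An/\lambda^2$. (One can equivalently invoke the classical Kolmogorov maximal inequality directly, which states exactly $P_0(\max_i|S_i|\geq\lambda)\leq \Var(S_n)/\lambda^2$ for independent centred increments; Doob applied to the submartingale $S_i^2$ is the route that gives it.)

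I do not expect any genuine obstacle here: the statement is essentially a one-line consequence of Kolmogorov's inequality together with additivity of the variance under independence, and the bound is trivially true (and vacuous) when $An\geq\lambda^2$, so no case analysis or regularity of the increments beyond the second-moment bound is needed. The only point requiring minimal care is noting that $x\mapsto x^2$ is convex so that $(S_i^2)$ is indeed a submartingale, and that the inequality is stated with a non-strict event $\{\max|S_i|\leq\lambda\}$, which is handled by passing to the complement of the strict event and using monotonicity of probability.
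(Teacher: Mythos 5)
Your proof is correct and uses the same idea as the paper, which simply states that the lemma follows directly from Doob's submartingale inequality; you have merely spelled out the (correct) details: $(S_i^2)$ is a submartingale, Doob/Kolmogorov gives $P_0(\max_i|S_i|>\lambda)\leq E_0(S_n^2)/\lambda^2 = B_n/\lambda^2 \leq An/\lambda^2$, and one passes to the complement.
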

\begin{proof}
	This follows directly from Doob's submartingale inequality.
\end{proof}

\begin{lemma}
	\label{lem:micro_small_ball_walk_LB}
	Let \(A>0\), \(\sigma >0\). There are \(c>0\), \(\lambda_0\geq 0\), \(n_0\geq 1\), such that for any \(n\geq n_0\) and \(X_1,\dots, X_n\) independent sequence of random variables with
	\begin{equation*}
		E(X_i) = 0,
		\quad
		E(|X_i|^3)\leq A,
		\quad
		E(X_i^2) \geq \sigma^2,
	\end{equation*}
	one has the following. For any \(\lambda\geq \lambda_0\),
	\begin{equation*}
		P_0\bigl(\max_{i=1,\dots,n} |S_i| \leq \lambda,\ |S_n|\leq \lambda/2 \bigr) \geq \exp(-\tfrac{c n}{\lambda^2}).
	\end{equation*}
\end{lemma}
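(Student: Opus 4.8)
The plan is to obtain the lower bound by a "chaining across blocks" argument: split the time interval $\{1,\dots,n\}$ into roughly $K \asymp n/\lambda^2$ consecutive blocks, each of length $\ell \asymp \lambda^2$, and demand that at the end of each block the walk is near $0$ (say within $\lambda/4$), while the walk never exits $[-\lambda,\lambda]$ within any block. If each block independently has probability bounded below by some $q>0$ of achieving both "return near $0$" and "no exit", then by independence of the increments across blocks the whole event has probability at least $q^K = e^{-K\log(1/q)} \geq e^{-cn/\lambda^2}$, which is exactly the claimed bound. So the whole lemma reduces to a single-block estimate that is uniform over the admissible increment sequences.

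The key steps, in order. \textbf{Step 1 (block decomposition).} Fix $\ell = \lceil c_1\lambda^2\rceil$ for a constant $c_1$ to be chosen, let $K = \lceil n/\ell\rceil$, and define block endpoints $0 = n_0 < n_1 < \dots < n_K = n$ with $n_j - n_{j-1} \approx \ell$ (the last block may be shorter, which only helps). By the Markov property and independence,
\[
  P_0\bigl(\max_{i\le n}|S_i|\le\lambda,\ |S_n|\le\lambda/2\bigr)
  \ge
  \prod_{j=1}^{K} \inf_{|w|\le \lambda/4}
  P_w\bigl(\max_{i\in(n_{j-1},n_j]}|S_i|\le\lambda,\ |S_{n_j}|\le\lambda/4\bigr),
\]
where the infimum is over the possible values $w$ of $S_{n_{j-1}}$ and we use that starting within $\lambda/4$ and staying within $\lambda$ of that start keeps $|S_i|\le\lambda+\lambda/4$; one adjusts the block constraint to $\max|S_i - S_{n_{j-1}}| \le \tfrac34\lambda$ so that the genuine bound $|S_i|\le\lambda$ holds. \textbf{Step 2 (single-block lower bound).} It suffices to show there is $q>0$, depending only on $A,\sigma$ (via the choice of $c_1$), such that for every block of length $m$ with $\sigma^2 m/C \le \lambda^2 \le C\sigma^2 m$ and increments satisfying the hypotheses, and every $|w|\le\lambda/4$,
\[
  P_w\bigl(\max_{i\le m}|S_i - w|\le \tfrac34\lambda,\ |S_m|\le \tfrac{\lambda}{4}\bigr)\ge q.
\]
Write this probability as $P_0(|S_m + w|\le\lambda/4) - P_0(|S_m+w|\le\lambda/4,\ \max_{i\le m}|S_i|>\tfrac34\lambda)$. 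The first term is controlled below by the inhomogeneous CLT (Theorem~\ref{thm:inhomog_CLT}): since $B_m = \sum\Var(X_i)\in[\sigma^2 m, A^{2/3}m]$ (using $\Var\le E|X|^3{}^{2/3}$... more simply $\Var(X_i)\le E(X_i^2)\le E(|X_i|^3)^{2/3}\le A^{2/3}$) and $\lambda^2\asymp m$, the interval $[-\lambda/4 - w, \lambda/4 - w]/\sqrt{B_m}$ contains a fixed-length sub-interval of $\mathbb R$ of positive Gaussian measure, and the Berry–Esseen error $CAm/B_m^{3/2}\le CA/(\sigma^3\sqrt m)$ is small once $m$ (hence $\lambda$, hence $n$) is large — this is where $\lambda_0$ and $n_0$ enter. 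The second (bad) term is bounded by the free-endpoint Lemma~\ref{lem:macro_small_ball_walk} applied with threshold $\tfrac34\lambda$: $P_0(\max_{i\le m}|S_i|>\tfrac34\lambda)\le \tfrac{16 A m}{9\lambda^2}\le \tfrac{16 A C c_1}{9}$, wait — one needs this $< $ (first term), so one must choose $c_1$ \emph{small}, making $\lambda^2 = c_1\lambda^2/c_1$... rather: choose $c_1$ small so that $m\le \lambda^2/(2 \cdot \text{(const)})$, forcing $\tfrac{16Am}{9\lambda^2}\le \tfrac12\cdot(\text{first term lower bound})$, and simultaneously keep $m$ large by taking $\lambda\ge\lambda_0$ with $\lambda_0$ large. \textbf{Step 3 (assembling).} With $q$ fixed, $K\le n/\ell + 1 \le n/(c_1\lambda^2) + 1 \le 2n/(c_1\lambda^2)$ for $\lambda\le\sqrt n$, hence the product is $\ge q^{K}\ge \exp(-\tfrac{2\log(1/q)}{c_1}\cdot\tfrac{n}{\lambda^2}) = \exp(-cn/\lambda^2)$, with $c = c(A,\sigma)$. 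The condition $\lambda\le\sqrt n$ is not needed for the lower bound in fact, but keeping $\lambda\ge\lambda_0$ and $n\ge n_0$ (so that there is at least one full block and the CLT error in Step 2 is controlled) is essential.

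\textbf{Main obstacle.} The delicate point is the tension in the choice of the block length constant $c_1$ in Step 2: the Doob/CLT bad-event bound $16Am/(9\lambda^2)$ forces $m$ to be a \emph{small} multiple of $\lambda^2$ (so $c_1$ small), while the Berry–Esseen error term $CA/(\sigma^3\sqrt m)$ and the need for a macroscopically large "window" $\lambda/\sqrt{B_m}\asymp 1/\sqrt{c_1}$ in units of the standard deviation both want $m = c_1\lambda^2$ to be \emph{large} in absolute terms. These are reconciled by first fixing $c_1$ small (depending only on $A,\sigma$) to beat the Doob term and make the Gaussian window of definite size, and only \emph{then} choosing $\lambda_0$ (hence $n_0$) large enough, as a function of $A,\sigma,c_1$, to make the Berry–Esseen error negligible; one must be careful that this ordering of quantifiers is consistent, i.e.\ that $c_1$ does not secretly depend on $\lambda_0$. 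A secondary technical nuisance is the handling of the last, possibly short, block and the bookkeeping that translates "stay within $\tfrac34\lambda$ of the block's start" into the genuine constraint $|S_i|\le\lambda$; this is routine once the block endpoints are pinned within $\lambda/4$ of the origin.
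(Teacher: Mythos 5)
Your proposal is correct and takes essentially the same approach as the paper: partition $\{1,\dots,n\}$ into $O(n/\lambda^2)$ blocks of length $\asymp\epsilon\lambda^2$ with $\epsilon$ small, pin the walk near the origin at each block endpoint via the inhomogeneous CLT (Theorem~\ref{thm:inhomog_CLT}), control the "never exceeds $\lambda$ within a block" event via Doob (Lemma~\ref{lem:macro_small_ball_walk}), combine by $P(A\cap B)\ge P(A)+P(B)-1$, and multiply over blocks using the Markov property. The quantifier tension you flag — choose the block constant $\epsilon$ (your $c_1$) small first to beat the Doob term, then take $\lambda_0$ large to control the Berry–Esseen error — is exactly how the paper resolves it.
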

\begin{figure}
	\centering
	\includegraphics[width=\textwidth]{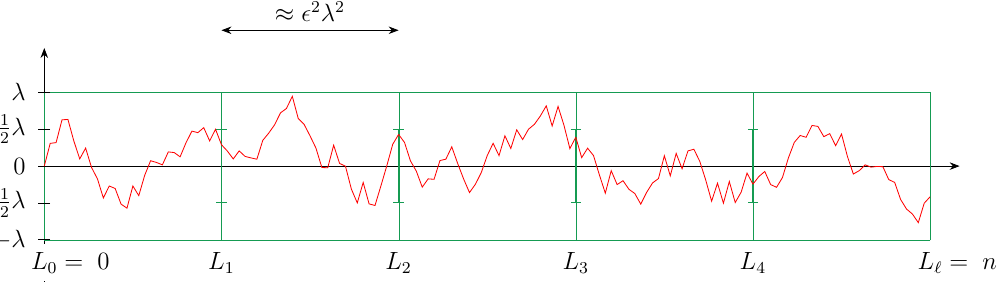}
	\caption{Construction in the proof of Lemma~\ref{lem:micro_small_ball_walk_LB}: the path is forced to pass through the interval \([-\lambda/2,\lambda/2]\) at each time \(L_1,\dots,L_{\ell-1}\).}
	\label{fig:micro_small_ball_walk_LB}
\end{figure}
\begin{proof}
	Let \(\epsilon>0\) to be fixed later. Let \(\ell = \lfloor n (\epsilon\lambda)^{-2} \rfloor\). Let \(0=L_0<L_1<\dots <L_{\ell} = n\) be integers such that
	\begin{equation*}
		(\epsilon\lambda)^2 \geq L_{i}-L_{i-1} \leq 2(\epsilon\lambda)^2,\ i=1,\dots, \ell.
	\end{equation*}
	Note that this requires \(\lambda_0, n_0\) large enough as a function of \(\epsilon\). Now, introduce (see Figure~\ref{fig:micro_small_ball_walk_LB})
	\begin{equation*}
		D_i = \{S_{L_i}\in [-\lambda/2, \lambda/2]\}\cap \{\max_{i=L_{i-1}+1,\dots , L_i} |S_i|\leq \lambda\}.
	\end{equation*}By Lemma~\ref{lem:macro_small_ball_walk}, one has that for any \(i=1,\dots, \ell\), and \(x\in [-\lambda/2, \lambda/2]\),
	\begin{multline*}
		P\bigl(\{\max_{i=L_{i-1}+1,\dots , L_i} |S_i|\leq \lambda\} \bgiven S_{L_{i-1}} = x \bigr)
		\geq
		P\bigl(\{\max_{i=L_{i-1}+1,\dots , L_i} |S_i|\leq \lambda/2\} \bgiven S_{L_{i-1}} = 0 \bigr)
		\\
		\geq
		1-\frac{4A^{2/3} (L_i-L_{i-1})}{\lambda^2}
		\geq
		1- 8 A^{2/3} \epsilon^2.
	\end{multline*}
	Also, by the CLT (Theorem~\ref{thm:inhomog_CLT}), one has
	\begin{equation*}
		P\bigl(S_{L_i}\in [-\lambda/2, \lambda/2] \bgiven S_{L_{i-1}} = x \bigr)
		\geq
		\tfrac{1}{4}
	\end{equation*}
	as soon as \(\epsilon\) is small enough, and \(\lambda_0\) is large enough as a function of \(A,\sigma\). Combining the two, we get
	\begin{equation*}
		P\bigl(D_i \bgiven S_{L_{i-1}} = x \bigr)
		\geq
		\tfrac{1}{8}
	\end{equation*}
	as soon as \(\epsilon\) is small enough, and \(\lambda_0\) is large enough. A direct induction and Markov's property yields
	\begin{equation*}
		P_0\bigl(\max_{i=1,\dots,n} |S_i| \leq \lambda,\ |S_n|\leq \lambda/2 \bigr)
		\geq
		P_0\bigl(\cap_{i=1}^{\ell} D_i\bigr)
		\geq
		8^{-\ell},
	\end{equation*}
	which gives the wanted claim.
\end{proof}

\begin{lemma}
	\label{lem:micro_small_ball_walk_UB}
	Let \(A>0\), \(\sigma >0\). There are \(c>0\), \(\lambda_0\geq 0\), \(n_0\geq 1\), such that for any \(n\geq n_0\) and \(X_1,\dots, X_n\) independent sequence of random variables with
	\begin{equation*}
		E(X_i) = 0,
		\quad
		E(|X_i|^3)\leq A,
		\quad
		E(X_i^2) \geq \sigma^2,
	\end{equation*}
	one has the following. For any \(\sqrt{n} \geq \lambda\geq \lambda_0\),
	\begin{equation*}
		P_0\bigl(\max_{i=1,\dots,n} |S_i| \leq \lambda \bigr) \leq \exp(-\tfrac{c n}{\lambda^2}).
	\end{equation*}
\end{lemma}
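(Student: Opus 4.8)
The plan is to mirror the block decomposition used in the proof of the matching lower bound (Lemma~\ref{lem:micro_small_ball_walk_LB}), but now show that each block contributes a factor strictly less than $1$. Fix $\epsilon>0$ to be chosen later and set $\ell = \lfloor n(\epsilon\lambda)^{-2}\rfloor$; since $\lambda\leq\sqrt n$ we have $\ell\geq 1$ once $\epsilon$ is small and $n_0$ is large. Choose integers $0=L_0<L_1<\dots<L_\ell=n$ with $(\epsilon\lambda)^2\leq L_i-L_{i-1}\leq 2(\epsilon\lambda)^2$ for each $i$; this is possible provided $\lambda_0,n_0$ are large as a function of $\epsilon$. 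The event $\{\max_{i\leq n}|S_i|\leq\lambda\}$ is contained in $\bigcap_{i=1}^{\ell}\{|S_{L_i}|\leq\lambda\}$, so by Markov's property it suffices to show that, uniformly over $x\in[-\lambda,\lambda]$,
\[
	P\bigl(|S_{L_i}|\leq\lambda \bgiven S_{L_{i-1}}=x\bigr)\leq 1-\eta
\]
for some fixed $\eta=\eta(A,\sigma)>0$, once $\epsilon$ is small and $\lambda_0$ large.

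The key step is this single-block estimate. Conditionally on $S_{L_{i-1}}=x$, the increment $S_{L_i}-S_{L_{i-1}}=\sum_{j=L_{i-1}+1}^{L_i}X_j$ is a sum of $N_i:=L_i-L_{i-1}$ independent centred variables with variance $B_{L_i}-B_{L_{i-1}}\in[\sigma^2 N_i,\,A^{2/3}N_i]$ (using $E(X_j^2)\leq E(|X_j|^3)^{2/3}\leq A^{2/3}$) and third absolute moments bounded by $A$. By the inhomogeneous CLT (Theorem~\ref{thm:inhomog_CLT}), the law of $(S_{L_i}-x)/\sqrt{B_{L_i}-B_{L_{i-1}}}$ is within $C A N_i / (B_{L_i}-B_{L_{i-1}})^{3/2}\leq C A/(\sigma^3\sqrt{N_i})$ in Kolmogorov distance of a standard Gaussian; since $N_i\geq(\epsilon\lambda)^2\geq(\epsilon\lambda_0)^2$, this error is smaller than $1/10$ once $\lambda_0$ is large enough in terms of $A,\sigma,\epsilon$. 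The conditional event $|S_{L_i}|\leq\lambda$ forces $S_{L_i}-x$ into an interval of length $2\lambda$; since $\sqrt{B_{L_i}-B_{L_{i-1}}}\geq\sigma\sqrt{N_i}\geq\sigma\epsilon\lambda$, this interval has Gaussian-scale length at most $2/(\sigma\epsilon)$, so a standard Gaussian assigns it probability at most $2/(\sigma\epsilon)\cdot(2\pi)^{-1/2}$. Choosing $\epsilon$ small — wait, that makes the bound worse — instead note that the Gaussian probability of \emph{any} interval of fixed length $L$ is bounded away from $1$: $P(\calN(0,1)\in[y,y+L])\leq 1-c(L)$ with $c(L)>0$, and here $L=2/(\sigma\epsilon)$ is a fixed constant once $\epsilon$ is fixed. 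So we first fix $\epsilon$ (any convenient value, say $\epsilon=1$, or small enough only for the block-count bound $\ell\geq1$ and the interval construction), obtain $\eta:=c(2/(\sigma\epsilon))-1/10>0$ provided that quantity is positive — which it is, since $c(L)$ can be taken close to $1$ only issue being $L$ might be large; in that case shrink nothing but rather observe $c(L)\geq c\,L^{-1}$ for large $L$ is still positive, so $\eta>0$ after possibly enlarging $\lambda_0$ to push the CLT error below $\tfrac12 c(L)$.

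Given the single-block bound $P(D_i\text{-type event})\leq 1-\eta$, a direct induction via Markov's property yields
\[
	P_0\bigl(\max_{i=1,\dots,n}|S_i|\leq\lambda\bigr)\leq(1-\eta)^{\ell}\leq(1-\eta)^{\,n(\epsilon\lambda)^{-2}-1}\leq e^{-cn/\lambda^2}
\]
for a suitable $c=c(A,\sigma)>0$ and all $n\geq n_0$, using $\ell\geq n(\epsilon\lambda)^{-2}-1$ and absorbing the $-1$ into the constants (legitimate because $\lambda\leq\sqrt n$ forces $n/\lambda^2\geq1$). The main obstacle is the uniform single-block anti-concentration bound: one must ensure the CLT error term $CA/(\sigma^3\sqrt{N_i})$ is dominated by the gap $1-P(\calN(0,1)\in I)$ uniformly over the $\lambda$-range and over the block index $i$, which is why $\lambda_0$ must be taken large after $\epsilon$ is fixed rather than before. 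Everything else is the same block-decomposition bookkeeping as in Lemma~\ref{lem:micro_small_ball_walk_LB}.
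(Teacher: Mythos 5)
Your proposal is correct and follows the paper's proof essentially exactly: the same block decomposition into $\Theta(n/\lambda^2)$ blocks of length $\Theta(\lambda^2)$, and the same CLT-based anti-concentration bound showing that each block's conditional probability of landing back in $[-\lambda,\lambda]$ is at most $1-\eta$ uniformly; the only cosmetic difference is that you carry a free parameter $\epsilon$ in the block size that the paper fixes to order $1$ from the start, which is why you hit and then correctly resolve the ``$\epsilon$ small makes the bound worse'' detour. One inessential slip worth fixing: the parenthetical claim that $c(L)\geq c L^{-1}$ for large $L$ is false (the gap $1-\sup_y P(\calN(0,1)\in[y,y+L])$ actually decays like $e^{-L^2/8}$), but you never need a rate --- only $c(L)>0$ for the single fixed value of $L$ produced by your fixed $\epsilon$, which of course holds.
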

\begin{figure}
	\centering
	\includegraphics{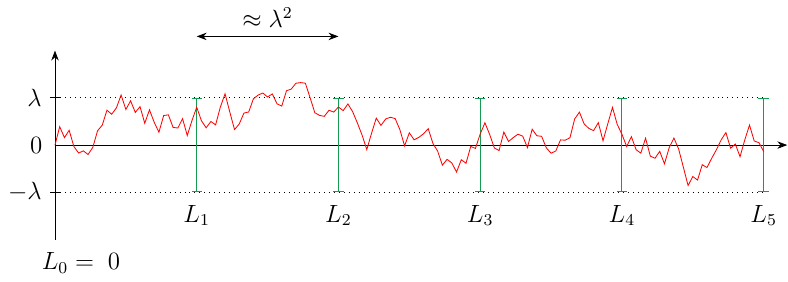}
	\caption{Construction in the proof of Lemma~\ref{lem:micro_small_ball_walk_UB}: the path is forced to remain in the interval \([-\lambda,\lambda]\) and has to pass through the interval \([-\lambda,\lambda]\) at each time \(L_1,\dots,L_{\ell}\), but is unconstrained otherwise.}
	\label{fig:micro_small_ball_walk_UB}
\end{figure}
\begin{proof}
	Let \(\ell = \lceil n \lambda^{-2} \rceil\). Let \(0=L_0<L_1<\dots <L_{\ell} = n\) be integers such that
	\begin{equation*}
		\tfrac{1}{2}\lambda^2 \leq L_{i}-L_{i-1} \leq \lambda^2,\ i=1,\dots, \ell.
	\end{equation*}
	Now, for any \(|x|\leq \lambda\), and any \(i\in \{1,\dots,\ell\}\),
	\begin{equation*}
		P\bigl(|S_{L_i}| \leq \lambda \bgiven S_{L_{i-1}} = x\bigr)
		\leq
		e^{-c}
	\end{equation*}
	for some \(c>0\) depending on \(A,\sigma\) by the CLT (Theorem~\ref{thm:inhomog_CLT}). Then, a direct induction yields
	\begin{equation*}
		P_0\bigl(\max_{i=1,\dots,n} |S_i| \leq \lambda \bigr)
		\leq
		P_0\bigl(\cap_{i=1}^{\ell} \{|S_{L_i}| \leq \lambda\} \bigr)
		\leq
		e^{-c\ell},
	\end{equation*}
	which is the claim.
\end{proof}

\subsection{Positivity probabilities}
\label{subsec:traj_est_walks:positivity_4moments}

This section builds on the proof presented in Appendix~\ref{app:bounded_increm_pos_proba}. We therefore encourage the reader to first look at Appendix~\ref{app:bounded_increm_pos_proba} before reading the present section.

\subsubsection*{Lower bound}

We start by a preliminary ``truncation Lemma''.
\begin{lemma}
	\label{lem:truncated_RV}
	Let \(\alpha>1\). Let \(A>0\). Let \(X\) be a random variable such that \(E(X) = 0\) and \(E(|X|^{\alpha})\leq A\). Then, for any \(K\geq 1\), one can construct a random variable \(Y\) and a coupling \(Q\) of \(X,Y\) such that
	\begin{equation*}
		E(Y) = 0,
		\quad
		P(|Y|\leq (A+1)K) = 1,
		\quad
		Q(X\neq Y)\leq \frac{A+1}{K^{\alpha}}.
	\end{equation*}
	Moreover, for \(p\geq 1\),
	\begin{gather*}
		E(Y^p) \leq 2^{p-1} \bigl(E(|X|^p) + K^{p-\alpha}A^p\bigr),
		\\
		E(Y^2) \geq E(X^2) - AK^{2-\alpha} - 2A^2 K^{2-2\alpha} \qquad \text{ if } \alpha > 2.
	\end{gather*}
\end{lemma}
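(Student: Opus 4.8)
The plan is to obtain $Y$ as a truncation of $X$ repaired by a small, \emph{independent} centering correction, so that the modification needed to re-center $Y$ touches only an event of probability $O(K^{-\alpha})$. First I would set $\tilde X \coloneqq X\mathds{1}_{|X|\le K}$ and $\delta \coloneqq E(\tilde X)$. Since $E(X)=0$, we have $\delta = -E(X\mathds{1}_{|X|>K})$, and on $\{|X|>K\}$ one has $|X| = |X|^{\alpha}|X|^{1-\alpha}\le K^{1-\alpha}|X|^{\alpha}$ because $\alpha>1$; hence $|\delta|\le K^{1-\alpha}E(|X|^{\alpha})\le AK^{1-\alpha}$, and in particular $|\delta|/(AK)\le K^{-\alpha}\le 1$. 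I also record that $P(|X|>K)\le AK^{-\alpha}$, by Markov's inequality applied to $|X|^{\alpha}$.

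The point that needs care is that one should \emph{not} take $Y=\tilde X-\delta$: subtracting the bias globally would make $Y\neq X$ on essentially the whole space and destroy the coupling bound. Instead I would introduce a random variable $B$, independent of $X$, equal to $-\operatorname{sign}(\delta)\,AK$ with probability $|\delta|/(AK)$ and to $0$ otherwise (so $B\equiv 0$ when $\delta=0$); then $E(B)=-\delta$ and $P(B\neq 0)\le K^{-\alpha}$. Define $Y\coloneqq \tilde X+B$, and let $Q$ be the joint law of $(X,Y)$.

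With this definition the three displayed properties are immediate. By independence $E(Y)=E(\tilde X)+E(B)=\delta-\delta=0$; since $|\tilde X|\le K$ and $|B|\le AK$, surely $|Y|\le(A+1)K$; and $Y=X$ on $\{|X|\le K\}\cap\{B=0\}$, so $Q(X\neq Y)\le P(|X|>K)+P(B\neq 0)\le AK^{-\alpha}+K^{-\alpha}=(A+1)K^{-\alpha}$. For the moment bounds, use $|a+b|^{p}\le 2^{p-1}(|a|^{p}+|b|^{p})$ with $|\tilde X|\le|X|$ and $|B|^{p}\le(AK)^{p}\mathds{1}_{B\neq 0}$ to get
\[
  E(|Y|^{p})\le 2^{p-1}\bigl(E(|X|^{p})+(AK)^{p}P(B\neq 0)\bigr)\le 2^{p-1}\bigl(E(|X|^{p})+A^{p}K^{p-\alpha}\bigr).
\]
For the lower bound on $E(Y^{2})$ when $\alpha>2$, expand $E(Y^{2})=E(\tilde X^{2})+2E(\tilde X)E(B)+E(B^{2})\ge E(\tilde X^{2})-2\delta^{2}$ (independence), bound $E(\tilde X^{2})=E(X^{2})-E(X^{2}\mathds{1}_{|X|>K})\ge E(X^{2})-K^{2-\alpha}E(|X|^{\alpha})\ge E(X^{2})-AK^{2-\alpha}$ — here $\alpha>2$ is precisely what gives $|X|^{2-\alpha}\le K^{2-\alpha}$ on $\{|X|>K\}$ — and use $2\delta^{2}\le 2A^{2}K^{2-2\alpha}$.

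I do not expect a genuine obstacle here: the only conceptual step is realizing that the centering correction must be localized to a low-probability event (via the auxiliary variable $B$) rather than implemented as a translation of the whole law of $\tilde X$; after that everything reduces to the elementary convexity inequality $|a+b|^p\le 2^{p-1}(|a|^p+|b|^p)$ and the truncation tail estimates above.
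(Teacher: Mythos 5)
Your proof is correct and follows essentially the same route as the paper: truncate $X$ at level $K$ and restore centering via an independent, small-probability correction variable added to $X\mathds{1}_{|X|\le K}$. The paper's correction is $x\xi$ with $\xi\sim\mathrm{Bernoulli}(K^{-\alpha})$ and $x = K^{\alpha}E(X\mathds{1}_{|X|>K})$ (with $|x|\le AK$ obtained via H\"older and Chebyshev), whereas you fix the jump at $AK$ and scale the probability by $|\delta|/(AK)$ using the direct bound $|\delta|\le AK^{1-\alpha}$, but the mechanism, the coupling, and all subsequent moment estimates are the same.
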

\begin{proof}
	Let \(X\) be as in the statement. Let \(K>0\). Let then \(\xi\) be a Bernoulli random variable of parameter \(K^{-\alpha}\) defined on the same space as \(X\) and independent from \(X\). Set \(x = K^{\alpha}E(X\mathds{1}_{|X|> K})\) and define
	\begin{equation*}
		Y = X \mathds{1}_{|X|\leq K} + x \xi.
	\end{equation*}
	It is centred:
	\begin{equation*}
		E(Y)
		=
		E(X\mathds{1}_{|X|\leq K}) + x K^{-\alpha}
		=
		-E(X\mathds{1}_{|X|> K}) + x K^{-\alpha}
		=
		0.
	\end{equation*}
	The variable \(X\mathds{1}_{|X|\leq K}\) is taking values in \([-K,K]\), so we need to check that \(|x|\) is not too large. By Hölder's and Chebychev's inequalities, \(x\) satisfies
	\begin{equation*}
		|x|
		\leq
		K^{\alpha}E(|X|^{\alpha})^{1/\alpha}P(|X|> K)^{(\alpha-1)/\alpha}
		\leq
		K^{\alpha}E(|X|^{\alpha})^{1/\alpha}(E(|X|^{\alpha}) K^{-\alpha} )^{(\alpha-1)/\alpha}
		\leq
		KA,
	\end{equation*}
	which is the second property of the wanted random variable. Then,
	\begin{equation*}
		P(X\neq Y)
		\leq
		P\bigl(\{\xi = 1\}\cup\{|X|>K\}\bigr)
		\leq
		K^{-\alpha} + P(|X|>K)
		\leq
		K^{-\alpha} \bigl( 1+ A \bigr),
	\end{equation*}
	which is the third property. Now, for \(p >1\), Jensen inequality gives
	\begin{equation*}
		E(|Y|^p)
		\leq
		2^{p-1}\bigl(E(|X|^p\mathds{1}_{|X|\leq K}) + |x|^pE(\xi)\bigr)
		\leq
		2^{p-1} \bigl(E(|X|^p) + K^{p-\alpha}A^p\bigr).
	\end{equation*}
	Finally, for \(\alpha>2\),
	\begin{multline*}
		E(Y^2)
		=
		E(X^2) -E(X^2\mathds{1}_{|X|> K}) + x^2K^{-\alpha} - 2x^2 K^{-2\alpha}
		\\
		\geq
		E(X^2) - AK^{2-\alpha} - 2A^2 K^{2-2\alpha}.
		\qedhere
	\end{multline*}
\end{proof}

\begin{lemma}
	\label{lem:positivity_general_walks:LB}
	Let \(A>0, A'>0, s>0\). Then, there is \(c>0\) such that for any \(n\geq 1\), \(X_1,\dots, X_n\) independent sequence of real random variables with
	\begin{equation*}
		E(X_i) = 0,
		\quad
		E(X_i^4)\leq A,
		\quad
		E(X_i\mathds{1}_{X_i>0})\geq s,
		\ i=1,\dots, n,
	\end{equation*}
	and any \(0\leq u\leq A'\sqrt{n}\),
	\begin{equation*}
		P_u( \tau >n )
		\geq
		\frac{c(u+1)}{\sqrt{n}}.
	\end{equation*}
\end{lemma}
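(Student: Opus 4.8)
The plan is to reduce the general fourth-moment case to the bounded-increment case handled in Appendix~\ref{app:bounded_increm_pos_proba} via the truncation Lemma~\ref{lem:truncated_RV}, and then to recover the lower bound for $P_u(\tau>n)$ from the bounded-increment estimate by controlling the error introduced by the coupling. Concretely, fix a truncation level $K = K(n)$ to be chosen (a small power of $n$, e.g.\ $K = n^{\beta}$ for a suitable $\beta<1/4$, so that $K^{4}/n\to\infty$ but $K=o(\sqrt n)$). Apply Lemma~\ref{lem:truncated_RV} with $\alpha=4$ to each $X_i$ to obtain centred, $[-(A+1)K,(A+1)K]$-valued increments $Y_i$ coupled to $X_i$ with $Q(X_i\neq Y_i)\le (A+1)K^{-4}$; by the moment estimates in that lemma, the $Y_i$ still have uniformly bounded fourth moment and (using $E(X_i^2)\ge 4s^2$ from Lemma~\ref{lem:equivalent_second_moment_cond} and the lower bound $E(Y_i^2)\ge E(X_i^2)-AK^{-2}-2A^2K^{-6}$) a uniformly positive variance once $K\ge K_0(A,s)$. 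Hence the walk $S^Y_n = u + \sum_{i\le n} Y_i$ falls under the hypotheses of the bounded-increment positivity lemma in the appendix, which yields $P_u(\tau^Y>n)\ge c'(u+1)/\sqrt n$ for all $0\le u\le A'\sqrt n$, with $c'$ depending only on $A,A',s$ (the dependence on $K$ being absorbed since $K$ is a fixed function of the parameters and $n$, and the appendix bound is uniform in $n$ — here one must check the appendix statement is genuinely $n$-uniform for bounded increments, which it is).

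The second step is to transfer this to $P_u(\tau>n)$. Write $\{\tau^Y>n\} = \{\min_{i\le n} S^Y_i\ge 0\}$ and similarly for $S$. On the event that $X_j=Y_j$ for all $j\le n$, the two walks coincide, so
\[
	P_u(\tau>n) \;\ge\; P_u(\tau^Y>n) \;-\; Q\bigl(\exists\, j\le n:\ X_j\neq Y_j\bigr)
	\;\ge\; \frac{c'(u+1)}{\sqrt n} \;-\; \frac{(A+1)\,n}{K^{4}}.
\]
Choosing $\beta$ slightly below $1/4$ makes $n/K^4 = n^{1-4\beta}$ a negative power of $n$, hence $o(1/\sqrt n)$; more carefully, one wants $n/K^4 \le \tfrac12 c'/\sqrt n$, i.e.\ $K^4 \ge 2(A+1)n^{3/2}/c'$, which is compatible with $K=o(\sqrt n)$ since $3/8<1/2$. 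With such a choice, the right-hand side is at least $\tfrac12 c'(u+1)/\sqrt n$ when $u+1\ge 1$ (always true), giving the claim with $c=c'/2$ for $n$ large; small $n$ are handled by adjusting the constant (since $P_u(\tau>n)>0$ trivially for any fixed $n$ as long as the increments can be positive, which the condition $E(X_i\mathds1_{X_i>0})\ge s>0$ guarantees).

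The main obstacle is the first step: ensuring that the bounded-increment result from the appendix applies with constants uniform in $n$, and in particular that the truncated variances stay bounded away from zero uniformly — this is exactly where Lemma~\ref{lem:equivalent_second_moment_cond} (translating the $E(X_i\mathds1_{X_i>0})\ge s$ hypothesis into $E(X_i^2)\ge 4s^2$) and the quantitative lower bound on $E(Y_i^2)$ from Lemma~\ref{lem:truncated_RV} are needed. A secondary subtlety is that one wants the appendix's positivity lower bound to hold for \emph{all} $0\le u\le A'\sqrt n$ rather than only for $u$ of order $1$; I would check that the appendix argument, being based on a reflection/ballot-type estimate for bounded-increment walks together with the CLT (Theorem~\ref{thm:inhomog_CLT}) to control the endpoint, indeed produces the linear-in-$u$ factor uniformly in this range. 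Everything else — the moment bookkeeping and the final subtraction — is routine.
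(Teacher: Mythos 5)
There is a genuine gap in Step~1, and it is precisely why the paper does \emph{not} invoke Theorem~\ref{thm:bounded_increm_pos_proba} as a black box. The constant \(c_-\) in Theorem~\ref{thm:bounded_increm_pos_proba} is not uniform in the truncation level \(K\): tracing its proof, the only input used to bound the overshoot is \(E(S_{k+1,n}^2)\leq K^2(n-k)\), which (after Doob) yields \(E_u(S_n\mid \tau>n)\leq cK\sqrt n\) and hence \(c_-\propto 1/K\); the threshold \(n_0\) is also \(K\)-dependent through the CLT step, where the Berry--Esseen error scales like \(K^3/\sqrt n\). If you let \(K=K(n)\to\infty\) and feed \(c'\propto 1/K\) into your balance \(n/K^4\leq \tfrac12 c'/\sqrt n\), you are actually forced to take \(K^3\gtrsim n^{3/2}\), i.e.\ \(K\gtrsim\sqrt n\) (not \(n^{3/8}\), because \(c'\) is not a genuine constant), and then the black-box conclusion is only \(P_u(\tau^Y>n)\gtrsim (u+1)/n\) --- off by a factor of \(\sqrt n\). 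So ``truncate, cite the appendix, subtract the coupling error'' cannot give the correct \(1/\sqrt n\) rate no matter how \(\beta\) is tuned; your remark that ``the appendix bound is genuinely \(n\)-uniform'' addresses the wrong uniformity (the issue is \(K\), not \(n\)).

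The repair, which is what the paper's proof does, is to keep \(K=\sqrt n\) but re-run the optional-stopping/\(T\)-stopping argument directly for the truncated walk \(Z\), and at the point where the appendix used \(E(S_{k+1,n}^2)\leq K^2 n\), substitute instead the \(K\)-independent bound \(E(Z_{k+1,n}^2)\leq 3\sqrt A\,n\) that Lemma~\ref{lem:truncated_RV} supplies once the original increments have a uniform fourth moment. With this replacement the constant in \(E_u(Z_n\mid\tau'>n)\leq C\sqrt n\) depends only on \(A\), the single-jump contribution \(Y_k\leq (A+1)\sqrt n\) is still \(O(\sqrt n)\) because \(K\leq\sqrt n\), and the coupling error \(n/K^4=1/n=o(1/\sqrt n)\) is negligible. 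You correctly identified all the other ingredients (the truncation via Lemma~\ref{lem:truncated_RV}, the coupling subtraction, Lemma~\ref{lem:equivalent_second_moment_cond} to translate the hypothesis into a variance lower bound, and the small-\(u\)/small-\(n\) endgame), but the key missing observation is that the martingale argument must be re-run with the uniform second-moment bound rather than re-used with the appendix's \(K\)-dependent constants.
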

\begin{proof}
	We follow the same strategy as the lower bound for bounded random variables of Appendix~\ref{app:bounded_increm_pos_proba}. There, the key was that upward increments were bounded, but a direct inspection of the argument shows that the real ingredient was that for \(n\) large enough, upward increments were bounded by \(\sqrt{n}\). For \(i=1,\dots, n\), let \(Y_i\) be the random variable obtained via Lemma~\ref{lem:truncated_RV} with \(K= \sqrt{n}\), \(\alpha = 4\). As the \(X_i\)'s form an independent family, we can assume that the \(Y_i's\) are defined on the same space as the \(X_i's\), and that \(\bigl((X_i,Y_i)\bigr)_{i=1}^{n}\) forms an independent family of random vectors.
	
	Let \(Z_0 = S_0\) and \(Z_{i} = Z_{i-1}+ Y_i\) be the random walk associated to the \(Y_i\)'s. Define
	\begin{equation*}
		\tau' = \min\{k\geq 1:\ Z_k<0\}.
	\end{equation*}
	Let \(D = \cap_{i=1}^{n}\{X_i = Y_i\}\).
	We then have
	\begin{equation*}
		P_u(\tau > n)
		\geq
		P_u(\tau'> n,\ D)
		\geq
		P_u(\tau' > n) - P(D^c).
	\end{equation*}
	Now, \(P(D^c)\leq \frac{(A+1)n}{n^{2}}\) by Lemma~\ref{lem:truncated_RV} and a union bound. We therefore need to lower bound \(P_u(\tau' > n)\). Note that as the \(Y_i\)'s are centred, \(Z\) is a martingale. Thus, by the optional stopping Theorem, for \(u>0\),
	\begin{multline*}
		u
		=
		E_u(Z_{\tau'\wedge n} )
		=
		P_u(\tau' >n)E_u(Z_{n} \given \tau' >n) + E_u(Z_{\tau'}\mathds{1}_{\tau'\leq n})
		\\
		\leq
		P_u(\tau' >n)E_u(Z_{n} \given \tau' >n).
	\end{multline*}
	Now, let \(L\) be some large number to be fixed later and define \(T = \min\{k\geq 0:\ Z_k \geq L\sqrt{n} \}\). One has
	\begin{multline*}
		E_u(Z_{n} \given \tau' >n)
		=
		\mathds{1}_{u\geq L\sqrt{n}}\bigl( u + E_u(Z_{1,n} \given \tau' >n)\bigr)
		+
		\mathds{1}_{u< L\sqrt{n}}\sum_{k=1}^n E_u(\mathds{1}_{T=k} Z_n\given \tau' > n) \\
		+ \mathds{1}_{u< L\sqrt{n}} E_u(\mathds{1}_{T>n} Z_n\given \tau' > n).
	\end{multline*}
	Since \(\mathds{1}_{T>n} Z_n < L\sqrt{n}\), the third term is smaller than \(L\sqrt{n}\).
	Moreover, using Markov's property, we obtain
	\begin{align*}
		E_u(\mathds{1}_{T=k} Z_n\given \tau' > n)
		&=
		E_u\bigl( \mathds{1}_{T=k} \bigl(Z_{k-1} + Y_k + E(Z_{k+1,n} \given Z_k, \tau' >n)\bigr) \bgiven \tau'>n \bigr)
		\\
		&\leq
		E_u\bigl( \mathds{1}_{T=k} \bigl((A+1+L)\sqrt{n} + E(Z_{k+1,n} \given Z_k, \tau' >n)\bigr) \bgiven \tau'>n \bigr).
	\end{align*}
	Now, for any \(v\geq L\sqrt{n}\), \(k\geq 0\),
	\begin{equation*}
		E(Z_{k+1,n} \given Z_k=v, \tau' >n)
		\leq
		\frac{E( Z_{k+1,n}^2)^{1/2}}{P( \min_{i=k+1,\dots, n} Z_i \geq 0 \given Z_k=v)}.
	\end{equation*}
	Moreover, for any \(v\geq L\sqrt{n}\),
	\begin{equation*}
		P( \min_{i=k+1,\dots, n} Z_i \geq 0 \given Z_k=v)
		\geq
		P(\max_{i=k+1,\dots, n} |Z_{k+1,i}| \leq L\sqrt{n})
		\geq
		1- \frac{E(Z_{k+1,n}^2)}{L^2 n}
	\end{equation*}
	by Doob's submartingale inequality. As \(E(Z_{k+1,n}^2) \leq 3n \sqrt{A}\) for \(n\) large enough by Lemma~\ref{lem:truncated_RV}, we obtain that for the choice \(L = 2A^{1/4}\), and \(v\) as before,
	\begin{equation*}
		E(Z_{k+1,n} \given Z_k=v, \tau' >n) \leq 3^{3/2} A^{1/4} \sqrt{n}.
	\end{equation*}
	Using this, we obtain
	\begin{equation*}
		E_u(\mathds{1}_{T=k} Z_n\given \tau' > n)
		\leq
		\sqrt{n}(A + 1 + 2A^{1/4} + 3^{3/2} A^{1/4})P_u(T=k\given \tau' > n),
	\end{equation*}
	and thus
	\begin{equation*}
		E_u(Z_{n} \given \tau' >n)
		\leq
		\mathds{1}_{u\geq L\sqrt{n}}(u + C\sqrt{n})
		+
		\mathds{1}_{u< L\sqrt{n}} C\sqrt{n},
	\end{equation*}
	for some \(C>0\) depending only on \(A\). Using this in the equation obtained via the optional stopping Theorem, we get that for \(u\leq A'\sqrt{n}\),
	\begin{equation*}
		P_u(\tau' >n) \geq \frac{u}{(C+A')\sqrt{n}},
	\end{equation*}
	therefore, for \(n\) large enough as a function of \(A\),
	\begin{equation*}
		P_u(\tau > n)
		\geq
		\frac{u}{(C+A')\sqrt{n}} - \frac{A+1}{n} \geq \frac{u}{2(C+A')\sqrt{n}},
	\end{equation*}
	for any \(A'\sqrt{n}\geq u\geq 1\). The cases \(u\in [0,1)\), and \(n\) small are handled as in proof of Theorem~\ref{thm:bounded_increm_pos_proba} using the lower bound on \(E(X_i \mathrm{1}_{X_i>0})\).
\end{proof}

\subsubsection*{Moments bounds}

\begin{lemma}
	\label{lem:positivity_general_walks:moments}
	Let \(A>0, s>0\). Then, there are \(c,c'>0\) such that for any \(n\geq 1\), \(X_1,\dots, X_n\) independent sequence of real random variables with
	\begin{equation*}
		E(X_i) = 0,
		\quad
		E(X_i^4)\leq A,
		\quad
		E(X_i\mathds{1}_{X_i>0})\geq s,
		\ i=1,\dots, n,
	\end{equation*}
	one has for any \(u\geq 0\), \(k=1,\dots,n\),
	\begin{gather*}
		E_u\bigl(\max_{i=1,\dots, n}S_{i}^2 \bgiven \tau >n \bigr) \leq 12u^2 + c n,
		\\
		E_u\bigl( S_{k} \bgiven \tau >n \bigr) \geq c' (\sqrt{k}+u).
	\end{gather*}
\end{lemma}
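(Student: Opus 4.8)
The plan is to prove the two bounds separately, both by exploiting that, under $P_u(\cdot\,|\,\tau>n)$, the process $(S_k)_{k=0}^n$ is a submartingale (fact 3 of Section~\ref{subsection:mart_submart_FKG}, applied with $h\equiv 0$), together with the second-moment control provided by the truncation machinery of Lemma~\ref{lem:truncated_RV} and Doob's inequality, exactly as in the proof of Lemma~\ref{lem:positivity_general_walks:LB}.

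\emph{The second-moment bound.} First I would reduce to truncated increments: using Lemma~\ref{lem:truncated_RV} with $K=\sqrt n$ and $\alpha=4$, couple $(X_i)$ with $(Y_i)$ so that the associated walk $Z$ is a centred martingale with $E(Z_{k,n}^2)\le 3\sqrt A\,(n-k+1)$ for $n$ large, and with $P(D^c)\le (A+1)/n$ where $D=\cap\{X_i=Y_i\}$. On the event $D$, $\max_i S_i^2=\max_i Z_i^2$ and $\{\tau>n\}=\{\tau'>n\}$, so it suffices to bound $E_u(\max_i Z_i^2\,|\,\tau'>n)$; the contribution of $D^c$ is negligible because on it $\max_i S_i^2\le (\max_i|X_i|+\cdots)^2$ has a crude polynomial bound killed by $P(D^c)$, or one can simply observe $\{\tau>n\}\subset\{\tau'>n\}\cup D^c$ and bound accordingly. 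For the truncated walk, $Z_k^2 - E(Z_{1,k}^2)$ (shifted by the starting point $u$) is again a submartingale under the conditioned law, since squares of martingales are submartingales and conditioning on the increasing event $\{\tau'>n\}$ preserves this (same computation as in Section~\ref{subsection:mart_submart_FKG}, using that $\{Z_{j}\ge 0,\ j>i\}$ is increasing in the later increments). Hence by Doob's $L^2$ maximal inequality applied under $P_u(\cdot\,|\,\tau'>n)$,
\[
	E_u\bigl(\max_{k\le n}Z_k^2\,\bigm|\,\tau'>n\bigr)
	\le 4\, E_u\bigl(Z_n^2\,\bigm|\,\tau'>n\bigr)
	\le 4\bigl(2u^2 + 2E(Z_{1,n}^2)\bigr)
	\le 8u^2 + 24\sqrt A\, n,
\]
where in the middle step I used $Z_n^2\le 2u^2+2Z_{1,n}^2$ and $E(Z_{1,n}^2\,|\,\tau'>n)\le E(Z_{1,n}^2)/P_u(\tau'>n)\le E(Z_{1,n}^2)/(\text{const}\cdot u/\sqrt n)$ — wait, that loses a factor; instead I would use the cleaner route: $E_u(Z_n^2\,|\,\tau'>n)\le E_u(Z_n^2\,|\,\tau'>n)$ and bound $E_u(\mathds 1_{\tau'>n}Z_n^2)\le E_u(Z_n^2)=u^2+E(Z_{1,n}^2)$, dividing by $P_u(\tau'>n)\ge c(u+1)/\sqrt n$ from Lemma~\ref{lem:positivity_general_walks:LB}. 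Since $E(Z_{1,n}^2)\le 3\sqrt A n$, this gives $E_u(Z_n^2\,|\,\tau'>n)\le (u^2+3\sqrt A n)\sqrt n/(c(u+1))$, which is NOT $O(u^2+n)$ for $u$ of order $1$. So the division route is too lossy and one genuinely needs the submartingale/Doob argument together with a direct (unconditioned) identity: apply optional stopping to the submartingale $Z_k^2-E(Z_{1,k}^2)$ at time $n$ under $P_u$ to get $E_u(Z_n^2\mathds 1_{\tau'>n})\le E_u(Z_{\tau'\wedge n}^2-\text{drift})+\cdots$; the point is that $Z_{\tau'}$, the overshoot below $0$, has $|Z_{\tau'}|\le (A+1)\sqrt n$ by boundedness of the truncated increments, so $E_u(Z_n^2\mathds 1_{\tau'>n})$ can be controlled in terms of $u^2$ and $n$ directly. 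This is the step I expect to be the main obstacle: extracting the clean coefficient $12u^2$ requires carefully combining Doob's maximal inequality (factor $4$) with an unconditioned second-moment identity rather than dividing by the small probability $P_u(\tau'>n)$. After that, $E_u(\max_k S_k^2\,|\,\tau>n)\le 12u^2+cn$ follows by absorbing the $D^c$ error and adjusting constants for small $n$.

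\emph{The lower bound on the conditional mean.} For the second inequality I would argue that, under $P_u(\cdot\,|\,\tau>n)$, $(S_i)$ is a submartingale, hence $i\mapsto E_u(S_i\,|\,\tau>n)$ is nondecreasing, so it suffices to prove $E_u(S_k\,|\,\tau>n)\ge c'(\sqrt k+u)$ for each fixed $k\le n$; moreover $E_u(S_k\,|\,\tau>n)\ge E_u(S_k\,|\,\tau>k)\cdot$(something)? More directly, since $\{\tau>n\}\subset\{\tau>k\}$ and conditioning further on $S_1,\dots,S_k$ and using that the walk restarted at time $k$ stays positive with probability at least a constant (Lemma~\ref{lem:macro_small_ball_walk}-type / CLT bound when started high enough), one reduces to the one-sided statement at time $k$. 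The bound $E_u(S_k\,|\,\tau>k)\ge c'\sqrt k$ (for $u=0$) is classical: by optional stopping $u+E(\text{drift})$... actually for centred walk $E_u(S_k\mathds 1_{\tau>k})=E_u(-S_\tau\mathds 1_{\tau\le k})+u\ge 0$, and one needs instead a lower bound, obtained by noting $E_u(S_k\,|\,\tau>k)\ge E_0(S_k\,|\,\tau>k)$ and that $E_0(S_k\mathds 1_{\tau>k})\ge c\sqrt k\cdot P_0(\tau>k)$ — this last from the CLT: on $\{\tau>k\}$ the endpoint $S_k$ is typically of order $\sqrt k$, quantified by comparing $P_0(\tau>k,\ S_k\ge \eta\sqrt k)$ with $P_0(\tau>k)$ using the reflection-type / ballot estimates already available (or, more elementarily, $P_0(\tau>k)\le c/\sqrt k$ from Lemma~\ref{lem:positivity_general_walks:UB}\ together with a matching lower bound from Lemma~\ref{lem:positivity_general_walks:LB}, pinning it at order $1/\sqrt k$, and then $E_0(S_k\mathds 1_{\tau>k})\ge$ const by optional stopping applied to a suitable stopped martingale). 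The additive $u$ term comes from splitting according to whether the walk from $u$ ever drops below, say, $\sqrt k$: if it does not, $S_k\ge \sqrt k$ deterministically contributes; if it does, one restarts at the first such time and uses the $u=0$ bound, together with $P_u(\tau>n\,|\,\text{drop})$ being comparable to $P_u(\tau>n)$. I would organize this as: (i) submartingale monotonicity to fix $k=n$; (ii) FKG/splitting at the level $\sqrt k$ to separate the $u$-contribution from the $\sqrt k$-contribution; (iii) the classical $E_0(S_k\,|\,\tau>k)\asymp\sqrt k$, which itself follows by combining the already-proved two-sided positivity estimates with an optional-stopping identity. The main subtlety here is ensuring the constants in steps (ii)–(iii) depend only on $A,s$ and not on $n$, which is guaranteed because every input (Lemmas~\ref{lem:positivity_general_walks:LB},~\ref{lem:positivity_general_walks:UB}, Theorem~\ref{thm:inhomog_CLT}, Lemma~\ref{lem:macro_small_ball_walk}) is uniform over admissible increment sequences.
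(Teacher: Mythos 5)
Your proposal correctly identifies the two central mechanisms (the submartingale/Doob maximal inequality for the upper bound, and the need to avoid dividing by the small probability $P_u(\tau>n)$), but both halves have genuine gaps that you flag yourself without resolving.

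For the second-moment bound, the paper does \emph{not} use the truncation Lemma~\ref{lem:truncated_RV}: instead, after Doob's inequality it introduces the stopping time $\tau'=\min\{k:\ S_k\geq \sqrt{2A'n}\}$ (crossing a level of order $\sqrt{n}$ \emph{upward}, not the return to $0$), decomposes $E_u(S_n^2\mid\tau>n)\leq 2A'n+\sum_k E_u(S_n^2\mathds{1}_{\tau'=k}\mid\tau>n)$, writes $S_n^2\leq 3(S_{k-1}^2+X_k^2+S_{k+1,n}^2)$, and controls each piece: $S_{k-1}^2\lesssim n$ by the definition of $\tau'$, $E(S_{k+1,n}^2\mid S_k=v,\ \tau>n)\lesssim n$ for $v\geq\sqrt{2A'n}$ by Doob's submartingale inequality (the constraint costs at most a factor $2$), and the overshoot $\sum_k E_u(X_k^2\mathds{1}_{\tau'=k}\mid\tau>n)\lesssim n$ by Cauchy--Schwarz together with $E_u(X_k^4\mid\tau>n)\leq E(X_k^4)/P_u(\tau>n)\lesssim\sqrt{n}$ from Lemma~\ref{lem:positivity_general_walks:LB}. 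You correctly observe that the naive route $E(\cdot\mid\tau'>n)\leq E(\cdot)/P_u(\tau'>n)$ loses a power of $n$, and that ``a direct unconditioned identity'' is needed, but your sketch via optional stopping applied to $Z_k^2-E(Z_{1,k}^2)$, together with a $\tau'$ that you define as the return to $0$ rather than the upward crossing, does not supply that identity. If you pursue the truncation route, the cleaner fix is to mimic the paper: keep $\tau'$ as the upward crossing of $\sqrt{2A'n}$; then the truncated increments give the deterministic overshoot bound $|Y_{\tau'}|\lesssim\sqrt{n}$, removing the need for Cauchy--Schwarz altogether. As written, however, your argument stops short.

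For the lower bound on $E_u(S_k\mid\tau>n)$, the paper uses a single line of FKG: since $S_k\geq 0$ on $\{\tau>n\}$, one may replace $S_k$ by $\max(S_k,0)$, which is a nondecreasing function of $(X_1,\dots,X_n)$, and then
\[
E_u\bigl(\max(S_k,0)\bigm|\tau>n\bigr)\geq E_u\bigl(\max(S_k,0)\bigr)=E_0\bigl(\max(S_k+u,0)\bigr)\geq (\sqrt{k}+u)\,P_0(S_k\geq\sqrt{k}),
\]
and $P_0(S_k\geq\sqrt{k})\geq c'$ by the CLT (Theorem~\ref{thm:inhomog_CLT}). Your multi-step plan (submartingale monotonicity, restart at the first drop below $\sqrt{k}$, comparison to the $u=0$ case, optional stopping) is far more elaborate and never reaches a finished argument; moreover the claimed monotonicity reduction to $k=n$ contradicts the statement, which requires the bound at \emph{every} $k$, and the proposed comparison $E_u(S_k\mid\tau>n)\geq E_u(S_k\mid\tau>k)\cdot(\dots)$ is not justified. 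The FKG dropping-the-conditioning trick is the idea you are missing here.
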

\begin{figure}
	\centering
	\includegraphics{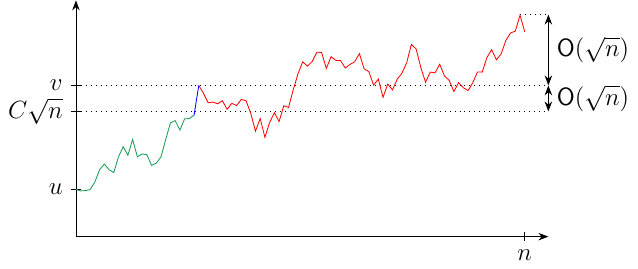}
	\caption{Upper bound in Lemma~\ref{lem:positivity_general_walks:moments}: If the random walk trajectory reaches a height above \(C\sqrt{n}\), then splitting the path at the first such point, as in the picture, shows that the final height is the sum of three terms of order at most \(\sqrt{n}\).}
\end{figure}
\begin{proof}
	Start with the lower bound. As \(\{\tau >n\}\) is an increasing event in the \(X_i\)'s, and \(\max(S_{k}, 0)\) is an increasing functions of the \(X_i\)'s, we have that by the FKG inequality,
	\begin{multline*}
		E_u\bigl( S_{k} \given \tau >n \bigr)
		=
		E_u\bigl( \max(S_{k},0) \given \tau >n \bigr)
		\geq
		E_u\bigl( \max(S_{k},0) \bigr)
		\\
		=
		E_0\bigl( \max(S_{k}+u,0) \bigr)
		\geq
		P_0(S_k\geq \sqrt{k})(\sqrt{k} + u).
	\end{multline*}But \(P_0(S_k\geq \sqrt{k})>c'\) for some \(c'>0\) depending only on \(A,s\) by the inhomogeneous CLT (Theorem~\ref{thm:inhomog_CLT}).
	
	Turn now to the upper bound. Set \(A'=\sqrt{A}\). Under \(P_u(\cdot \given \tau >n )\), \((S_i)_{i= 1}^n\) is a submartingale (see Section~\ref{subsection:mart_submart_FKG}). Thus, by Doob's submartingale inequality,
	\begin{equation*}
		E_u\bigl(\max_{i=1,\dots, n}S_i^{2} \given \tau >n\bigr)
		\leq
		4E_u\bigl(S_n^2 \bgiven \tau >n \bigr).
	\end{equation*}
	Now, define
	\begin{equation*}
		\tau' = \min\{k\geq 0:\ S_k \geq \sqrt{2A'n}\}.
	\end{equation*}
	Then,
	\begin{equation*}
		E_u\bigl(S_n^2 \bgiven \tau >n\bigr)
		\leq
		2A' n + \sum_{k=0}^n E_u\bigl(S_n^2 \mathds{1}_{\tau'=k}\bgiven \tau >n\bigr).
	\end{equation*}
	Now, for any \(v> \sqrt{2A'n}\) and \(k\in \{0,1,\dots, n-1\}\),
	\begin{multline*}
		E\bigl(S_{k+1,n}^2 \bgiven \min_{i=k,\dots, n} S_i >0,\ S_k = v\bigr)
		\leq
		\frac{E(S_{k+1,n}^2)}{P\bigl(\min_{i=k,\dots, n} S_i >0 \bgiven S_k = v\bigr)}
		\\
		\leq
		\frac{A'(n-k)}{P\bigl(\max_{i=k,\dots, n} |S_{k,i}| \leq \sqrt{2A'n}\bigr)}
		\leq
		\frac{A'(n-k)}{1-\tfrac{A'(n-k)}{2A' n}}
		\leq
		2A'n,
	\end{multline*}
	by our Hypotheses and Lemma~\ref{lem:macro_small_ball_walk}. Then, as \(S_n^2 = (S_{k-1}+ X_k +S_{k+1,n})^2 \leq 3(S_{k-1}^2 + X_k^2 +S_{k+1,n}^2)\) for \(k\geq 1\), one has that by the previous display, setting \(X_0=0\),
	\begin{equation*}
		E_u\bigl(S_n^2 \mathds{1}_{\tau'=k}\bgiven \tau >n\bigr)
		\leq
		3E_u\bigl((u^2+2A'n + X_k^2 +2A'n) \mathds{1}_{\tau'=k}\bgiven \tau >n\bigr).
	\end{equation*}
	Now,
	\begin{align*}
		\sum_{k=1}^n E_u\bigl( X_k^2 \mathds{1}_{\tau'=k}\bgiven \tau >n\bigr)
		&\leq
		\sum_{k=1}^n E_u\bigl( X_k^4 \bgiven \tau >n\bigr)^{\tfrac{1}{2}}P_u\bigl(\tau'=k\bgiven \tau >n\bigr)^{\tfrac{1}{2}}
		\\
		&\leq
		cA'n^{1/2}\sum_{k=1}^n P_u\bigl(\tau'=k\bgiven \tau >n\bigr)^{\tfrac{1}{2}}
		\leq
		cA'n
	\end{align*}
	where we used Cauchy-Schwartz inequality in the first inequality, Jensen's in the third, and the second follows from Lemma~\ref{lem:positivity_general_walks:LB} and our Hypotheses:
	\begin{equation*}
		E_u\bigl( X_k^4 \bgiven \tau >n\bigr)
		\leq
		\frac{E( X_k^4 )}{P_u(\tau >n)}
		\leq
		cA\sqrt{n}.
	\end{equation*}
	Combining everything, we obtained
	\begin{equation*}
		E_u\bigl(S_n^2 \bgiven \tau >n\bigr)
		\leq
		3u^2 + 14A'n + 3\sum_{k=1}^n E\bigl( X_k^2 \mathds{1}_{\tau'=k}\bgiven \tau >n\bigr)
		\leq
		3u^2 + C n
	\end{equation*}
	for some \(C\in (0,+\infty)\). This is the claim.
\end{proof}

\subsubsection*{Upper bound}

\begin{lemma}
	\label{lem:positivity_general_walks:UB}
	Let \(A>0, s>0\). Then, there is \(c>0\) such that for any \(n\geq 1\), \(X_1,\dots, X_n\) independent sequence of real random variables with
	\begin{equation*}
		E(X_i) = 0,
		\quad
		E(X_i^4)\leq A,
		\quad
		E(X_i\mathds{1}_{X_i>0})\geq s,
		\ i=1,\dots, n,
	\end{equation*}
	one has for any \(u\geq 0\),
	\begin{equation*}
		P_u\bigl( \tau >n \bigr) \leq \frac{c(u+1)}{\sqrt{n}}.
	\end{equation*}
\end{lemma}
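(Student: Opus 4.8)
The plan is to bound $P_u(\tau>n)$ as the ratio $E_u(S_n\mathds{1}_{\tau>n})/E_u(S_n\mid\tau>n)$, controlling the numerator from above by optional stopping and the denominator from below by the moments estimate of Lemma~\ref{lem:positivity_general_walks:moments}; the only delicate point is the control of the overshoot of the walk below $0$. First, the easy reductions: for $n\le n_0$ (some fixed constant) one has $P_u(\tau>n)\le1\le\sqrt{n_0}\,(u+1)/\sqrt n$, and if $u\ge c_0\sqrt n$ (with $c_0$ large, to be absorbed in the final constant) the right-hand side exceeds $1$; so we may assume $n$ large and $0\le u\le c_0\sqrt n$, and also $u\ge 1$ (the range $u\in[0,1)$ is handled at the end exactly as in the proof of the bounded-increment case, Theorem~\ref{thm:bounded_increm_pos_proba}, using the hypothesis $E(X_i\mathds{1}_{X_i>0})\ge s$). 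We may also assume $P_u(\tau>n)>0$.

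Since the $X_i$ are centred, $(S_{k\wedge\tau})_{k\ge0}$ is a martingale, bounded in $L^1$ on $[0,n]$, so
\[
	u=E_u(S_{\tau\wedge n})=E_u(S_n\mathds{1}_{\tau>n})+E_u(S_\tau\mathds{1}_{\tau\le n}),
	\qquad\text{i.e.}\qquad
	E_u(S_n\mathds{1}_{\tau>n})=u+E_u\bigl((-S_\tau)\mathds{1}_{\tau\le n}\bigr).
\]
On $\{\tau\le n\}$ we have $S_{\tau-1}\ge0$ and $X_\tau=S_\tau-S_{\tau-1}<0$, hence $0\le -S_\tau=-S_{\tau-1}-X_\tau\le|X_\tau|$, so the overshoot term is nonnegative and $\le E_u(|X_\tau|\mathds{1}_{\tau\le n})$. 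Decomposing over $\{\tau=k\}$ and using that $\{\tau\ge k\}$ is $\sigma(X_1,\dots,X_{k-1})$-measurable, hence independent of $X_k$,
\[
	E_u(|X_\tau|\mathds{1}_{\tau\le n})
	=\sum_{k=1}^nE_u\bigl(\mathds{1}_{\tau\ge k}\,g_k(S_{k-1})\bigr),
	\qquad g_k(v):=E\bigl(|X_k|\mathds{1}_{X_k<-v}\bigr)\le C_A(1+v)^{-2},
\]
the last bound following from $E(X_k^4)\le A$ (split according to $v\ge1$ or $v<1$). Viewing the sum as an additive functional, it is at most $C_A\sum_{l\ge0}(1+l)^{-2}\,G(u,l)$ with $G(u,l):=E_u\bigl[\#\{0\le j<\tau:S_j=l\}\bigr]$ the Green's function of the walk killed below $0$. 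Granting the bound $G(u,l)\le C(\min(u,l)+1)$, a direct computation gives $\sum_{l\ge0}(1+l)^{-2}(\min(u,l)+1)\le 2+\log(u+2)\le C'(u+1)$, hence $E_u(S_n\mathds{1}_{\tau>n})\le u+C''(u+1)\le C'''(u+1)$; combined with $E_u(S_n\mid\tau>n)\ge c'(\sqrt n+u)\ge c'\sqrt n$ from Lemma~\ref{lem:positivity_general_walks:moments}, this yields
\[
	P_u(\tau>n)=\frac{E_u(S_n\mathds{1}_{\tau>n})}{E_u(S_n\mid\tau>n)}\le\frac{C'''(u+1)}{c'\sqrt n},
\]
which is the claim.

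The main obstacle is the Green's function bound $G(u,l)\le C(\min(u,l)+1)$. The natural route is a last-exit decomposition: $G(u,l)$ is the probability that the walk reaches level $l$ before time $\tau$, times the expected number of visits to $l$ before $\tau$ for a walk started at $l$; using the uniformity of the increment laws, the latter is at most $\sup_t\bigl(P^{(t)}_l(\tau<T^+_l)\bigr)^{-1}$, where $T^+_l$ is the first return to $l$ and the supremum runs over admissible "restart times". One then needs (i) a one-sided gambler's-ruin bound $P_u(T_l<\tau)\le C(u+1)/(l+1)$ (for $T_l=\min\{k:S_k\ge l\}$), via optional stopping on $S_{T_l\wedge\tau}$ together with a bound on the overshoot below $0$ before hitting $l$ — a mild bootstrap is needed here since overshoot control and gambler's ruin feed into each other, and one can break the loop by first establishing a crude bound (e.g. via the crude overshoot estimate $-S_\tau\le|X_\tau|$ and Doob's inequality) and then iterating; and (ii) a lower bound $P_l(\tau<T^+_l)\ge c/(l+1)$, saying the walk started at $l$ has a fair chance to drop below $0$ before returning to $l$, again an optional-stopping/gambler's-ruin argument exploiting the uniform lower bound on the variances and the bounded fourth moments (with the small-ball and CLT estimates of Section~\ref{subsec:traj_est_walks:small_ball_4moments} and Theorem~\ref{thm:inhomog_CLT} supplying the requisite uniformity). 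These ingredients are classical for homogeneous walks but require care under the time-inhomogeneity; I expect this step, rather than the optional-stopping skeleton above, to contain essentially all the work.
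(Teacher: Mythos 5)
Your optional-stopping skeleton, the overshoot decomposition $E_u(|X_\tau|\mathds{1}_{\tau\le n})=\sum_k E_u(\mathds{1}_{\tau\ge k}\,g_k(S_{k-1}))$, and the recasting of the right-hand side as an occupation-time / Green's-function expression are all correct, and would indeed yield the lemma \emph{if} the Green's function bound $G(u,l)\le C(\min(u,l)+1)$ were available uniformly in $n$ and over the admissible increment sequences. That bound, however, is the whole content of the lemma in disguise and you leave it unproved. Two concrete obstructions: (a) the gambler's-ruin bound $P_u(T_l<\tau)\le C(u+1)/(l+1)$ that your route requires is obtained by optional stopping on $S_{T_l\wedge\tau\wedge n}$, which reintroduces exactly the overshoot term you were trying to control — you acknowledge the circularity and gesture at ``a mild bootstrap,'' but no actual iteration scheme is specified, and it is not obvious one can start from the crude $O(n^{1/4})$ overshoot and close the loop without essentially re-deriving the lemma; (b) the last-exit factorization $G(u,l)= P_u(T_l<\tau)\cdot(\text{expected returns})$ does not hold cleanly for time-inhomogeneous walks, because the expected number of returns to level $l$ (or to $[l,l+1)$, since the steps are real-valued) depends on the time at which $l$ is first reached, and the $\sup_t$ over ``admissible restart times'' you invoke is a further unproved uniform estimate of the same flavour.

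The paper circumvents the Green's function entirely. It first gets a crude bound $P_u(\tau>n)\le Cu/\sqrt n$ for $u\ge n^{1/4}$ (where the $O(n^{1/4})$ overshoot bound from Hölder/Jensen is affordable), upgrades it to $C(u+1)(\ln n)^c/\sqrt n$ for all $u$ by an iterated Markov-property bootstrap using the conditional moment bound of Lemma~\ref{lem:positivity_general_walks:moments}, and then, via a time-reversal argument combined with the inhomogeneous CLT (Theorem~\ref{thm:inhomog_CLT}), establishes the pointwise tail bound $P_u(\tau=n)\le c(u+1)/n^{3/2-\epsilon}$. Feeding this pointwise bound into the overshoot sum $\sum_k A^{1/4}P_u(\tau=k)^{3/4}$ makes it converge to $O((u+1)^{3/4})$, which is all that is needed. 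So the key technical input in the paper is a tail estimate on $P_u(\tau=\cdot)$, not a Green's function estimate; if you want to make your route work you would have to supply a comparably nontrivial input — one you have identified as ``essentially all the work'' but not carried out.
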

\begin{proof}
	We proceed in several steps. We first prove an upper bound valid for large values of the starting point.
	\begin{claim}
		\label{claim:prf_lem:positivity_proba_UB:poly}
		There is \(C>0\) depending only on \(A,s\) such that for \(u\geq n^{1/4}\),
		\begin{equation*}
			P_u(\tau > n) \leq C\frac{ u}{\sqrt{n}}.
		\end{equation*}
	\end{claim}
	\begin{proof}
		First, by the optional stopping Theorem, for \(u>0\),
		\begin{multline*}
			u
			=
			E_u(S_{\tau'\wedge n})
			=
			P_u(\tau >n)E(S_{n} \given \tau >n) + \sum_{k=1}^{n}E_u(S_{k} \mathds{1}_{\tau=k} )
			\\
			\geq
			P_u(\tau >n)c'\sqrt{n} - \sum_{k=1}^{n}A^{1/4} P(\tau=k)^{3/4}
			\geq
			P_u(\tau >n)c'\sqrt{n} - A^{1/4}n^{1/4},
		\end{multline*}
		where \(c'>0\) depends only on \(A,s\), and we used Lemma~\ref{lem:positivity_general_walks:moments}, \(S_{\tau} \geq -|X_{\tau}|\), Hölder's inequality, and our moment assumption in the first inequality, and Jensen's inequality in the second.
		Re-arranging, this implies
		\begin{equation*}
			P_u(\tau >n)
			\leq
			\frac{u + A^{1/4}n^{1/4} }{c'\sqrt{n}},
		\end{equation*}
		which is the wanted claim.
	\end{proof}
	Then, we upgrade this to a bound on the tails of \(\tau\) up to a log-correction.
	\begin{claim}
		\label{claim:prf_lem:positivity_proba_UB:log}
		There are \(C,c>0\) depending only on \(A,s\) such that for \(u\geq 0\),
		\begin{equation*}
			P_u(\tau > n) \leq C(u+1)\frac{(\ln(n+1))^c}{\sqrt{n}}.
		\end{equation*}
	\end{claim}
	\begin{proof}
		Suppose \(u< n^{1/4}\) (otherwise the claim follows from Claim~\ref{claim:prf_lem:positivity_proba_UB:poly}). Let \(r=1/2\). Let \(L= \lceil n^{r}\rceil\). First, by Claim~\ref{claim:prf_lem:positivity_proba_UB:poly}, for \(n\) larger than some universal \(n_0\geq 3\) (\(3\) is taken for later convenience and bears no particular importance),
		\begin{align*}
			P_u(\tau >n)
			&=
			E_{u}\bigl( \mathds{1}_{\tau >L} P( \tau >n \given S_1,\dots, S_L)\bigr)
			\\
			&\leq
			E_{u}\bigl( \mathds{1}_{\tau >L} (\mathds{1}_{S_L\geq n^{r/2}}P( \min_{j=L+1,\dots,n}S_{j}\geq 0 \given S_L) \\
			&\qquad +  \mathds{1}_{S_L\leq n^{r/2}}P( \min_{j=L+1,\dots,n}S_{j}\geq 0 \given S_L= n^{r/2}) )\bigr)
			\\
			&\leq
			E_{u}\bigl( \mathds{1}_{\tau >L} (\mathds{1}_{S_L\geq n^{r/2}} \tfrac{CS_L}{\sqrt{n-L}} +  \mathds{1}_{S_L\leq n^{r/2}}\tfrac{Cn^{r/2}}{\sqrt{n-L}} )\bigr)
			\\
			&\leq
			\tfrac{2C}{\sqrt{n}} E_{u}\bigl( \mathds{1}_{\tau >L}(S_L + n^{r/2})\bigr)
		\end{align*}
		as \(v\mapsto P( \min_{j=L+1,\dots,n}S_{j}\geq 0 \given S_L= v)\) is increasing in \(v\). But now, by Lemma~\ref{lem:positivity_general_walks:moments},
		\begin{equation*}
			E_{u}\bigl( S_L\bgiven \tau >L\bigr)
			\leq
			u + c\sqrt{L}
			\leq
			c'n^{r/2}.
		\end{equation*}
		So, we obtained that there is \(c\geq 1\) such that for \(u\leq n^{r/2}\), \(n\geq n_0\),
		\begin{equation*}
			P_u(\tau >n)
			\leq
			\tfrac{cn^{r/2}}{\sqrt{n}}P_u(\tau > n^{r}).
		\end{equation*}
		Define
		\begin{equation*}
			k_u = \min\big\{k\geq 1 :\ n^{r^k} < \max(u^{4},n_0) \big\}.
		\end{equation*}
		By the previously obtained bound and a direct induction, we obtain
		\begin{equation*}
			P_u(\tau >n)
			\leq
			\tfrac{cn^{r/2}}{\sqrt{n}}P_u(\tau > n^{r})
			\leq
			\tfrac{cn^{r/2}}{\sqrt{n}}\cdot \tfrac{cn^{r^2/2}}{n^{r/2}} P_u(\tau > n^{r^2})
			\leq
			\dots
			\leq
			\tfrac{c^{k_u}}{\sqrt{n}} n^{r^{k_u}/2}P_u(\tau > n^{r^{k_u}}).
		\end{equation*}
		Now, \(k_u\leq \frac{\ln(\ln(n))}{|\ln(r)|}\), so for \(u\leq n_0^{r/2}\),
		\begin{equation*}
			P_u(\tau >n)
			\leq
			C\frac{(\ln(n))^c}{\sqrt{n}}
		\end{equation*}
		for some \(C,c>0\), as \(n^{r^{k_u}}\leq n_0\) in that case. On the other hand, if \(u> n_0^{r/2}\),
		\begin{equation*}
			P_u(\tau >n)
			\leq
			\tfrac{c^{k_u}}{\sqrt{n}} n^{r^{k_u}/2}P_u(\tau > n^{r^{k_u}})
			\leq
			C\tfrac{c^{k_u}}{\sqrt{n}} u
			\leq
			Cu\tfrac{(\ln(n))^c}{\sqrt{n}}
		\end{equation*}
		for some \(C,c>0\), by Claim~\ref{claim:prf_lem:positivity_proba_UB:poly} as \(n^{r^{k_u}} \leq u^{4}\) in this case.
	\end{proof}
	\begin{claim}
		\label{claim:prf_lem:positivity_proba_pointwise_UB}
		Let \(\epsilon>0\). There is \(c>0\) depending only on \(A,s,\epsilon\) such that for \(u\geq 0\),
		\begin{equation*}
			P_u(\tau = n) \leq \frac{c(u+1)}{n^{\frac{3}{2} -\epsilon}}.
		\end{equation*}
	\end{claim}
	\begin{proof}
		Let \(f(k) = C\frac{(\ln(k+1))^c}{\sqrt{k}}\) with \(C,c\) given by Claim~\ref{claim:prf_lem:positivity_proba_UB:log}. We can always suppose \(n\) large enough. First, we claim that there is \(c>0\) depending only on \(A,s\) such that for any \(n\),
		\begin{equation}
			\label{eq:prf:claim:prf_lem:positivity_proba_pointwise_UB:log_positiv_endpoint}
			\sup_{x\in \R}P_u\bigl(\tau >n,\ S_n\in [x,x+1]\bigr) \leq \frac{c(u+1)}{\sqrt{n}}f(n).
		\end{equation}
		Indeed, letting \(L=\lfloor n/2\rfloor\), we can use inclusion of events and the Markov property to write
		\begin{equation*}
			P\bigl(\tau >n,\ S_n\in [x,x+1]\bigr)
			\leq
			\int_{0}^{\infty} dy P_u\bigl(\tau >L,\ S_L\in dy\bigr) P(S_{n}\in [x,x+1]\given S_L=y).
		\end{equation*}
		But, using the triangle inequality and Theorem~\ref{thm:inhomog_CLT}, \(P(S_{n}\in [x,x+1]\given S_L=y)\leq \frac{c}{\sqrt{n}}\) uniformly over \(x,y\) (a term \(c/\sqrt{n}\) comes from the error committed by replacing \(S_n-S_L\) by a centred Gaussian with the same variance, and another \(c/\sqrt{n}\) comes from upper bounding the event that the Gaussian is in \([x-y,x-y+1]\)). So,
		\begin{multline*}
			\int_{0}^{\infty} dy P_u\bigl(\tau >L,\ S_L\in dy\bigr) P(S_{n}\in [x,x+1]\given S_L=y)
			\\
			\leq
			\frac{c}{\sqrt{n}}P_u(\tau >L)
			\leq
			\frac{c}{\sqrt{n}}(u+1)f(L),
		\end{multline*}
		by Claim~\ref{claim:prf_lem:positivity_proba_UB:log}. Introduce the backward walk
		\begin{equation*}
			\backvec{S}_{k} = \backvec{S}_{k-1} - X_{n-k}.
		\end{equation*}
		Still letting \(L=\lfloor n/2\rfloor\), \(L' = n-L-1\), we therefore have that by Markov property and our Hypotheses on the moments of the \(X_i\)'s,
		\begin{align*}
			&P_u(\tau = n)
			=
			E_u\Bigl( \mathds{1}_{\tau>L} E\Bigl(\mathds{1}_{\min_{i=L+1,\dots, n-1} S_{i} \geq 0} P(X_n< -S_{n-1}\given S_{n-1}) \Bgiven S_L \Bigr) \Bigr)
			\\
			&\quad\leq
			E_u\Bigl( \mathds{1}_{\tau>L} E\Bigl(\mathds{1}_{\min_{i=L+1,\dots, n-1} S_{i} \geq 0} \frac{c}{(1+S_{n-1})^4} \Bgiven S_L \Bigr) \Bigr)
			\\
			&\quad=
			\sum_{k=0}^{\infty} E_u\Bigl( \mathds{1}_{\tau>L} \mathds{1}_{S_{n-1}\in [k,k+1)} \frac{c}{(1+S_{n-1})^4} P\Bigl(\min_{i=L+1,\dots, n-1} S_{i} \geq 0 \Bgiven S_L, S_{n-1} \Bigr) \Bigr)
			\\
			&\quad\leq
			\sum_{k=0}^{\infty} \frac{c}{(1+k)^4} E_u\Bigl( \mathds{1}_{\tau>L} \mathds{1}_{S_{n-1}\in [k,k+1)} P\Bigl(\min_{i=1,\dots,L'} \backvec{S}_{i} \geq -S_{n-1}   \Bgiven \backvec{S}_{L'} = S_L-S_{n-1}, \backvec{S}_{0}= 0 \Bigr) \Bigr)
			\\
			&\quad\leq
			\sum_{k=0}^{\infty} \frac{c}{(1+k)^4} E_u\Bigl( \mathds{1}_{\tau>L} P\Bigl(\backvec{S}_{L'} \in (S_L,S_L+1],\ \min_{i=1,\dots,L'} \backvec{S}_{i} \geq 0   \Bgiven \backvec{S}_{0}= k+1 \Bigr) \Bigr)
			\\
			&\quad\leq
			\sum_{k=0}^{\infty} \frac{c}{(1+k)^4} \frac{c(k+2)}{\sqrt{L'}} f(L') P_u(\tau>L)
			\leq
			C\frac{(u+1)}{\sqrt{n}}f(n)^2,
		\end{align*}
		where we globally translated \(\backvec{S}\) by \(k+1\) in the third inequality, and used~\eqref{eq:prf:claim:prf_lem:positivity_proba_pointwise_UB:log_positiv_endpoint}, and Claim~\ref{claim:prf_lem:positivity_proba_UB:log} in the last line. Using \((\ln(n+1))^a \leq c(\epsilon,a) n^{\epsilon}\) for any \(a,\epsilon>0\) gives the wanted claim.
	\end{proof}
	We are now ready to conclude. By Doob's optional stopping Theorem,
	\begin{equation*}
		u
		=
		E_u(S_{\tau \wedge n})
		=
		P_u(\tau>n)E_u(S_n\given \tau >n) + \sum_{k=1}^{n} E_{u}(S_k\mathds{1}_{\tau=k}).
	\end{equation*}
	But now,
	\begin{multline*}
		\sum_{k=1}^{n} E_{u}(S_k\mathds{1}_{\tau=k})
		\geq
		- \sum_{k=1}^{n} E_{u}(|X_k|\mathds{1}_{\tau=k})
		\geq
		- \sum_{k=1}^{n} E(|X_k|^4)^{1/4}P_u(\tau=k)^{3/4}
		\\
		\geq
		- A^{1/4} c\sum_{k=1}^{n} \tfrac{(u+1)^{3/4}}{k^{33/32}}
		\geq
		- C (u+1)^{3/4},
	\end{multline*}
	where we used Claim~\ref{claim:prf_lem:positivity_proba_pointwise_UB} with \(\epsilon=1/8\) in the third inequality. Combining this with the fact that by Lemma~\ref{lem:positivity_general_walks:moments}, \(E_u(S_n\given \tau >n)\geq c'\sqrt{n}\) for some \(c'>0\) depending only on \(A,s\), we obtain
	\begin{equation*}
		P_u(\tau>n)
		\leq
		\frac{u + C (u+1)^{3/4}}{c' \sqrt{n}}
		\leq
		\frac{c(u+1)}{\sqrt{n}}.
		\qedhere
	\end{equation*}
\end{proof}

\section{Inhomogeneous Local Limit Theorem}
\label{sec:LLT}

The main tool we will use to convert ``free endpoint'' estimates to estimates on bridges or excursions is an inhomogeneous version of the Local Limit Theorem.

\begin{theorem}
	\label{thm:inhomo_LLT}
	Let \(\delta_0 >0\), \(c_0<\infty\), \(a\in [0,1]^{\Z}\) be irreducible and aperiodic. Let \(\alpha\in [0,2/3)\). Then, there exist \(n_0 \geq 1\), \(C\geq 0\), such that for any \(X_1,X_2,\dots\) independent sequence of \(\Z\)-valued random variables with laws in \(\calM_{\delta_0,c_0}^a\), the following holds. For any \(n\geq n_0\), \(y\in \Z\) with \(\abs{y-E_0(S_n)}\leq n^{\alpha}\)
	\begin{equation*}
        \exp(-Cn^{-\min(2-3\alpha,1/3)})
		\leq
		\sqrt{2\pi B_{n}}e^{(y-m_{n})^2/2B_{n}} P_0(S_{n} = y)
		\leq
		\exp(Cn^{-\min(2-3\alpha,1/3)})
	\end{equation*}
	where \(B_{k} = \sum_{i=1}^{k} \Var(X_i)\), \(m_{k} = E(S_k)\).
\end{theorem}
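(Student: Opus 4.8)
The plan is to follow the classical Fourier-analytic route to the local limit theorem, but keeping careful track of all constants so they depend only on $\delta_0, c_0, a$ (and $\alpha$). Write $\phi_k(t) = E(e^{\rmi t X_k})$ for the characteristic function of $X_k - E(X_k)$ (centred), so that, by Fourier inversion on $\Z$,
\[
	P_0(S_n = y) = \frac{1}{2\pi}\int_{-\pi}^{\pi} e^{-\rmi t (y-m_n)} \prod_{k=1}^n \phi_k(t)\,\rmd t.
\]
The target is to show this integral is $\frac{1}{\sqrt{2\pi B_n}} e^{-(y-m_n)^2/2B_n}(1 + O(n^{-\min(2-3\alpha,1/3)}))$, which after taking logarithms gives the multiplicative form in the statement. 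I would split the integral into three regions: the \emph{central region} $|t| \le n^{-1/2+\eta}$ for a small $\eta>0$ to be chosen, an \emph{intermediate region} $n^{-1/2+\eta} \le |t| \le \epsilon_0$, and the \emph{tail region} $\epsilon_0 \le |t| \le \pi$.

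\medskip\noindent\textbf{Uniform control of the characteristic functions.} The compactness of $\calM_{\delta_0,c_0}^a$ and irreducibility/aperiodicity give the key uniform estimates. First, since all laws lie in $\calM_{\delta_0,c_0}^a$, they have uniformly bounded exponential moments, hence uniformly bounded moments of every order; in particular $\sigma_-^2 \le \Var(X_k) \le \sigma_+^2$ with $\sigma_\pm$ depending only on $a,\delta_0,c_0$ (this is recorded in Section~\ref{sec:notations}), and the third absolute moments are uniformly bounded by some $A$. A Taylor expansion then gives, for $|t|\le \epsilon_0$ small enough (depending only on the data),
\[
	\ln\phi_k(t) = -\tfrac12 \Var(X_k) t^2 + O(A|t|^3),
\]
with the $O$ uniform in $k$ and in the law. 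Summing, $\prod_k \phi_k(t) = e^{-B_n t^2/2}(1 + O(An|t|^3))$ on the central region, and on the intermediate region the same expansion gives $|\prod_k \phi_k(t)| \le e^{-c B_n t^2}$ for some $c>0$. Second, aperiodicity/irreducibility, \emph{together with compactness}, yield a uniform spectral gap away from $0$: there is $\rho = \rho(a,\delta_0,c_0) < 1$ with $\sup_{\epsilon_0 \le |t| \le \pi} |\phi_k(t)| \le \rho$ for every admissible law. This is where I expect the main subtlety: one must argue that $\max_{\epsilon_0\le|t|\le\pi}|\phi_p(t)|$, as a function of $p \in \calM_{\delta_0,c_0}^a$, is continuous and strictly less than $1$ on this compact set (strictness because $a$ irreducible and aperiodic forces every $p \succeq a$ to be irreducible and aperiodic, so its characteristic function cannot have modulus $1$ off the origin), hence attains a maximum $\rho<1$ by compactness. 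Consequently the tail region contributes at most $\pi\,\rho^n$, which is super-polynomially small and absorbed into the error.

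\medskip\noindent\textbf{Assembling the estimate.} On the central region, substitute $s = t\sqrt{B_n}$ (note $B_n \asymp n$ uniformly): one gets $\frac{1}{2\pi\sqrt{B_n}}\int e^{-\rmi s(y-m_n)/\sqrt{B_n}} e^{-s^2/2}(1+O(A|s|^3/\sqrt{B_n}))\,\rmd s$. The leading term is exactly the Gaussian density $\frac{1}{\sqrt{2\pi B_n}}e^{-(y-m_n)^2/2B_n}$; the error from the cubic correction is $O(n^{-1/2})$ \emph{in absolute terms}, but since $|y-m_n|\le n^\alpha$ the Gaussian density itself is of order at least $n^{-1/2}e^{-n^{2\alpha-1}/2\sigma_-^2}$, so the \emph{relative} error is $O(n^{-1/2}\cdot e^{n^{2\alpha-1}}) = O(n^{-1/2+O(1)})$ — here is where the constraint $\alpha<2/3$ enters: one needs $n^{2\alpha-1}$ to be dominated (up to the right power) by the gain, and a more careful expansion (keeping the quadratic-in-correction term, i.e. expanding $\ln\phi_k$ to fourth order, giving a correction of relative size $n^{4\alpha-2}$ alongside $n^{-1/3}$) produces precisely the exponent $\min(2-3\alpha, 1/3)$; the $1/3$ comes from optimising $\eta$ in the split so that the intermediate-region bound $e^{-cB_n n^{-1+2\eta}}$ and the central-region cubic error $n^{-1/2+3\eta/\ldots}$ balance. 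The intermediate region contributes, after the same rescaling, at most $\frac{1}{\sqrt{B_n}}\int_{|s|\ge n^\eta} e^{-cs^2}\rmd s = O(n^{-1/2}e^{-cn^{2\eta}})$, again super-polynomially small relative to the Gaussian provided $\alpha<1$ (so in particular $\alpha<2/3$). Collecting the three pieces, dividing by $\frac{1}{\sqrt{2\pi B_n}}e^{-(y-m_n)^2/2B_n}$, and taking logarithms yields the claimed two-sided bound with $C$ depending only on $\delta_0,c_0,a,\alpha$. The main obstacle, as noted, is establishing the \emph{uniform} spectral gap $\rho<1$ over the whole class $\calM_{\delta_0,c_0}^a$, and correctly bookkeeping the interplay between the polynomial error terms and the (possibly large, but sub-exponential) factor $e^{(y-m_n)^2/2B_n}$ to extract the exponent $\min(2-3\alpha,1/3)$.
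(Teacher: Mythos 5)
Your proposal has the right skeleton — Fourier inversion on $\Z$, a three-zone split of the integral, and a uniform spectral gap for $\sup_{|t|\ge\epsilon_0}\abs{\phi_p(t)}$ over the compact set $\calM_{\delta_0,c_0}^a$ (which matches the paper's Observation on aperiodicity). But there is a genuine gap in the central-zone analysis, and you seem half-aware of it without resolving it.

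The problem is that you do the Fourier inversion and Taylor expansion \emph{around $t=0$} for the centred characteristic functions. The central-zone computation then produces an \emph{additive} error of polynomial order: after the substitution $s=t\sqrt{B_n}$, the cubic correction $O(An\abs{t}^3)$ contributes roughly $O(n^{-1})$ to $P_0(S_n=y)$. To convert this into the \emph{multiplicative} estimate the theorem asserts, you must divide by the main term $\tfrac{1}{\sqrt{2\pi B_n}}e^{-(y-m_n)^2/2B_n}$. When $\abs{y-m_n}\asymp n^\alpha$ with $\alpha>\tfrac12$, this main term is of order $n^{-1/2}e^{-cn^{2\alpha-1}}$, which is \emph{super-polynomially} small. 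A polynomial additive error $O(n^{-1})$ is then hopelessly large in relative terms — you wrote the relative error as $O(n^{-1/2}e^{n^{2\alpha-1}})=O(n^{-1/2+O(1)})$, but $e^{n^{2\alpha-1}}$ is not $n^{O(1)}$ for $\alpha>\tfrac12$; it diverges faster than any polynomial. Pushing the Taylor expansion to fourth (or any finite) order as you suggest does not help: it improves the additive error by further \emph{polynomial} factors, but you would need the additive error to shrink like $e^{-cn^{2\alpha-1}}$, which no finite-order expansion around $t=0$ can produce.

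What is missing is the tilting (saddle-point) step, which is the heart of the paper's argument. The paper first exponentially tilts each $X_i$ by the same $\lambda$ chosen so that $\bbH_n'(\lambda)=y$ (this $\lambda$ exists with $\abs{\lambda}\lesssim n^{\alpha-1}$ by the uniform lower bound $\bbH_n''\ge\gamma n$). Under the tilted law the target $y$ is the \emph{mean}, so the Fourier integral for $P(S_n^\lambda=y)$ is evaluated at the centre of the tilted distribution and the main term is of order $n^{-1/2}$ with no exponential factor; polynomial errors are then harmless. The change-of-measure identity $P_0(S_n=y)=e^{\bbH_n(\lambda)-\lambda y}P(S_n^\lambda=y)$ reproduces the Gaussian factor exactly: the paper shows $\bbH_n(\lambda)-\lambda y = -\tfrac{(y-m_n)^2}{2B_n}+O(n^{3\alpha-2})$, which is where the exponent $2-3\alpha$ in $\min(2-3\alpha,1/3)$ arises (the $1/3$ comes from the central-zone Fourier error for the tilted walk). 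So your plan is sound up to and including the tail/intermediate zones, but the central zone must be analysed for the tilted walk, not the original one; without tilting, the argument breaks down for every $\alpha\in(1/2,2/3)$.
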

\begin{proof}
	Use the shorthands \(\calM \equiv \calM_{\delta_0,c_0}^a\) and \(P\equiv P_0\).
	We start by some observations/definitions which follow from our Hypotheses, and that will be used in the proof.
	\begin{enumerate}[series=LLTPrfObs]
		\item \label{LLT:prf:unif_variances} For every \(i\geq 1\),
		\begin{equation*}
			0< \sigma_-^2 \coloneqq \inf_{p\in \calM} \sigma_p^2 \leq \sup_{p\in \calM} \sigma_p^2 \eqqcolon \sigma_+^2 <\infty.
		\end{equation*}
		\item \label{LLT:prf:analyticity_radius} There exists \(\delta_0 \geq 4r>0\) such that for all \(p\in \calM\), \(z\mapsto H_p(z)\) is analytic in the disc \(\bbD_{4r} = \setof{z\in \bbC}{\abs{z}<4r}\). Fix such an \(r\) for the rest of the proof.
		\item \label{LLT:prf:norm_charac_funct} For any \(b>0\), there exists \(c_b>0\) such that
		\begin{equation*}
			\sup_{p\in \calM}\sup_{\lambda\in [-r,r]}\sup_{b\leq \abs{\theta} \leq \pi} \Babs{\frac{M_p(\lambda +\rmi \theta)}{M_p(\lambda)}} \leq e^{-c_b}.
		\end{equation*}
		Indeed, without the sup over \(p\), it is a consequence of aperiodicity of \(p\). The bound with the sup over \(p\) then follows from compactness of \(\calM\).
	\end{enumerate}
	Introduce then
	\begin{equation*}
		c_r = \frac{1}{4r^3} \sup_{p\in \calM} \sup_{\abs{z}\leq 3r} |H_p(z)|,
	\end{equation*}
	with \(r\) given in Observation~\ref{LLT:prf:analyticity_radius}. Note that the sup above is finite by compactness of \(\calM\) (recall that \(H_p\) is well defined and analytic on \(\bbD_{4r}\) for any \(p\in \calM\)). This will be used to bound the error term in second-order Taylor approximations of \(H_p\)'s. Note already that for any \(\abs{z}\leq r\), Cauchy integral formula gives
	\begin{equation}\label{eq:boundH3}
		\abs{H_p^{(3)}(z)}
        \leq
        \frac{6}{(2r)^3}\sup_{|w-z|= 2r} |H_p(w)|
        \leq
        3c_r,
	\end{equation}
	as \(\setof{w}{\abs{z-w} \leq 2r}\subset \bbD_{3r}\) for \(\abs{z}\leq r\).
	\begin{enumerate}[resume=LLTPrfObs]
		\item \label{LLT:prf:expansion_H_p} One has that for any \(p\in \calM\), \(\lambda\in (-r,r)\), \(z\in \bbD_r\),
		\begin{equation*}
			\Babs{H_p(\lambda + z)- H_p(\lambda) - z H_p'(\lambda) - \frac{z^2}{2}H_p''(\lambda)}
			\leq
            \frac{z^3 \sup_{|w-\lambda| = 2r}|H_p(w)| }{(2r)^3(1-\tfrac{|z|}{2r})}
            \leq
			c_r \abs{z}^3,
		\end{equation*}
		where the bound follows from Cauchy's integral formula (\(z\mapsto H_p(\lambda + z)\) is analytic on a domain containing the closure of \(\bbD_{2r}\)).
		\item \label{LLT:prf:unif_second_derivatives_H_p_lambda} One has, with \(r\) still given by Observation~\ref{LLT:prf:analyticity_radius},
		\begin{equation*}
			\gamma
            =
            \inf_{p\in \calM}\inf_{\abs{\lambda}\leq r}H_p''(\lambda)
			>0.
		\end{equation*}
		This follows from the definition and compactness of \(\calM \times [-r,r]\), and from the irreducibility of \(a\). Indeed, \(H_p''(\lambda)\) is the variance of \(\frac{e^{\lambda x}}{M_p(\lambda)}p(x)\), which is controlled by \(a\) and the uniform bounds one has on \(\lambda\) and \(M_p(\lambda)\).
	\end{enumerate}
	Let then \(X_1,X_2,\dots\) be as in the statement. For \(i\geq 1\), denote
	\begin{equation*}
		M_i(z) = E(e^{zX_i}),\quad H_i(z) = \ln(M_i(z)),
		\quad \sigma_i^2 = \Var(X_i).
	\end{equation*}
	Also, for \(n\geq 1\), define
	\begin{equation*}
		\bbM_{n}(z) = \prod_{i=1}^{n}M_i(z),
		\quad
		\bbH_{n}(z) = \ln(\bbM_{n}(z)) = \sum_{i=1}^{n} H_i(z).
	\end{equation*}
	\begin{enumerate}[resume=LLTPrfObs]
		\item\label{LLT:prf:expansion_bbH_n} For any \(n\geq 1\), \(\lambda\in (-r,r)\), the function \(z\mapsto\bbH_{n}(\lambda + z)\) is
        well defined and analytic in \(\bbD_{3r}\). Moreover, by Observation~\ref{LLT:prf:expansion_H_p}, for \(z\in \bbD_{r}\),
		\begin{equation*}
			\Babs{\bbH_{n}(\lambda + z) - \bbH_{n}(\lambda) - \bbH_{n}'(\lambda) z - \frac{z^2}{2} \bbH_{n}''(\lambda)}
			\leq
			n c_r \abs{z}^3.
		\end{equation*}
		\item \label{LLT:prf:existence_tilt}Finally, as \(n^{\alpha} =o(n) \), there exists \(n_{\Delta}\geq 0\) (uniform over \(X_1,X_2,\dots\)) such that for any \(n\geq n_{\Delta}\), and any \(x\in \Z\) with \(\abs{x-m_n}\leq n^{\alpha}\), there exists \(\lambda\in (-r,r)\) such that \(\bbH_{n}'(\lambda) = x \), and \(\abs{\lambda} \leq  \frac{\abs{x-m_n}}{\gamma n}\). Indeed, for \(\abs{t}\leq r\), using Observation~\ref{LLT:prf:unif_second_derivatives_H_p_lambda}
		\begin{equation*}
			\babs{\bbH_n'(t) - \bbH'_n(0)}
			=
			\Babs{\int_{0}^{t} ds \bbH''_n(s)}
			\geq
			n \gamma \abs{t}.
		\end{equation*}
		Recalling that \(\bbH_n'(0) = m_n\), using the lower bound and the Intermediate Value Theorem gives the existence of the required \(\lambda\) as soon as \( n^{\alpha-1} \leq \gamma r\). Moreover, for the choice of \(\lambda\) corresponding to \(x\), one has \(\bbH_n'(\lambda) - \bbH'_n(0) = x-m_n\), giving the upper bound on \(\abs{\lambda}\).
	\end{enumerate}
	We are now ready to start the main part of the proof. For \(\lambda\in (-r,r)\), introduce \(X_{i}^{\lambda}\), \(i\geq 1\) an independent family of random variables with
	\begin{equation*}
		P(X_{i}^{\lambda} = x) = \frac{1}{M_i(\lambda)}p_i(x)e^{\lambda x},
	\end{equation*}
	and denote \(S_{k}^{\lambda}\) the associated random walk.
	
	Define
	\begin{equation*}
		n_0' = \max(n_{\Delta}, c_r^3).
	\end{equation*}
	with \(n_{\Delta}\) given in Observation~\ref{LLT:prf:existence_tilt}. Let now \(n\geq n_{0}'\), \(y\in \Z\) with \(\abs{y- m_n}\leq n^{\alpha}\). Let \(\lambda\in [-\frac{n^{\alpha}}{\gamma n},\frac{n^{\alpha}}{\gamma n}]\) be such that \(\bbH_n'(\lambda) = y\). Then, one has
	\begin{equation}
		\label{eq:LLT_prf:tilt}
		P(S_n = y)
		=
		\bbM_{n}(\lambda)E\Bigl(\mathds{1}_{S_{n}^{\lambda} = y} e^{-\lambda\sum_{i=1}^n X_i^{\lambda} }\Bigr)
		=
		e^{\bbH_n(\lambda)-\lambda y}P(S_{n}^{\lambda} = y).
	\end{equation}
	We then estimate the probability in the last expression. Note that the characteristic function of \(S_{n}^{\lambda}\) at \(\theta\) is given by \(\frac{\bbM_n(\lambda + \rmi \theta)}{\bbM_n(\lambda)}\).
	From the Fourier inversion formula, one has
	\begin{equation}
		\label{eq:LLT_prf:Fourier}
		P(S_{n}^{\lambda} = y)
		=
		\frac{1}{2\pi}\int_{-\pi}^{\pi} d\theta e^{-\rmi \theta y} 
		\frac{\bbM_n(\lambda + \rmi \theta)}{\bbM_n(\lambda)}.
	\end{equation}
	Letting \(\delta = \min(r,\pi, \tfrac{\gamma}{4 c_r})\), and \(\epsilon = \frac{1}{18}\), one can split the integral into three parts:
	\begin{equation*}
		\int_{-\pi}^{\pi} d\theta f(\theta)
		=
		\underbrace{\int_{\abs{\theta}<n^{-\frac{1}{2}+\epsilon}} d\theta f(\theta)}_{= I_{\epsilon}}
		+
		\underbrace{\int_{n^{-\frac{1}{2}+\epsilon}\leq \abs{\theta}< \delta} d\theta f(\theta)}_{= I_{\epsilon,\delta}}
		+
		\underbrace{\int_{\delta\leq \abs{\theta}\leq \pi} d\theta f(\theta)}_{= I_{\delta}},
	\end{equation*}
	with \(f(\theta) = e^{-\rmi \theta y} \frac{\bbM_n(\lambda + \rmi \theta)}{\bbM_n(\lambda)}\). To estimate \(I_{\delta}\), we use Observation~\ref{LLT:prf:norm_charac_funct}: one obtains
	\begin{equation}
		\label{eq:LLT_prf:I_delta}
		\abs{I_{\delta}}
		\leq
		\int_{\delta\leq \abs{\theta}\leq \pi} d\theta \prod_{i=1}^n\Babs{
			\frac{M_i(\lambda + \rmi \theta)}{M_i(\lambda)}}
		\leq
		2\pi e^{-c_{\delta} n},
	\end{equation}
	for some \(c_{\delta}>0\) depending only on \(\delta\) (and \(\delta_0,c_0,a\)).
	Then, we can use the expansion of Observation~\ref{LLT:prf:expansion_bbH_n} to handle \(I_{\epsilon,\delta}\):
	\begin{multline}
		\label{eq:LLT_prf:I_epsilon_delta}
		\abs{I_{\epsilon,\delta}}
		=
		\Babs{\int_{n^{-\frac{1}{2}+\epsilon}\leq \abs{\theta}< \delta} d\theta e^{-\rmi \theta y} 
			e^{\bbH_n(\lambda + \rmi \theta)-\bbH_n(\lambda)}}
		\leq
		\int_{n^{-\frac{1}{2}+\epsilon}\leq \abs{\theta}< \delta} d\theta 
		e^{ - \frac{\theta^2}{2}\bbH_n''(\lambda) + nc_r \abs{\theta}^3}
		\\
		\leq
		\int_{n^{-\frac{1}{2}+\epsilon}\leq \abs{\theta}< \delta} d\theta 
		e^{ - n\theta^2(\gamma - 2c_r \delta)/2}
		\leq
		2\int_{n^{-\frac{1}{2}+\epsilon}}^{\delta} d\theta 
		e^{ - \gamma n\theta^2/4}
		\leq
		2\pi e^{-\gamma n^{2\epsilon}/4},
	\end{multline}
	where we used Observation~\ref{LLT:prf:unif_second_derivatives_H_p_lambda}, \(\bbH_n'(\lambda) = y\), and the choice of \(\delta\) in the second line.
	Remains to control the leading term: \(I_{\epsilon}\). First, we again use the expansion of Observation~\ref{LLT:prf:expansion_bbH_n}
	\begin{equation}
		\label{eq:LLT_prf:I_epsilon_step1}
		I_{\epsilon}
		=
		\int_{-n^{-\frac{1}{2}+\epsilon}}^{n^{-\frac{1}{2}+\epsilon}} d\theta e^{-\rmi \theta y} 
		e^{\rmi \theta \bbH_n'(\lambda) - \frac{\theta^2}{2} \bbH_n''(\lambda)} e^{R(\theta)}
		=
		\frac{1}{\sqrt{n}}\int_{-n^{\epsilon}}^{n^{\epsilon}} d\phi 
		e^{- \frac{\phi^2}{2n} \bbH_n''(\lambda)} e^{R(\phi/\sqrt{n})}
	\end{equation}
	with \(\abs{R(\theta)}\leq n c_r \abs{\theta}^3\), where we used that by choice of \(\lambda\), \(\bbH_n'(\lambda) = y\), and we changed variable to \(\phi = \sqrt{n}\theta\). Then,
	\begin{equation*}
		\Babs{\int_{-n^{\epsilon}}^{n^{\epsilon}} d\phi 
			e^{- \frac{\phi^2}{2n} \bbH_n''(\lambda)} e^{R(\theta/\sqrt{n})}
			-
			\frac{\sqrt{2\pi}}{\sqrt{\bbH_n''(\lambda)/n}}}
		\leq
		\frac{\sqrt{2\pi} c_r e}{\sqrt{\gamma} }n^{-1/3}
		+
		\frac{2}{\gamma} n^{-\epsilon}e^{-\frac{\gamma}{2}n^{2\epsilon}}
	\end{equation*}
	as \(\abs{e^{R(\phi/\sqrt{n})}-1}\leq c_r e n^{-1/3}\) for \(\abs{\phi}\leq n^{\epsilon}\) (recall \(\epsilon = \frac{1}{18}\), and \(n\geq c_r^3\)),
	\(\int_{n^{\epsilon}}^{\infty} d\phi e^{-\frac{\phi^2}{2n}\bbH_n''(\lambda)} \leq \frac{1}{\gamma} n^{-\epsilon}e^{-\frac{\gamma}{2}n^{2\epsilon}} \),
	and \(\int_{-\infty}^{\infty} d\phi e^{- \frac{\phi^2}{2n}\bbH_n''(\lambda)} = \frac{\sqrt{2\pi}}{\sqrt{\bbH_n''(\lambda)/n}} \leq \frac{\sqrt{2\pi}}{\sqrt{\gamma}}\).
	Plugging this in~\eqref{eq:LLT_prf:I_epsilon_step1}, one gets
	\begin{equation}
		\label{eq:LLT_prf:I_epsilon}
		\Babs{I_{\epsilon} - \frac{\sqrt{2\pi}}{\sqrt{\bbH_n''(\lambda)}}} \leq \Cl{cst:LLT_1}n^{-5/6}
	\end{equation}
	where \(\Cr{cst:LLT_1}\) is some constant depending only on \(\gamma, c_r\). Injecting~\eqref{eq:LLT_prf:I_delta},~\eqref{eq:LLT_prf:I_epsilon_delta}, and~\eqref{eq:LLT_prf:I_epsilon} into~\eqref{eq:LLT_prf:Fourier}, one obtains
	\begin{equation}
		\label{eq:LLT_prf:tilted_proba_sharp}
		\Babs{P(S_{n}^{\lambda} = y) - \frac{1}{\sqrt{2\pi \bbH_n''(\lambda)}}}
		\leq \Cl{cst:LLT_2} n^{-5/6},
	\end{equation}
	for some \(\Cr{cst:LLT_2}\) depending only on \(\delta, c_r, \gamma\). Remains to estimate \(\bbH_n''(\lambda)\) and \(\bbH_n(\lambda)-\lambda y\) to go from~\eqref{eq:LLT_prf:tilt} to the theorem. As \(\abs{\lambda}\leq \frac{n^{\alpha}}{\gamma n}\), one has, using~\eqref{eq:boundH3},
	\begin{equation}
		\babs{\bbH_n''(\lambda)- \bbH_n''(0)} \leq 3c_r n \lambda \leq \tfrac{3c_r}{\gamma} n^{\alpha}.
	\end{equation}
	In particular, as \(\bbH_n''(0) = B_n\geq \gamma n\),
	\begin{equation}
		\label{eq:LLT_prf:tilted_to_non_tilted_polynomial_part}
		\Babs{\frac{\bbH_n''(\lambda)}{B_n} -1}
		\leq
		\tfrac{3c_r}{\gamma^2} n^{\alpha-1}
		\leq
		\Cl{cst:LLT_3}n^{-1/3},
	\end{equation}
	where \(\Cr{cst:LLT_3}\) is a constant depending only on \(c_r,\gamma\).
	Then,
	\begin{equation}
		\label{eq:LLT_prf:tilted_to_non_tilted_exp_part_H}
		\abs{\bbH_n(\lambda) - m_n \lambda - \tfrac{\lambda^2}{2} B_n}
		\leq
		\tfrac{c_r}{\gamma^3}n^{3\alpha-2}
	\end{equation}
	as \(\abs{\lambda}\leq \frac{1}{\gamma}n^{\alpha-1}\).
	Next, we have, with \(x= y-m_n\),
	\begin{equation*}
		\tfrac{\lambda^2}{2} B_n - \lambda x
		=
		\tfrac{B_n}{2}(\tfrac{x^2}{B_n^2} + 2 \Delta \tfrac{x}{B_n} + \Delta^2) - \tfrac{x^2}{B_n} - x \Delta
		=
		- \tfrac{x^2}{2B_n} + \tfrac{1}{2}\Delta^2 B_n,
	\end{equation*}
	with \(\Delta = \lambda - \frac{x}{B_n}\).
	Finally, as \(\bbH_n^{'}(\lambda) = y\), we get (again using Cauchy's integral formula to estimate the error term)
	\begin{equation*}
		x
		=
		\int_{0}^{\lambda} ds \bbH_n^{''}(s)
		=
		\lambda \bbH_n^{''}(0) +  \int_{0}^{\lambda} ds R(s) 
	\end{equation*}
	with \(\abs{R(s)}\leq c_r n \abs{s}\). In particular, (recall \(\bbH_n^{''}(0) = B_n\))
	\begin{equation*}
		\abs{\Delta} = \Babs{\frac{x}{B_n} - \lambda} \leq \frac{c_r n}{2B_n} \abs{\lambda}^2 \leq \frac{c_r}{2\gamma^2 } \frac{n^{2\alpha -1}}{B_n}.
	\end{equation*}
	So, \(\abs{\Delta^2 B_n}\leq  \Cl{cst:LLT_4} n^{4\alpha - 3}\) for some \(\Cr{cst:LLT_4}\) depending on \(\gamma, c_r\). Gathering everything from~\eqref{eq:LLT_prf:tilted_to_non_tilted_exp_part_H}, we get
	\begin{equation}
		\label{eq:LLT_prf:tilted_to_non_tilted_exp_part}
		\Babs{\bbH_n(\lambda)-\lambda y + \frac{x^2}{2B_n}}
		\leq
		\Cl{cst:LLT_5} n^{3\alpha-2}
	\end{equation}
	for some \(\Cr{cst:LLT_5}\) depending on \(\gamma,c_r\).
	
	We are now ready to conclude. From~\eqref{eq:LLT_prf:tilted_proba_sharp} and Observation~\ref{LLT:prf:unif_variances}, one has
	\begin{equation}
		\label{eq:LLT_prf:poly_part_final1}
		\babs{P(S_{n}^{\lambda} = y)\sqrt{2\pi \bbH_n''(\lambda)} - 1}
		\leq \Cl{cst:LLT_6} n^{-1/3},
	\end{equation}
	with \(\Cr{cst:LLT_6}\) depending only on \(\sigma_+,c_r,\gamma\). In particular, one has that by~\eqref{eq:LLT_prf:tilted_to_non_tilted_polynomial_part}
	\begin{equation}
		\label{eq:LLT_prf:poly_part_final}
		\babs{P(S_{n}^{\lambda} = y)\sqrt{2\pi B_n} - 1}
		\leq \Cl{cst:LLT_7} n^{-1/3},
	\end{equation}
	for some \(\Cr{cst:LLT_7}\) depending on \(\sigma_+,c_r,\gamma\). Plugging this and~\eqref{eq:LLT_prf:tilted_to_non_tilted_exp_part} in~\eqref{eq:LLT_prf:tilt} gives the claim.
\end{proof}

\section{Small ball for lattice bridges}
\label{sec:small_ball_lattice_bridges}

We now consider walk with steps in \(\calM_{\delta_0,c_0}^a\) and prove a refined version of the small-ball estimates at scale \(\sqrt{n}\) for bridges. We look at the probability that the walk stays close to the convex hull of its endpoints, which are allowed to be at distance \(\gg \sqrt{n}\) from one another. We chose to present the argument divided into several Lemmas, as we believe that the procedure can be useful in estimating more complicated events than small-ball probabilities.

\subsection{Coarse-Graining estimate}
\label{subsec:Gaussian_approx:CG_walks}

\begin{lemma}
	\label{lem:coarse_graining_walk}
	Let \(\delta_0,c_0\in (0,\infty)\), and \(a\in [0,1]^{\Z}\) be irreducible and aperiodic. Let \(\alpha\in [0,2/3)\). Then, there exist \(n_0\geq 1\), \(c',c, \epsilon> 0\), \(C\geq 0\) such that the following holds. For any \(n\geq n_0\), any independent sequence of random variables \(X_1,\dots,X_n\) all having law in \(\calM_{\delta_0,c_0}^a\), any \(x\in \Z-m_n\) with \(\abs{x}\leq n^{\alpha}\), and any \(K\geq 0\),
	\begin{equation*}
		P_0\bigl( \max_{k=1,\dots,n}\rmd(\bar{S}_k,[0,x]) \geq K \bgiven \bar{S}_n = x\bigr)
		\leq
		\begin{cases}
			Cn^{3/2}e^{c'x^2/n}e^{-c K^2/n} & \text{ if } K\leq \epsilon n,
			\\
			Cn^{3/2}e^{c'x^2/n}e^{ -c K} & \text{ if } K\geq \epsilon n,
		\end{cases}
	\end{equation*}
	where \(m_k = E_0(S_k)\).
\end{lemma}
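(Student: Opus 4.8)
The plan is to combine a first-exit decomposition with the large-deviation bound of Lemma~\ref{lem:very_large_tails_UB} and the lower bound coming from the inhomogeneous local limit theorem (Theorem~\ref{thm:inhomo_LLT}). Write \(P\equiv P_0\), so that \(\bar S_k=\bar S_{1,k}\), \(\bar S_n=S_n-m_n\), and \(\bar S_n-\bar S_k=\bar S_{k+1,n}\) is a sum of independent centred increments. Since \(|x|\le n^{\alpha}\) and \(a\) is irreducible and aperiodic, Theorem~\ref{thm:inhomo_LLT} applies at the point \(y=x+m_n\in\Z\) and yields, for \(n\) large enough,
\[
	P(\bar S_n=x)=P(S_n=y)\ \ge\ \tfrac{1}{2\sqrt{2\pi B_n}}\,e^{-x^2/2B_n}\ \ge\ \tfrac{c}{\sqrt n}\,e^{-c'x^2/n},
\]
where we used \(c_1 n\le B_n\le c_2 n\) for constants depending only on \(\delta_0,c_0,a\). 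In particular the conditional probability is well defined, and it suffices to bound the joint probability \(P\bigl(\max_{k\le n}\rmd(\bar S_k,[0,x])\ge K,\ \bar S_n=x\bigr)\) by \(\tfrac{C}{\sqrt n}\Psi(K)\), where \(\Psi(K)=e^{-cK^2/n}\) for \(K\le\epsilon n\) and \(\Psi(K)=e^{-cK}\) for \(K>\epsilon n\).

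Set \(T=\min\{k\ge 1:\rmd(\bar S_k,[0,x])\ge K\}\), so that \(\{\max_{k\le n}\rmd(\bar S_k,[0,x])\ge K\}=\{T\le n\}\). Decomposing over the value of \(T\), and using that \(\bar S_{k+1,n}\) is independent of \(\calF_k=\sigma(X_1,\dots,X_k)\) while \(\{T=k\}\) and \(\bar S_k\) are \(\calF_k\)-measurable,
\[
	P\bigl(T\le n,\ \bar S_n=x\bigr)=\sum_{k=1}^{n}E\bigl[\mathds 1_{\{T=k\}}\,g_k(\bar S_k)\bigr],
	\qquad g_k(z):=P\bigl(\bar S_{k+1,n}=x-z\bigr).
\]
The crucial observation is that on \(\{T=k\}\) one has \(|\bar S_k-x|\ge\rmd(\bar S_k,[0,x])\ge K\), simply because \(x\in[0,x]\); in particular no control on the overshoot of the exit is needed. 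Hence, for \(w=x-\bar S_k\) (so \(|w|\ge K\)), the crude bound \(P(\bar S_{k+1,n}=w)\le\max\bigl(P(\bar S_{k+1,n}\ge K),\,P(\bar S_{k+1,n}\le -K)\bigr)\), combined with Lemma~\ref{lem:very_large_tails_UB} applied to the length-\((n-k)\) independent sequences \((\bar X_i)_{i=k+1}^{n}\) and \((-\bar X_i)_{i=k+1}^{n}\) (which satisfy its hypotheses with constants depending only on \(\delta_0,c_0,a\), since \(E(\bar X_i^2)=\Var(X_i)\ge c_1>0\) and \(E(e^{z\bar X_i})\) is bounded for \(|z|\le\delta_0\)), gives
\[
	g_k(\bar S_k)\ \le\
	\begin{cases}
		e^{-cK^2/(n-k)} & \text{if } K\le\rho(n-k),\\
		e^{-cK} & \text{if } K>\rho(n-k),
	\end{cases}
\]
with \(c,\rho\) from Lemma~\ref{lem:very_large_tails_UB}. (For \(K>0\) the term \(k=n\) vanishes, since \(\rmd(x,[0,x])=0\).)

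Finally, since \(n-k\le n\), a short case analysis — bounding the right-hand side above by \(e^{-cK^2/n}\) when \(K\le\epsilon n\), and by \(e^{-cK}\) when \(K>\epsilon n\) (using \(K^2/n\ge\epsilon K\) in that regime) — shows, after fixing \(\epsilon\) (say \(\epsilon=\tfrac12\)) and replacing \(c\) by \(c\epsilon\), that \(g_k(\bar S_k)\le\Psi(K)\) on \(\{T=k\}\) for every \(k\le n\). Therefore \(\sum_{k}E[\mathds 1_{\{T=k\}}g_k(\bar S_k)]\le\Psi(K)\,P(T\le n)\le\Psi(K)\), and dividing by the lower bound on \(P(\bar S_n=x)\) gives
\[
	P\bigl(\max_{k\le n}\rmd(\bar S_k,[0,x])\ge K\bgiven\bar S_n=x\bigr)\ \le\ \tfrac{\sqrt n}{c}\,e^{c'x^2/n}\,\Psi(K)\ \le\ Cn^{3/2}e^{c'x^2/n}\Psi(K),
\]
which is the stated bound (in fact with \(\sqrt n\) in place of \(n^{3/2}\)). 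The only points requiring care are the bookkeeping that turns the \(k\)-dependent estimate into a single two-regime bound uniform in the exit time, and feeding the correct exponential-moment control for \(\bar X_i\) rather than \(X_i\) into Lemma~\ref{lem:very_large_tails_UB}; the conceptual core is simply that the first exit point lies automatically at distance \(\ge K\) from the conditioned endpoint \(x\), which removes any need for overshoot estimates.
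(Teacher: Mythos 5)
Your proof is correct and in fact takes a genuinely different route that yields a tighter bound than the paper's. Both arguments rely on the same two tools: Theorem~\ref{thm:inhomo_LLT} to lower-bound the denominator \(P(\bar S_n = x)\) by \(c\,n^{-1/2}e^{-c'x^2/n}\), and Lemma~\ref{lem:very_large_tails_UB} for the tail estimate. The difference is in how the numerator \(P\bigl(\max_{k\le n}\rmd(\bar S_k,[0,x])\ge K,\ \bar S_n=x\bigr)\) is handled. The paper drops the endpoint constraint and applies a union bound over the times \(k\): for \(x\ge 0\), on \(\{\bar S_n = x\}\) the event \(\rmd(\bar S_k,[0,x])\ge K\) is contained in \(\{\bar S_k\le -K\}\cup\{\bar S_n-\bar S_k\le -K\}\), and summing the \(2n\) resulting unconditional tail probabilities — each bounded by \(\Psi(K)\) via Lemma~\ref{lem:very_large_tails_UB} — produces the factor \(n\) that is the source of the \(n^{3/2}\) in the statement. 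You instead keep the endpoint constraint and decompose at the first exit time \(T\), using the Markov property and the key observation that \(|\bar S_T - x|\ge\rmd(\bar S_T,[0,x])\ge K\) because \(x\) is itself an endpoint of the interval. This replaces the sum of \(n\) tail probabilities by \(\sum_k E[\mathds 1_{T=k}\,g_k(\bar S_k)]\le\Psi(K)\sum_k P(T=k)\le\Psi(K)\), which improves the prefactor from \(n^{3/2}\) to \(\sqrt n\); the stated lemma then follows a fortiori. Your bookkeeping — converting the \(k\)-dependent threshold \(\rho(n-k)\) into the \(k\)-uniform two-regime bound \(\Psi(K)\), and noting that Lemma~\ref{lem:very_large_tails_UB} applies to the centred increments \(\bar X_i\) with constants depending only on \((\delta_0,c_0,a)\) — is handled correctly, as is the disposal of the \(k=n\) term for \(K>0\).
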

\begin{proof}
	Use the shorthand \(P\equiv P_0\). Consider the case \(x\geq 0\). \(x\leq 0\) is treated in the exact same way. We start with a union bound:
	\begin{multline*}
		P\bigl( \max_{k=1,\dots,n}\rmd(\bar{S}_k,[0,x]) \geq K \bgiven \bar{S}_n = x\bigr)
		\\
		\leq
		\frac{1}{P(S_n = x+m_n)} \sum_{k=1}^{n} P\bigl( \{\bar{S}_k \leq -K\}\cup \{\bar{S}_n-\bar{S}_k \leq -K\} \bigr).
	\end{multline*}
	Then, one has from Theorem~\ref{thm:inhomo_LLT} that there are \(c>0,n_0'\geq 1\) uniform over the sequence \(X_1,\dots,X_n\) such that, if \(n\geq n_0'\),
	\begin{equation*}
		P(S_n=x+m_n)
		\geq
		\frac{c}{\sqrt{n}} e^{-x^2/B_n},
	\end{equation*}
	where \(B_k = \Var(S_k)\). Then,
	\begin{equation*}
		P\bigl( \{\bar{S}_k \leq -K\}\cup \{\bar{S}_n-\bar{S}_k \leq -K\} \bigr)
		\leq
		P( \bar{S}_k \leq -K) +P\bigl(\bar{S}_n-\bar{S}_k \leq -K \bigr).
	\end{equation*}
	To conclude the proof, use that, by Lemma~\ref{lem:very_large_tails_UB}, for any \(1\leq k\leq n\), and any sequence \(Y_1,\dots,Y_k\) with laws in \(\calM_{\delta_0,c_0}^a\)
	\begin{equation*}
		P\bigl( (\nu_1-Y_1) + \dots + (\nu_k-Y_k) \geq K\bigr)
		\leq
		\begin{cases}
			e^{-c' K^2/n} & \text{ if } K\leq \epsilon n,
			\\
			e^{-c' K} & \text{ if } K\geq \epsilon n,
		\end{cases}
	\end{equation*}
	where \(\nu_i = E(Y_i)\), and \(\epsilon,c'>0\) depend only on \(\delta_0,c_0,a\).
\end{proof}

\subsection{Gaussian Swapping}
\label{subsec:Gaussian_approx:Gaussian_swap}

We then prove an ``approximation by Gaussian'' result. For \(\alpha\geq 0\), \(n\geq 1 \), \(0=L_0<L_1<\dots<L_m = n\) introduce
\begin{equation}
	\label{eq:def:CtrlGrad}
	\ControlGrad_{n}^{\alpha}(L_0,\dots,L_m)
	=
	\big\{x\in \R^{n+1}:\ |x_{L_i}-x_{L_{i-1}}|\leq |L_i-L_{i-1}|^{\alpha}, i=1,\dots, m \big\}.
\end{equation}
\begin{lemma}
	\label{lem:Gaussian_approx_walk}
	With the same setup as Lemma~\ref{lem:coarse_graining_walk}, for any \(\alpha\in [0,2/3)\), there are \(n_0,c>0\), such that for any sequence \(X_1,\dots,X_n\) with laws in \(\calM_{\delta_0,c_0}^{a}\) the following holds. For any \(l\geq 1\), any \(L_0=0<L_1<\dots<L_l = n\) with
	\begin{equation*}
		L_i-L_{i-1}\geq n_0,\ i=1,\dots, l,
	\end{equation*}
	and any \(\epsilon_1,\dots \epsilon_m\in (0,1]\), one has that for every sets \(I_1\subset \Z -m_{L_1}, \dots,I_l \subset\Z-m_{L_l}\),
	\begin{align*}
		&\prod_{i=1}^l \tfrac{1}{\epsilon_i} e^{-c(L_i-L_{i-1})^{-\beta}}P\Bigl( \cap_{i=1}^{l}\{Z_{L_i}\in \bbI_i\} \cap \big\{Z\in\ControlGrad_{n}^{\alpha}(L_0,\dots,L_l) \big\} \Bigr)
		\\
		&\quad\leq
		P_0\Bigl( \cap_{i=1}^{l}\{\bar{S}_{L_i}\in I_i\} \cap \big\{\bar{S}\in\ControlGrad_{n}^{\alpha}(L_0,\dots,L_l) \big\} \Bigr)
		\\
		&\quad\quad\leq
		\prod_{i=1}^l \tfrac{1}{\epsilon_i} e^{c(L_i-L_{i-1})^{-\beta}} P\Bigl( \cap_{i=1}^{l}\{Z_{L_i}\in \bbI_i\} \cap \big\{Z\in\ControlGrad_{n}^{\alpha}(L_0,\dots,L_l) \big\} \Bigr)
	\end{align*}
	where \(\beta = \min(2-3\alpha,1/3)\), \(\bbI_i = \cup_{x\in I_i}[x-\epsilon_i/2,x+\epsilon_i/2]\), and \(Z\) is a random walk with independent increments of law \(Z_{k} -Z_{k-1}\sim \calN(0,\Var(X_k))\).
\end{lemma}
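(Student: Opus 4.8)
The statement compares a discrete inhomogeneous walk $\bar S$ against a Gaussian walk $Z$ with matching variances, evaluated on the event that certain coarse increments land in prescribed sets $I_i$ (or their $\epsilon_i$-inflations $\mathbb I_i$) while all coarse increments stay gradient-controlled via $\ControlGrad_n^\alpha$. The natural approach is to decompose over the values of the walk at the marker times $L_0,\dots,L_l$ and to apply the local limit theorem (Theorem~\ref{thm:inhomo_LLT}) on each block $[L_{i-1},L_i]$ separately. Concretely, by the Markov property,
\[
	P_0\Bigl( \cap_{i=1}^{l}\{\bar S_{L_i}\in I_i\} \cap \{\bar S\in\ControlGrad_n^\alpha\} \Bigr)
	=
	\sum_{y_1\in I_1}\cdots\sum_{y_l\in I_l}
	\prod_{i=1}^l P\bigl(\bar S_{L_{i-1},L_i} = y_i - y_{i-1}\bigr)\,\mathds 1_{|y_i-y_{i-1}|\leq (L_i-L_{i-1})^\alpha},
\]
with $y_0=0$, and similarly the Gaussian side is $\sum \prod_i \int_{[y_i-\epsilon_i/2,\,y_i+\epsilon_i/2]} (\text{density of }Z_{L_{i-1},L_i}\text{ at }z-y_{i-1})\,dz$, restricted to the same gradient-control indicator. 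So it suffices to compare, block by block, the discrete probability $P(\bar S_{L_{i-1},L_i}=w)$ with $\epsilon_i$ times the Gaussian density of $Z_{L_{i-1},L_i}$ at $w$, uniformly over all relevant $w$ (those with $|w|\leq (L_i-L_{i-1})^\alpha$), and then multiply these per-block ratio bounds together.

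First I would fix a block $[L_{i-1},L_i]$, set $N = L_i - L_{i-1}\geq n_0$ and $w$ with $|w|\leq N^\alpha$. Theorem~\ref{thm:inhomo_LLT} (applied to the increments $X_{L_{i-1}+1},\dots,X_{L_i}$, all of law in $\calM^a_{\delta_0,c_0}$, noting $\bar S_{L_{i-1},L_i} = w$ corresponds to $S_{L_{i-1},L_i} = w + E(S_{L_{i-1},L_i})$ which is within $N^\alpha$ of its mean) gives
\[
	e^{-CN^{-\beta}} \leq \sqrt{2\pi V}\, e^{w^2/2V}\, P(\bar S_{L_{i-1},L_i}=w) \leq e^{CN^{-\beta}},
\]
where $V = \sum_{j=L_{i-1}+1}^{L_i}\Var(X_j)$ and $\beta = \min(2-3\alpha,1/3)$, exactly as in the theorem. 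On the other hand $Z_{L_{i-1},L_i}\sim\calN(0,V)$ has density $g(z) = (2\pi V)^{-1/2}e^{-z^2/2V}$, and for $z$ in an interval of length $\epsilon_i\leq 1$ centred at $w$ with $|w|\leq N^\alpha$, a trivial Taylor/monotonicity estimate gives $\epsilon_i g(w) e^{-c' N^{-\beta}} \leq \int_{[w-\epsilon_i/2,w+\epsilon_i/2]} g(z-\text{(shift)})\,dz$ — actually the cleanest route is: the Gaussian density varies by a factor at most $e^{O(|w|\epsilon_i/V + \epsilon_i^2/V)} = e^{O(N^{\alpha-1})} = e^{O(N^{-\beta})}$ across that interval (since $V\geq \gamma N$ and $\beta\leq 1-\alpha$ when $\alpha<2/3$, as $\beta = \min(2-3\alpha, 1/3) \leq 1-\alpha$ can be checked directly), so the integral is $\epsilon_i g(w)$ up to a factor $e^{\pm c'' N^{-\beta}}$. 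Combining the two displays, the discrete block-probability equals $\epsilon_i^{-1}$ times the Gaussian block-integral up to a multiplicative factor $e^{\pm (C+c'+c'')N^{-\beta}}$, which is the per-block statement with a suitable constant $c$.

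To finish, I would feed these per-block comparisons into the sum-of-products decomposition above: since every summand on the discrete side is bounded above and below by $\prod_i \epsilon_i^{-1} e^{\pm cN_i^{-\beta}}$ times the matching Gaussian summand (with identical gradient-control indicator $\mathds 1_{|y_i-y_{i-1}|\leq N_i^\alpha}$, where $N_i = L_i-L_{i-1}$), the inequalities survive summation over $y_1\in I_1,\dots,y_l\in I_l$, and we obtain exactly the claimed two-sided bound with $\prod_i \tfrac1{\epsilon_i}e^{\pm c(L_i-L_{i-1})^{-\beta}}$. The main obstacle — and the only place any real care is needed — is the uniformity over the block lengths $N_i$: Theorem~\ref{thm:inhomo_LLT}'s $n_0$ and $C$ depend only on $\delta_0,c_0,a,\alpha$, so the hypothesis $L_i-L_{i-1}\geq n_0$ is precisely what licenses applying it on every block with a single constant; and the $\mathbb I_i$ versus $I_i$ bookkeeping (the Gaussian event must be phrased with the inflated intervals $\mathbb I_i$ so that the discrete atom at $y_i$ is compared against the Gaussian mass on $[y_i-\epsilon_i/2,y_i+\epsilon_i/2]$) must be tracked so that the $\ControlGrad_n^\alpha$ constraint is imposed consistently on both sides — here one uses that $|y_i-y_{i-1}|\leq N_i^\alpha$ and $\epsilon_i\leq 1$ together keep the Gaussian interval within the regime where Theorem~\ref{thm:inhomo_LLT} and the density-variation estimate apply (possibly after enlarging $n_0$ so that $N_i^\alpha + 1 \leq N_i^{\alpha'}$ for some $\alpha'<2/3$, or simply absorbing the $+\tfrac12\epsilon_i\leq\tfrac12$ slack into the radius).
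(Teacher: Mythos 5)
Your strategy coincides with the paper's: decompose over the lattice path $(y_1,\dots,y_l)$ at the marker times, apply Theorem~\ref{thm:inhomo_LLT} on each block $[L_{i-1},L_i]$, and convert the point probability to $\epsilon_i^{-1}$ times the Gaussian mass of a length-$\epsilon_i$ interval via the elementary density/integral comparison. That is the paper's route, so there is no disagreement of approach. There is, however, one step you state as an identity that is not one, and you should be aware of it (the paper's own proof is equally terse on this point).

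You write that ``the Gaussian side is $\sum\prod_i\int_{[y_i-\epsilon_i/2,y_i+\epsilon_i/2]}(\text{density of }Z_{L_{i-1},L_i}\text{ at }z-y_{i-1})\,dz$, restricted to the same gradient-control indicator.'' This is \emph{not} equal to $P\bigl(\cap_i\{Z_{L_i}\in\bbI_i\}\cap\{Z\in\ControlGrad_n^\alpha\}\bigr)$. The event in the lemma constrains the Gaussian \emph{positions} $Z_{L_i}$ to intervals, so in the correct decomposition one integrates over all $z_i\in[y_i\pm\epsilon_i/2]$ with density $\prod_i g_i(z_i-z_{i-1})$ and indicator $\mathds 1_{|z_i-z_{i-1}|\leq N_i^\alpha}$ depending on the $z$'s; the blocks do not factor, because the increment $z_i-z_{i-1}$ depends on the actual Gaussian location $z_{i-1}$, not on the lattice anchor $y_{i-1}$. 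Your formula is instead the probability of the \emph{increment-based} event $\cap_i\{Z_{L_i}-Z_{L_{i-1}}\in[y_i-y_{i-1}\pm\epsilon_i/2]\}$, which is a different set. Two things then need to be argued. The density mismatch is genuinely handled by the Taylor/variation estimate you sketch: over the box $\prod_i[y_i\pm\epsilon_i/2]$ the ratio $g_i(z_i-z_{i-1})/g_i(y_i-y_{i-1})$ stays within $e^{\pm cN_i^{-\beta}}$ using $|z_i-z_{i-1}-(y_i-y_{i-1})|\leq 1$, $|y_i-y_{i-1}|\leq N_i^\alpha$, $V_i\geq\gamma N_i$ and $\beta\leq 1-\alpha$, so that piece of the factorization holds up to the allowed error. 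But the $\ControlGrad_n^\alpha$ indicator is a hard cutoff: for $y$'s with $|y_i-y_{i-1}|$ within $O(1)$ of $N_i^\alpha$, the indicator $\mathds 1_{|z_i-z_{i-1}|\leq N_i^\alpha}$ is neither identically $0$ nor identically $1$ on the box, while $\mathds 1_{\ControlGrad_n^\alpha(y)}$ is a definite $0$ or $1$; an $e^{\pm cN_i^{-\beta}}$ factor cannot absorb a constant-size discrepancy on those terms. Your suggested fixes (``enlarging $n_0$'', ``absorbing the $\tfrac12\epsilon_i$ slack into the radius'') target the LLT's admissible range for $y$, not this cutoff mismatch, and so do not close it. For $\alpha>1/2$ the boundary layer carries exponentially small Gaussian mass and the inequality for the \emph{sums} survives, which covers the paper's actual applications (where $\alpha'\in(1/2,2/3)$); for $\alpha\leq 1/2$ the term-by-term argument is no longer clearly salvageable and a separate estimate on the shell contribution would be required. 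In short: same approach as the paper, but the claimed factorization of the Gaussian side should be replaced by a two-sided approximation (via the density-variation bound) together with an explicit estimate showing the $\ControlGrad$ boundary layer is negligible after summation.
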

The \(\epsilon_i\)'s allow to take limits to deal with measures conditioned on the value of \(S_n, Z_n\).
\begin{figure}
	\centering
	\includegraphics{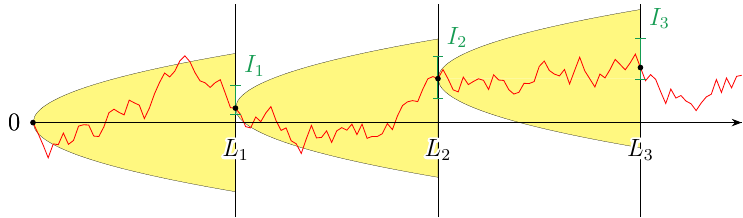}
	\caption{Construction in the proof of Lemma~\ref{lem:Gaussian_approx_walk}: The increments between times \(L_k\) and \(L_{k+1}\) have to lie in the yellow region.}
\end{figure}
\begin{proof}
	Let \(\alpha\in [0,2/3)\) and set \(\beta = \min(2-3\alpha,1/3)\). The claim follows straightforwardly from the Local Limit Theorem~\ref{thm:inhomo_LLT}, and the observation that for any \(A>0\), \(\epsilon>0\), \(x\in \R\),
	\begin{equation*}
		\frac{1}{\epsilon} e^{-\epsilon |x|/(2A)}\int_{x-\epsilon/2}^{x+\epsilon/2}du e^{-\frac{u^2}{2A}}
		\leq
		e^{-\frac{x^2}{2A}}
		\leq
		\frac{1}{\epsilon} e^{\epsilon |x|/(2A) + \epsilon^2/(8A)} \int_{x-\epsilon/2}^{x+\epsilon/2}du e^{-\frac{u^2}{2A}}.
	\end{equation*}
	Indeed, the constraint on the increments of \((\bar{S}_{L_0},\bar{S}_{L_1},\dots,\bar{S}_{L_l})\) gives that one can approximate the transitions \(P(\bar{S}_{L_i} = y \given \bar{S}_{L_{i-1}} = x)\), using Theorem~\ref{thm:inhomo_LLT}, by the probability for a Gaussian with variance \(\sum_{k=L_{i-1}+1}^{L_{i}} \Var(X_k)\) to fall into the interval \([y-x-\epsilon_i/2,y-x+\epsilon_i/2]\) up to an error factor lower, respectively upper, bounded by
	\begin{gather*}
		\tfrac{1}{\epsilon_i}e^{-c(L_i-L_{i-1})^{-\beta} - c'|x-y|\epsilon_i/(L_i-L_{i-1})}
		\geq
		\tfrac{1}{\epsilon_i}e^{-\tilde{c}(L_i-L_{i-1})^{-\beta}},
		\\
		\tfrac{1}{\epsilon_i}e^{c(L_i-L_{i-1})^{-\beta} + c'|x-y|\epsilon_i/(L_i-L_{i-1}) + c''\epsilon_i^2/(L_i-L_{i-1})}
		\leq
		\tfrac{1}{\epsilon_i}e^{\tilde{c}(L_i-L_{i-1})^{-\beta} },
	\end{gather*}
	as, by assumption, \(|x-y|\leq (L_{i}-L_{i-1})^{\alpha}\leq (L_{i}-L_{i-1})^{2/3}\).
\end{proof}

\subsection{Small ball estimates: Gaussian case}
\label{subsec:Gaussian_approx:Gaussian_walks}

\begin{theorem}
	\label{thm:Gaussian:small_ball_bridge}
	Let \(\sigma_+\in (0,+\infty)\). Then, for any \(n\geq 1\), any \(s>0\), any \(x\in \R\), any \(\sigma_1,\dots,\sigma_n\in [0,\sigma_+]\), and independent sequence \(X_1,\dots,X_n\) with \(X_i\sim \calN(0,\sigma_i^2)\),
	\begin{equation*}
		P_0\bigl(\max_{i=1,\dots,n} \rmd(S_i,[0,x]) \leq s\sqrt{n} \bgiven S_n= x \bigr)
		\geq
		\ThetaJac(s/\sigma_+)
	\end{equation*}
	where \(\ThetaJac(z) = \sum_{k\in \Z}(-1)^{k}e^{-2z^2k^2}>0\) for \(z>0\) is a Jacobi Theta Function. In particular, for any \(\epsilon>0\), there is \(z_{\epsilon}>0\) such that for any \(z\in (0,z_{\epsilon})\),
	\begin{equation*}
		\ThetaJac(z) \geq \exp(-(1+\epsilon)\tfrac{\pi^2}{8z^2}).
	\end{equation*}
\end{theorem}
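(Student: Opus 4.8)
The plan is to reduce to a Brownian bridge. The first step is to use that the increments are independent centred Gaussians to embed the walk into a standard Brownian motion $W$: setting $T_i=\sum_{k\le i}\sigma_k^2$, one may realize $S_i=W_{T_i}$, with $T_n=B_n:=\sum_{i=1}^n\sigma_i^2$, and crucially $B_n\le n\sigma_+^2$. The case $B_n=0$ is trivial (the walk is constant and the left-hand side is $1$), so assume $B_n>0$. Conditioning on $S_n=W_{B_n}=x$ then turns $(S_i)_{i=0}^n$ into the sample, at the times $T_0,\dots,T_n\in[0,B_n]$, of a Brownian bridge $\beta$ from $0$ to $x$ over $[0,B_n]$. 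Since the event in question is only weakened by passing from the finite set of times $\{T_i\}$ to all of $[0,B_n]$, it suffices to lower bound $P\bigl(\sup_{t\in[0,B_n]}\rmd(\beta_t,[0,x])\le s\sqrt n\bigr)$.

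Next I would compare $\beta$ with a \emph{centred} bridge. By the symmetry $\beta\mapsto-\beta$ we may take $x\ge0$, and then write $\beta_t=\tfrac{t}{B_n}x+\tilde\beta_t$ with $\tilde\beta$ a standard bridge from $0$ to $0$ over $[0,B_n]$. The point is that, because $x\ge0$ and $0\le t/B_n\le1$, the tube $\{\rmd(\,\cdot\,,[0,x])\le s\sqrt n\}$ around the chord $[0,x]$ always contains the symmetric tube $\{|\,\cdot\,|\le s\sqrt n\}$ around the chord of $\tilde\beta$, uniformly in $t$; hence $\{\sup_t\rmd(\beta_t,[0,x])\le s\sqrt n\}\supseteq\{\sup_t|\tilde\beta_t|\le s\sqrt n\}$. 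By Brownian scaling $\sup_{[0,B_n]}|\tilde\beta|$ has the law of $\sqrt{B_n}$ times $\sup_{[0,1]}|b|$ for a standard bridge $b$ on $[0,1]$, and the classical (Kolmogorov) formula gives $P(\sup_{[0,1]}|b|\le h)=\sum_{k\in\Z}(-1)^k e^{-2k^2h^2}=\ThetaJac(h)$. So the probability is $\ge\ThetaJac\bigl(s\sqrt n/\sqrt{B_n}\bigr)$, and since $\ThetaJac$ is nondecreasing (being a distribution function) and $s\sqrt n/\sqrt{B_n}\ge s/\sigma_+$ by $B_n\le n\sigma_+^2$, this is $\ge\ThetaJac(s/\sigma_+)$, which is the first assertion.

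For the ``in particular'' clause I would invoke the Jacobi imaginary transformation of the theta series, which rewrites $\ThetaJac(z)=\tfrac{\sqrt{2\pi}}{z}\sum_{k\ge0}e^{-(2k+1)^2\pi^2/(8z^2)}$; this is a sum of positive terms, so $\ThetaJac(z)>0$ for every $z>0$, and keeping only $k=0$ yields $\ThetaJac(z)\ge\tfrac{\sqrt{2\pi}}{z}e^{-\pi^2/(8z^2)}\ge e^{-\pi^2/(8z^2)}\ge e^{-(1+\epsilon)\pi^2/(8z^2)}$ as soon as $z\le\sqrt{2\pi}$, so one may take $z_\epsilon=\sqrt{2\pi}$.

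The argument is essentially a reduction, so I do not expect a genuine obstacle; the ``hard'' content is entirely classical and external, namely the Kolmogorov formula $P(\sup_{[0,1]}|b|\le h)=\ThetaJac(h)$ (provable by the method of images for the heat equation on an interval, i.e. summing the infinite family of reflected Gaussians, or by iterating the reflection principle) and the Jacobi theta transformation used for the last part. The one step where I would be careful is the tube containment in the second paragraph, since the slab around $[0,x]$ is asymmetric: writing out the two inequalities $-s\sqrt n-\tfrac{t}{B_n}x\le\tilde\beta_t$ and $\tilde\beta_t\le s\sqrt n+(1-\tfrac{t}{B_n})x$ and checking that each is implied by $|\tilde\beta_t|\le s\sqrt n$ when $x\ge0$ settles it.
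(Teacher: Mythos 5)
Your proof is correct and follows essentially the same route as the paper: reduce to a centred Brownian bridge via the mean-shift decomposition, use the tube-containment observation that the asymmetric slab around the chord $[0,x]$ contains the symmetric slab once the drift $b_t x$ is removed, pass to the continuous supremum, and invoke the Kolmogorov formula for $P(\sup_{[0,1]}|b|\le h)=\ThetaJac(h)$. Your treatment of the ``in particular'' clause via the Jacobi imaginary transformation is slightly more self-contained than the paper's, which cites the limit $\lim_{z\to0^+}z^2\ln\ThetaJac(z)=-\pi^2/8$ from Li--Shao, and it yields the explicit threshold $z_\epsilon=\sqrt{2\pi}$ as well as the positivity $\ThetaJac(z)>0$ for free.
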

\begin{proof}
	Let \(\sigma_+>0\). Let \(n\), \(x\), \(s\), \(\sigma_1,\dots,\sigma_n\) and \(X_1,\dots, X_n\) be as in the statement. Let \(x_- = 0\wedge x\), \(x_+ = 0\vee x\). Denote \(P_{0,x}=P_0(\cdot \given S_n = x)\), and \(E_{0,x}\) the associated expectation. Recall that \(B_k = \sum_{i=1}^k \sigma_i^2\), and let \(b_k = \frac{B_k}{B_n}\).
	Under \(P_{0,x}\), the vector \((S_0,S_1,\dots, S_n)\) is a Gaussian vector with mean and covariance, for \(0\leq i\leq j\leq n\),
	\begin{equation*}
		E_{0,x}(S_i ) = b_i x,
		\quad
		E_{0,x}\bigl((S_i-E_x(S_i))(S_j-E_x(S_j))\bigr) =  \frac{B_{i}(B_n -B_j)}{B_n},
	\end{equation*}
	where we used the formula for the conditional distribution of Gaussian vectors. In particular, \((S_0,\dots, S_n) \stackrel{\mathsf{law}}{=} (\phi_0, \phi_1 + b_1 x,\dots, \phi_n + x)\) where \((\phi_0 ,\dots, \phi_n)\) is a centred Gaussian vector with the same covariance as \(S\). The probability we want to estimate is then given by
	\begin{equation*}
		P\bigl( \phi_i + b_i x \in [x_- - s\sqrt{n}, x_+ + s\sqrt{n}] \ \forall i=0,\dots, n  \bigr).
	\end{equation*}
	But now, as \(0\leq b_i\leq 1\), one has
	\begin{equation*}
		\{\phi_i + b_i x \in [x_- - s\sqrt{n}, x_+ + s\sqrt{n}]\}
		\supset
		\{\phi_i \in [- s\sqrt{n}, s\sqrt{n}]\}.
	\end{equation*}
	In particular,
	\begin{multline*}
		P\bigl( \phi_i + b_i x \in [x_- - s\sqrt{n}, x_+ + s\sqrt{n}] \ \forall i=0,\dots, n  \bigr)
		\geq
		P\bigl( \abs{\phi_i} \leq s\sqrt{n} \ \forall i=0,\dots, n  \bigr).
	\end{multline*}
	Then, note that
	\begin{equation*}
		\frac{1}{\sqrt{B_n}}(\phi_0,\phi_1,\dots,\phi_n)
		\stackrel{\mathsf{law}}{=}
		(\BrownBri_{b_0}, \BrownBri_{b_1},\dots, \BrownBri_{b_n})
	\end{equation*}
	where \((\BrownBri_t)_{t\in [0,1]}\) is a standard Brownian Bridge. We get
	\begin{equation*}
		P\bigl( \abs{\phi_i} \leq s\sqrt{n} \ \forall i=0,\dots, n  \bigr)
		\geq
		P\Bigl(\sup_{t\in [0,1]} \abs{\BrownBri_t} \leq \frac{s}{\sigma_+}\Bigr),
	\end{equation*}
	since \(B_n \leq \sigma_+^2 n\) by our hypotheses on the \(\sigma_i\)'s. Finally, for any \(z>0\),
	\begin{equation*}
		P\Bigl(\sup_{t\in [0,1]} \abs{\BrownBri_t} \leq z\Bigr)
		=
		1 + 2\sum_{k=1}^{\infty} (-1)^k e^{-2k^2 z^2}
	\end{equation*}
	see~\cite[Equation (11.39)]{Billingsley-1968}. The asymptotic of this quantity as \(z\searrow 0\) is given by
	\begin{equation*}
		\lim_{z\to 0^+} z^2 \ln\Bigl(P\Bigl(\sup_{t\in [0,1]} \abs{\BrownBri_t} \leq z\Bigr)\Bigr) = -\frac{\pi^2}{8},
	\end{equation*}
	see~\cite[Theorem 6.3]{Li+Shao-2001} and references therein.
\end{proof}

\subsection{Small ball estimates: lattice case}
\label{subsec:Gaussian_approx:lattice_walks}

We start with the equivalent of Theorem~\ref{thm:Gaussian:small_ball_bridge} for walks.
\begin{theorem}
	\label{thm:Z_bridge:small_ball}
	Let \(\delta_0,c_0\in (0,+\infty)\), and \(a\in [0,1]^{\Z}\) be irreducible and aperiodic. Let \(s_0>0\), \(\alpha\in [0,2/3)\). There are \(c,c'>0, C\geq 0\), \(n_0\geq 1\), and \(\delta = \delta(\alpha)>0\) such that the following holds. For any \(s\geq s_0\), \(n\geq n_0\), any \(X_1,\dots,X_n\) independent sequence with laws in \(\calM_{\delta_0,c_0}^a\), and any \(x\in \Z-m_n\) with \(\abs{x}\leq n^{\alpha}\),
	\begin{equation*}
		P_0\bigl(\cap_{i=1}^{n} \bigl\{\rmd(\bar{S}_i,[0,x])\leq s\sqrt{n}\bigr\},\ \bar{S}_n = x\bigr)
		\geq
		\bigl(1-\tfrac{c}{n^{\delta}}\bigr)\ThetaJac( c' s)\frac{1}{\sqrt{2\pi B_n}}e^{-x^2/2B_n},
	\end{equation*}
	where \(m_n = E_0(S_n)\), and \(\ThetaJac\) is given in Theorem~\ref{thm:Gaussian:small_ball_bridge}.
\end{theorem}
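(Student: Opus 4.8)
The plan is to coarse-grain time, use the Gaussian-swapping Lemma~\ref{lem:Gaussian_approx_walk} to replace the walk $\bar S$ by the matching Gaussian walk $Z$ (with $\Var(Z_k-Z_{k-1})=\Var(X_k)$), apply the Gaussian small-ball bound of Theorem~\ref{thm:Gaussian:small_ball_bridge}, and absorb the transfer error using the Local Limit Theorem~\ref{thm:inhomo_LLT}. Fix an auxiliary exponent $\alpha'\in(1/2,2/3)$, put $\beta=\min(2-3\alpha',1/3)>0$, and then choose $\delta=\delta(\alpha)>0$ small, a block exponent $\gamma\in(0,1)$ with $\tfrac{1+\delta}{1+\beta}<\gamma<\tfrac1{2\alpha'}$, a block length $\ell_0=\lfloor n^\gamma\rfloor$, and a partition $0=L_0<\dots<L_l=n$ into $l\asymp n/\ell_0$ blocks with $L_i-L_{i-1}\asymp\ell_0$. (One must pick $\alpha'$ — above $\alpha$ for small $\alpha$, below $\alpha$ for $\alpha$ near $2/3$ — and $\gamma,\delta$ so that moreover $n^{\alpha-1}\ell_0=o(\ell_0^{\alpha'})$; this system is solvable exactly because $\alpha<2/3$.) With these choices every error factor below will be either $1-O(n^{-\delta})$ or superpolynomially close to $1$, uniformly in $s\geq s_0$ and in the admissible increment sequence. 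Write $Q$ for the left-hand side of the claim and $f_n(x)=\tfrac1{\sqrt{2\pi B_n}}e^{-x^2/2B_n}$, so the goal is $Q\geq(1-c/n^\delta)\ThetaJac(c's)f_n(x)$.

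\emph{Reduction to a skeleton event.} Put
\begin{gather*}
\mathcal A=\bigcap_{j=1}^l\{\rmd(\bar S_{L_j},[0,x])\leq\tfrac s2\sqrt n\},\qquad \mathcal C=\{\bar S\in\ControlGrad_n^{\alpha'}(L_0,\dots,L_l)\},\\
\mathcal B=\bigcap_{j=1}^l\Bigl\{\max_{L_{j-1}<i\leq L_j}|\bar S_i-\bar S_{L_{j-1}}|\leq\tfrac s2\sqrt n\Bigr\}.
\end{gather*}
On $\mathcal A\cap\mathcal B$ one has $\rmd(\bar S_i,[0,x])\leq s\sqrt n$ for every $i$ (using $\bar S_0=0$), so $Q\geq P_0(\mathcal A\cap\mathcal B\cap\mathcal C\cap\{\bar S_n=x\})$. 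Conditioning on the skeleton $(\bar S_{L_0},\dots,\bar S_{L_l})$ makes the within-block paths independent, whence $P_0(\mathcal B\mid\text{skeleton})=\prod_{j=1}^l p_j$, with $p_j$ the probability that a centred walk of length $\ell_j=L_j-L_{j-1}$, bridged to its terminal value $v_j:=\bar S_{L_j}-\bar S_{L_{j-1}}$, stays in $[-\tfrac s2\sqrt n,\tfrac s2\sqrt n]$. On $\mathcal C$ we have $|v_j|\leq\ell_j^{\alpha'}=o(\sqrt n)$; writing $1-p_j$ as a ratio, the numerator is bounded by splitting at the first exit time of $[-\tfrac s2\sqrt n,\tfrac s2\sqrt n]$ and using that from the exit point the walk must still travel a distance $\geq\tfrac s4\sqrt n\gg\sqrt{\ell_j}$ in at most $\ell_j$ steps, which costs $e^{-cs^2n/\ell_j}$ by Lemma~\ref{lem:very_large_tails_UB}, while the denominator is $\geq c\ell_j^{-1/2}e^{-c\ell_j^{2\alpha'-1}}$ by Theorem~\ref{thm:inhomo_LLT}. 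As $\gamma<\tfrac1{2\alpha'}$ makes $n/\ell_j\asymp n^{1-\gamma}$ dominate $\ell_j^{2\alpha'-1}\asymp n^{\gamma(2\alpha'-1)}$, this gives $1-p_j\leq Ce^{-cs^2n^{1-\gamma}}$, hence $\prod_j p_j\geq(1-Ce^{-cs^2n^{1-\gamma}})^l$, superpolynomially close to $1$. Thus $Q\geq(1-\eta_n)P_0(\mathcal A\cap\mathcal C\cap\{\bar S_n=x\})$ with $\eta_n$ superpolynomially small.

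\emph{Swap to the Gaussian and conclude.} The event $\mathcal A\cap\mathcal C\cap\{\bar S_n=x\}$ depends only on the skeleton and has exactly the form treated by Lemma~\ref{lem:Gaussian_approx_walk}; applying it with $\epsilon_i=1$ for $i<l$ and $\epsilon_l\downarrow0$ (in the limit $\tfrac1{\epsilon_l}$ times the Gaussian probability converges to $f_n(x)$ times the conditional probability for $Z$, as the remark after that lemma indicates), and using $e^{-c\sum_i(L_i-L_{i-1})^{-\beta}}\geq e^{-cn\ell_0^{-1-\beta}}\geq1-c/n^\delta$ (here $\gamma>\tfrac{1+\delta}{1+\beta}$ enters), one gets
\begin{multline*}
P_0(\mathcal A\cap\mathcal C\cap\{\bar S_n=x\})\\
\geq(1-c/n^\delta)\,f_n(x)\;P\Bigl(\{\rmd(Z_{L_j},[0,x])\leq\tfrac s4\sqrt n\text{ for all }j\}\cap\{Z\in\ControlGrad_n^{\alpha'}\}\ \Big|\ Z_n=x\Bigr),
\end{multline*}
the intervals $\bbI_j$ of the lemma being replaced, for $n$ large, by the slightly smaller $\tfrac s4\sqrt n$-neighbourhoods of $[0,x]$ that they contain despite the lattice rounding. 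Since the skeleton constraint is implied by the full one $\{\rmd(Z_i,[0,x])\leq\tfrac s4\sqrt n\ \forall i\}$, the last conditional probability is at least $P(\rmd(Z_i,[0,x])\leq\tfrac s4\sqrt n\ \forall i\mid Z_n=x)-P(Z\notin\ControlGrad_n^{\alpha'}\mid Z_n=x)$; the first term is $\geq\ThetaJac(s/(4\sigma_+))$ by Theorem~\ref{thm:Gaussian:small_ball_bridge} with $\sigma_+=\sigma_+(a,\delta_0,c_0)$, and the second is a union over $j$ of Gaussian tails: conditionally on $Z_n=x$, $Z_{L_j}-Z_{L_{j-1}}$ is Gaussian with $|\text{mean}|\lesssim n^{\alpha-1}\ell_0=o(\ell_0^{\alpha'})$ and variance $\lesssim\ell_0$, so each tail is $\leq e^{-c\ell_0^{2\alpha'-1}}$ and the sum is superpolynomially small. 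Collecting the three factors, and treating $x<0$ symmetrically, gives $Q\geq(1-c/n^\delta)\ThetaJac(c's)f_n(x)$ with $c'=1/(4\sigma_+)$.

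\emph{Main obstacle.} The crux is the within-block estimate: the crude Doob bound (Lemma~\ref{lem:macro_small_ball_walk}) only yields $1-p_j\lesssim\ell_0/n$, which is not summable over the $l\asymp n/\ell_0$ blocks, so one genuinely has to use that a short ($\ell_0\ll n$) bridge with $o(\sqrt n)$-separated endpoints reaches scale $\sqrt n$ only through an excursion much larger than its typical $\sqrt{\ell_0}$-size, which converts the defect into an exponentially small quantity. Keeping track of the factor $e^{-c\ell_j^{2\alpha'-1}}$ coming from the LLT lower bound on the denominator is what pins down the admissible window $\tfrac{1+\delta}{1+\beta}<\gamma<\tfrac1{2\alpha'}$ and hence $\delta(\alpha)$; the rest is elementary bookkeeping of the exponents $\alpha',\gamma,\delta$ (subject also to $n^{\alpha-1}\ell_0=o(\ell_0^{\alpha'})$), which can be carried out precisely because $\alpha<2/3$.
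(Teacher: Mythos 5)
Your argument is correct and follows essentially the same route as the paper's: coarse-grain into $l\asymp n^{1-\gamma}$ blocks of length $\asymp n^{\gamma}$, reduce to the skeleton event by bounding the within-block bridge deviation via a ratio (large-deviation numerator over an LLT-lower-bounded denominator), swap to the Gaussian walk via Lemma~\ref{lem:Gaussian_approx_walk}, invoke Theorem~\ref{thm:Gaussian:small_ball_bridge}, and absorb the $\ControlGrad$ defect by Gaussian tails. The only cosmetic differences are that you re-derive the within-block control by hand where the paper invokes Lemma~\ref{lem:coarse_graining_walk} block by block, you parametrize block sizes as $n^{\gamma}$ rather than $n^{1-\epsilon}$, and you take $\epsilon_l\downarrow 0$ in the swap where the paper simply keeps $\epsilon_l=1$ and then lower-bounds the Gaussian probability of $\{|Z_n-x|\le 1/2\}$.
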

\begin{proof}
	Use the shorthand \(P\equiv P_0\). Treat the case \(x\geq 0\). The case \(x\leq 0\) is handled in the same way. Suppose that we can fix \(\epsilon\in (0,\alpha), \alpha'\in (1/2,2/3)\) such that
    \begin{equation}
    \label{eq:prf:thm:Z_bridge:small_ball:exponents}
		(2\alpha'-1)(1-\epsilon) <\epsilon,
        \quad
        (1-\epsilon)\min(\tfrac{1}{3},2-3\alpha') >\epsilon,
        \quad
        \alpha - \epsilon < \alpha'(1-\epsilon).
	\end{equation}
	Let \(\calM \equiv \calM_{\delta_0,c_0}^a\). Let \(\ell = \lfloor n^{\epsilon}\rfloor\), and
	\begin{equation*}
		0=L_0<L_1<\dots <L_{\ell} = n,
		\quad
		\tfrac{1}{2}n^{1-\epsilon} \leq L_{i}-L_{i-1}\leq 2n^{1-\epsilon}.
	\end{equation*}
	Use the shorthand
	\begin{equation*}
		\ControlGrad \equiv \ControlGrad_{n}^{\alpha'}(L_1,\dots,L_{\ell}).
	\end{equation*}
	Then, by Lemma~\ref{lem:coarse_graining_walk}, for \(n\) large enough,
	\begin{multline*}
		P\bigl(\cap_{i=1}^{n} \bigl\{\rmd(\bar{S}_i,[0,x])\leq s\sqrt{n}\bigr\},\ \bar{S}_n = x\bigr)
		\geq
		(1-n^{\epsilon} e^{c'n^{(2\alpha'-1)(1-\epsilon)}-c s_0^2 n^{\epsilon}} )
		\\
		\cdot P\bigl(\bar{S}\in \ControlGrad,\ \cap_{j=1}^{\ell} \bigl\{\rmd(\bar{S}_{L_j},[0,x])\leq \tfrac{s}{2}\sqrt{n}\bigr\},\ \bar{S}_{L_{\ell}} = x\bigr)
		\\
		\geq
		(1-e^{-c n^{\epsilon}} )P\bigl(\bar{S}\in \ControlGrad,\ \cap_{j=1}^{\ell} \bigl\{\rmd(\bar{S}_{L_j},[0,x])\leq \tfrac{s}{2}\sqrt{n}\bigr\},\ \bar{S}_{L_{\ell}} = x\bigr)
        ,
	\end{multline*}
	for some \(c>0\) depending only on \(s_0,\delta_0,c_0,\alpha\), where we used~\eqref{eq:prf:thm:Z_bridge:small_ball:exponents}. Indeed, under \(\cap_{j=1}^{\ell} \bigl\{\rmd(\bar{S}_{L_j},[0,x])\leq \tfrac{s}{2}\sqrt{n}\bigr\}\), for \(\cap_{i=1}^{n} \bigl\{\rmd(\bar{S}_i,[0,x])\leq s\sqrt{n}\bigr\}\) not to be realized, there must be \(j\in \{1,\dots,\ell\}\) such that \(\rmd(\bar{S}_i,[\bar{S}_{L_{j-1}},\bar{S}_{L_{j}}])> \frac{s_0}{2}\sqrt{n}\). Using that under \(\ControlGrad_{n}^{\alpha'}(L_1,\dots,L_{\ell})\), \(|\bar{S}_{L_{j}}-\bar{S}_{L_{j-1}}|\leq 2n^{\alpha'(1-\epsilon)}\) gives the bound.
	Then, by Lemma~\ref{lem:Gaussian_approx_walk},
	\begin{multline*}
		P\bigl(\bar{S}\in\ControlGrad,\ \cap_{j=1}^{\ell} \bigl\{\rmd(\bar{S}_{L_j},[0,x])\leq \tfrac{s}{2}\sqrt{n}\bigr\},\ \bar{S}_{L_{\ell}} = x\bigr)
		\\
		\geq
		e^{-c n^{\epsilon-(1-\epsilon)\beta'}}
		P\bigl(Z\in \ControlGrad,\ \cap_{j=1}^{\ell} \bigl\{\rmd(Z_{L_j},[0,x])\leq \tfrac{s}{2}\sqrt{n}\bigr\},\ |Z_{L_{\ell}} -x|\leq \tfrac{1}{2}\bigr)
        ,
	\end{multline*}
	where \(\beta'=\min(1/3,2-3\alpha')\), and \(Z\) is a Gaussian walk with independent steps of law \(Z_{k}-Z_{k-1}\sim \calN(0,\Var(X_k))\), and \(c>0\) depends only on \(\delta_0,c_0,a\).
	
	We can now use Theorem~\ref{thm:Gaussian:small_ball_bridge} to obtain
	\begin{equation*}
		P\bigl(\cap_{j=1}^{\ell} \bigl\{\rmd(Z_{L_j},[0,x])\leq \tfrac{s}{2}\sqrt{n}\bigr\},\ |Z_{L_{\ell}}-x|\leq \tfrac{1}{2}\bigr)
		\geq
		\frac{e^{-x^2/2B_n}}{\sqrt{2\pi B_n}}\ThetaJac\bigl(\tfrac{s}{2\sigma_+}\bigr)
	\end{equation*}
	where \(\sigma_+ = \sup_{p\in \calM}\Var_p(X)\).
	Finally, by a union bound and large deviation estimates for Gaussians, for \(n\) large enough,
	\begin{align*}
		&P\bigl( \{Z\in \ControlGrad\}^c \bgiven Z_n = x\bigr)
		\leq
		\sum_{j=1}^{\ell} P\bigl( |Z_{L_j}-Z_{L_{j-1}}|\geq \tfrac{1}{2}n^{\alpha'(1-\epsilon)} \bgiven Z_n = x\bigr)
		\\
		&\qquad\leq
		\sum_{j=1}^{\ell} P\bigl( |Z_{L_j}-Z_{L_{j-1}}|\geq \tfrac{1}{2}n^{\alpha'(1-\epsilon)}-\tfrac{\sigma_+^2|x|(L_j-L_{j-1})}{\sigma_-^2 n} \bgiven Z_n = 0\bigr)
		\\
		&\qquad\leq
		\sum_{j=1}^{\ell}P\bigl( |Z_{L_j}-Z_{L_{j-1}}|\geq \tfrac{1}{3}n^{\alpha'(1-\epsilon)} \bgiven Z_n = 0\bigr)
		\\
		&\qquad\leq
		n^{\epsilon} \frac{6\sigma_+ \sqrt{2n^{1-\epsilon}}}{\sqrt{2\pi}n^{\alpha'(1-\epsilon)}} \exp(-\frac{n^{2\alpha'(1-\epsilon)}}{36\sigma_+^2 n^{1-\epsilon}})
		\leq
		e^{-c n^{(2\alpha'-1)(1-\epsilon)}},
	\end{align*}
	where \(c>0\), and we used that for any \(j=1,\dots,\ell_n\),
	\begin{equation*}
		\tfrac{1}{2}n^{\alpha'(1-\epsilon)}-\tfrac{\sigma_+^2|x|(L_j-L_{j-1})}{\sigma_-^2 n}
		\geq
		\tfrac{1}{2}n^{\alpha'(1-\epsilon)}-\tfrac{2\sigma_+^2}{\sigma_-^2}n^{\alpha-\epsilon}
        \geq
		\tfrac{1}{3}n^{\alpha'(1-\epsilon)},
	\end{equation*}
	which relies on~\eqref{eq:prf:thm:Z_bridge:small_ball:exponents}, and which holds for \(n\) larger than some \(\sigma_+/\sigma_-\)-dependent constant. One thus gets
	\begin{multline*}
		P\bigl( Z\in \ControlGrad,\ \cap_{j=1}^{\ell} \bigl\{\rmd(Z_{L_j},[0,x])\leq \tfrac{s}{2}\sqrt{n}\bigr\},\ |Z_{L_{\ell}}-x|\leq \tfrac{1}{2}\bigr)
		\\\geq
		\frac{e^{-x^2/2B_n}}{\sqrt{2\pi B_n}}\Bigl(\ThetaJac\bigl(\tfrac{s}{2\sigma_+}\bigr) - e^{-cn^{(2\alpha'-1)(1-\epsilon)}}\Bigr).
	\end{multline*}
	Gathering everything, and using that \(\ThetaJac\) is increasing, we get that for \(n\) large enough,
    \begin{multline*}
        P\bigl(\cap_{i=1}^{n} \bigl\{\rmd(\bar{S}_i,[0,x])\leq s\sqrt{n}\bigr\},\ \bar{S}_n = x\bigr)
		\\
        \geq
		(1-e^{-c n^{\epsilon}} )e^{-c n^{\epsilon-(1-\epsilon)\beta'}}
		\frac{e^{-x^2/2B_n}}{\sqrt{2\pi B_n}}\ThetaJac\bigl(\tfrac{s}{2\sigma_+}\bigr)\Bigl(1 - e^{-cn^{(2\alpha'-1)(1-\epsilon)}}\Bigr)
        \\
        \geq
		e^{-c n^{\epsilon-(1-\epsilon)\beta'}}
		\frac{e^{-x^2/2B_n}}{\sqrt{2\pi B_n}}\ThetaJac\bigl(\tfrac{s}{2\sigma_+}\bigr)
    \end{multline*}
    for some \(c\) depending only on \(\delta_0,c_0,a, s_0,\alpha\).
    We now find values of \(\epsilon,\alpha'\) satisfying~\eqref{eq:prf:thm:Z_bridge:small_ball:exponents}. Take
    \begin{equation*}
        (\alpha',\epsilon, \beta') = \begin{cases}
            (\tfrac{19}{36},\tfrac{1}{10},\tfrac{1}{3}) & \text{ if } \alpha \leq \tfrac{1}{2},
            \\
            (\tfrac{26}{45},\tfrac{19}{109},\tfrac{4}{15}) & \text{ if } \tfrac{1}{2} < \alpha \leq \tfrac{3}{5},
            \\
            (\tfrac{19}{36},\tfrac{55}{259},\tfrac{1}{3}) & \text{ if } \tfrac{3}{5} < \alpha \leq \tfrac{5}{8},
            \\
            (\tfrac{7-3\alpha}{15-10\alpha},\tfrac{8-7\alpha}{38-27\alpha},\tfrac{9-11\alpha}{15-10\alpha}) & \text{ if } \tfrac{5}{8} < \alpha < \tfrac{2}{3}.
        \end{cases}
    \end{equation*}These respectively give the values
    \begin{equation*}
        \delta(\alpha) = (1-\epsilon)\beta'-\epsilon =
        \begin{cases}
            \tfrac{1}{5} & \text{ if } \alpha \leq \tfrac{1}{2},
            \\
            \tfrac{5}{109} & \text{ if } \tfrac{1}{2} < \alpha \leq \tfrac{3}{5},
            \\
            \tfrac{13}{259} & \text{ if } \tfrac{3}{5} < \alpha \leq \tfrac{5}{8},
            \\
            \tfrac{10-15\alpha}{38-27\alpha} & \text{ if } \tfrac{5}{8} < \alpha < \tfrac{2}{3}.
        \end{cases}
    \end{equation*}
    and thus the claim.
\end{proof}

We then prove a ``small-ball with constrained endpoint'' result for ball size that can be mesoscopic or microscopic. The limitation will be that the excursion endpoint must live at the same scale as the smallness of the ball.
\begin{theorem}
	\label{thm:small_ball_fixed_endpoint}
	Let \(c_0,\delta_0>0\), \(a\in [0,1]^{\Z}\) be an irreducible, aperiodic sequence. Let \(\epsilon>0\). There are \(c_-,c_+,C\in (0,+\infty)\), \(\lambda_0\geq 0\), \(n_0\geq 1\), such that for any \(n\geq n_0\) and \(X_1,\dots, X_n\) independent sequence of random variables with laws in \(\calM_{\delta_0,c_0}^a\) one has the following.
	\begin{itemize}
		\item For any \(\sqrt{n}\geq \lambda\geq \lambda_0\), and any \(x\in \Z-m_n\) with \(|x|\leq (1-\epsilon)\lambda\),
		\begin{equation*}
			P_0\bigl(\max_{i=1,\dots,n} |\bar{S}_i| \leq \lambda,\ \bar{S}_n = x \bigr)
			\geq 
			\tfrac{1}{C\lambda}\exp(-\tfrac{c_- n}{\lambda^2}).
		\end{equation*}
		\item For any \(\sqrt{n}\geq \lambda\geq \lambda_0\), and any \(x\in \Z-m_n\) with \(|x|\leq \lambda\),
		\begin{equation*}
			P_0\bigl(\max_{i=1,\dots,n} |\bar{S}_i| \leq \lambda,\ \bar{S}_n = x \bigr)
			\leq 
			\tfrac{C}{\lambda}\exp(-\tfrac{c_+ n}{\lambda^2}).
		\end{equation*}
	\end{itemize}
\end{theorem}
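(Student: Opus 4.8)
The plan is to prove the two bounds separately, each by reduction to results already available: the lower bound to the bridge small-ball estimate at scale $\sqrt n$ (Theorem~\ref{thm:Z_bridge:small_ball}) applied on mesoscopic blocks, and the upper bound to the free-endpoint estimate (Lemma~\ref{lem:micro_small_ball_walk_UB}) together with the local limit theorem (Theorem~\ref{thm:inhomo_LLT}). Throughout one uses that irreducibility/aperiodicity of $a$ and laws in $\calM_{\delta_0,c_0}^a$ give uniform exponential (hence all) moments and $0<\sigma_-^2\le\Var(X_i)\le\sigma_+^2<\infty$.

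\textbf{Lower bound.} Fix $\epsilon>0$, take $\alpha=\tfrac35\in(\tfrac12,\tfrac23)$, and choose once and for all a small $s=s(\epsilon)>0$ (used as $s_0$ in Theorem~\ref{thm:Z_bridge:small_ball}) and a window half-width parameter $\eta=\eta(\epsilon)\in(0,\epsilon/2)$. As in Lemma~\ref{lem:micro_small_ball_walk_LB}, partition $\{0,\dots,n\}$ into $\ell=\max(1,\lfloor n/\lambda^2\rfloor)$ consecutive blocks $[L_{j-1},L_j]$ of comparable lengths $T_j=L_j-L_{j-1}\asymp\lambda^2$ (when $\ell=1$, i.e.\ $\lambda\asymp\sqrt n$, one applies Theorem~\ref{thm:Z_bridge:small_ball} directly to the whole walk with ball radius $s\sqrt n\le\epsilon\lambda/2$, absorbing the bounded factor $e^{-c_-n/\lambda^2}\asymp 1$ into the constant). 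Put checkpoints $v_j$ at the point of $\Z-m_{L_j}$ nearest to $(L_j/n)x$ and windows $W_j=\{w\in\Z-m_{L_j}:|w-v_j|\le\eta\lambda\}$, with $v_\ell=x$ and $W_\ell=\{x\}$. Since $|x|\le(1-\epsilon)\lambda$ and $\eta<\epsilon/2$, each $W_j\subseteq[-(1-\epsilon/2)\lambda,(1-\epsilon/2)\lambda]$, which supplies the needed margin. I would then bound
\[
P_0\bigl(\max_i|\bar S_i|\le\lambda,\ \bar S_n=x\bigr)\ \ge\ P_0\Bigl(\textstyle\bigcap_{j=1}^\ell\{\bar S_{L_j}\in W_j\}\cap\{\max_i|\bar S_i|\le\lambda\}\Bigr)
\]
and, conditioning block by block via the Markov property, reduce to a single-block estimate: for $y'\in W_{j-1}$, apply Theorem~\ref{thm:Z_bridge:small_ball} to the sub-walk $X_{L_{j-1}+1},\dots,X_{L_j}$ with endpoint $w-y'$ (satisfying $|w-y'|\le 2\lambda\le T_j^{\alpha}$ for $\lambda\ge\lambda_0$) and ball radius $s\sqrt{T_j}$, to get $P(\bar S_{L_j}=w,\ |\bar S_i|\le\lambda\ \forall i\in(L_{j-1},L_j]\mid \bar S_{L_{j-1}}=y')\ge c_1/\lambda$ for each $w\in W_j$; the key point is that the segment joining $y'$ to $w$ lies in $[-(1-\epsilon/2)\lambda,(1-\epsilon/2)\lambda]$, so its $s\sqrt{T_j}$-neighbourhood (with $\sqrt{T_j}\le\sqrt2\,\lambda$, whence $s\le\epsilon/(2\sqrt2)$ suffices) sits inside $[-\lambda,\lambda]$. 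Summing over the $\asymp\eta\lambda$ points of $W_j$ gives a uniform per-block bound $c_2=\eta c_1>0$ for $j<\ell$, while the last block (single target $x$) contributes $c_1/\lambda$; an induction on $j$ yields $P_0(\cdots)\ge(c_1/\lambda)c_2^{\ell-1}\ge\frac{1}{C\lambda}e^{-c_-n/\lambda^2}$ with $c_-=|\ln c_2|$, since $\ell-1\le n/\lambda^2$.

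\textbf{Upper bound.} For $\sqrt n/\sqrt2<\lambda\le\sqrt n$ bound $P_0(\cdots)\le P_0(\bar S_n=x)\le C/\sqrt n\le C'/\lambda$ by the upper half of Theorem~\ref{thm:inhomo_LLT} (valid since $|x|\le\sqrt n<n^{3/5}$) and absorb the bounded factor $e^{-c_+n/\lambda^2}\asymp1$. For $\lambda_0\le\lambda\le\sqrt n/\sqrt2$, set $m=\lceil\lambda^2\rceil$, so $n-m\ge n/2\ge\lambda^2$; by the Markov property at time $n-m$,
\[
P_0\bigl(\max_i|\bar S_i|\le\lambda,\ \bar S_n=x\bigr)\ \le\ E_0\Bigl[\mathds{1}\{\textstyle\max_{i\le n-m}|\bar S_i|\le\lambda\}\,g(x-\bar S_{n-m})\Bigr],
\]
where $g$ is the mass function of $\bar X_{n-m+1}+\cdots+\bar X_n$; on this event $|x-\bar S_{n-m}|\le 2\lambda\le m^{3/5}$, so the upper half of Theorem~\ref{thm:inhomo_LLT} applied to the sub-walk gives $g(x-\bar S_{n-m})\le C/\sqrt m\le C/\lambda$. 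Hence $P_0(\cdots)\le\frac{C}{\lambda}P_0(\max_{i\le n-m}|\bar S_i|\le\lambda)\le\frac{C}{\lambda}e^{-c(n-m)/\lambda^2}\le\frac{C}{\lambda}e^{-c_+n/\lambda^2}$ by Lemma~\ref{lem:micro_small_ball_walk_UB} (applicable since $\bar X_i$ is centred with uniform exponential moments and $\Var(\bar X_i)\ge\sigma_-^2$, and $\lambda\le\sqrt{n-m}$), using $n-m\ge n/2$ and $c_+=c/2$.

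\textbf{Main obstacle.} The upper bound is essentially routine once decomposed this way. The crux is the lower bound: making the ``segment-neighbourhood'' tube produced by Theorem~\ref{thm:Z_bridge:small_ball} fit inside the prescribed tube $[-\lambda,\lambda]$ on each block. This is precisely why the lower bound needs $|x|\le(1-\epsilon)\lambda$ with $\epsilon>0$ strictly, and it forces the small, $\epsilon$-dependent choices of the block ball-radius parameter $s$ and of the checkpoint-window width $\eta$. A secondary nuisance is the bookkeeping in the block-by-block induction and the degenerate regime $\lambda\asymp\sqrt n$, where one must fall back on applying Theorem~\ref{thm:Z_bridge:small_ball} (lower) or Theorem~\ref{thm:inhomo_LLT} (upper) to the whole walk directly.
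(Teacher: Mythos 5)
Your upper bound is essentially identical to the paper's: Markov at time $n-\lceil\lambda^2\rceil$, the Local Limit Theorem (Theorem~\ref{thm:inhomo_LLT}) for the last block's transition mass, and Lemma~\ref{lem:micro_small_ball_walk_UB} for the remaining steps.

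Your lower bound is correct but takes a different route. You run a full block decomposition into $\asymp n/\lambda^2$ pieces of length $\asymp\lambda^2$, with interpolating checkpoints tracking the segment $[0,x]$ and windows of width $\eta\lambda$, and you apply the bridge small-ball lower bound (Theorem~\ref{thm:Z_bridge:small_ball}) to \emph{every} block. The paper instead factors the argument: it first applies Lemma~\ref{lem:micro_small_ball_walk_LB} to the first $n-\lceil\lambda^2\rceil$ steps, obtaining the entire $e^{-cn/\lambda^2}$ decay together with the event $|\bar S_{n-L}|\le\lambda/2$ in one stroke, and then pins the endpoint with a single application of Theorem~\ref{thm:Z_bridge:small_ball} on the final block (starting in $[-\lambda/2,\lambda/2]$, ending at $x$ with $|x|\le(1-\epsilon)\lambda$, which fits inside $[-\lambda,\lambda]$ with margin $\ge\epsilon\lambda/2$). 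The interpolating-checkpoint machinery you build is precisely what Lemma~\ref{lem:micro_small_ball_walk_LB} already packages up in the free-endpoint setting, so the paper's version is shorter; yours is more self-contained and makes the geometric role of the $\epsilon$-margin explicit on every block. The bookkeeping you flag (degenerate regime $\ell=1$ at $\lambda\asymp\sqrt n$, ensuring $c_2<1$, the constraint $|w-y'|\le T_j^\alpha$ for $\lambda\ge\lambda_0$) is handled correctly.
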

\begin{proof}
	Use the shorthand \(P_0\equiv P\). Let \(L= \lceil \lambda^2 \rceil\). By Theorem~\ref{thm:Z_bridge:small_ball}, one has that for \(L\) large enough, \(|y|\leq \lambda/2\) and \(|x|\leq (1-\epsilon)\lambda\) in the support of \(\bar{S}_{n-L}\) and \(\bar{S}_n\) respectively,
	\begin{align*}
		P\bigl(\max_{i=n-L,\dots,n} |\bar{S}_i| \leq \lambda,\ \bar{S}_{n} = x \bgiven \bar{S}_{n-L} = y \bigr)
		&\geq
		\tfrac{1}{2}\ThetaJac(c'\epsilon) \frac{1}{\sqrt{2\pi B_{n-L+1,n}}} e^{-(x-y)^2/2B_{n-L+1,n}}
		\\
		&\geq
		c_{\epsilon} \tfrac{1}{\lambda},
	\end{align*}
	where \(B_{l,k} = \sum_{i= l}^k \Var(X_i)\). Thus, by Lemma~\ref{lem:micro_small_ball_walk_LB},
	\begin{align*}
		P\bigl(\max_{i=1,\dots,n} |\bar{S}_i| \leq \lambda,\ \bar{S}_n = x \bigr)
		&\geq
		c_{\epsilon} \tfrac{1}{\lambda} \sum_{|y|\leq \lambda/2} P\bigl(\max_{i=1,\dots,n-L} |\bar{S}_i| \leq \lambda,\ \bar{S}_{n-L} = y \bigr)
		\\
		&\geq
		c_{\epsilon} \tfrac{1}{\lambda} P\bigl(\max_{i=1,\dots,n-L} |\bar{S}_i| \leq \lambda,\ |\bar{S}_{n-L}| \leq \lambda/2 \bigr)
		\\
		&\geq
		C \tfrac{1}{\lambda} \exp(-c\tfrac{n}{\lambda^2})
	\end{align*}
	for some \(c,C>0\). This is the wanted lower bound. For the upper bound, with \(L\) as before and \(|x|\leq\lambda\), we have
	\begin{align*}
		P\bigl(\max_{i=1,\dots,n} |\bar{S}_i| \leq \lambda,\ \bar{S}_n = x \bigr)
		&\leq
		\sum_{|y|\leq \lambda} P\bigl(\max_{i=1,\dots,n-L} |\bar{S}_i| \leq \lambda,\ \bar{S}_{n-L} = y \bigr)P(\bar{S}_n = x \given \bar{S}_{n-L} = y)
		\\
		& \leq
		\tfrac{C}{\lambda} P\bigl(\max_{i=1,\dots,n-L} |\bar{S}_i| \leq \lambda\bigr)
		\leq
		\tfrac{C}{\lambda} \exp(-c\tfrac{n}{\lambda^2})
	\end{align*}
	where we used the Local Limit Theorem (Theorem~\ref{thm:inhomo_LLT}) in the second inequality, and Lemma~\ref{lem:micro_small_ball_walk_UB} in the third.
\end{proof}

\section{Positivity estimates: fixed endpoint}
\label{sec:fixed_endpoint}

Our final goal is to study bridges/excursions. We therefore need to prove versions of the results in Section~\ref{sec:traj_est_walks} for fixed endpoints. To this end, we need to restrict the class of steps considered (even to have a well defined and non-trivial measure under the endpoint constraint!). Our restriction on the steps are the same as the ones in Theorem~\ref{thm:inhomo_LLT}.

\subsection{Lower bound}
\label{subsec:positivity_fixed_endpoint:LB}

\begin{lemma}
\label{lem:positivity_fixed_endpoint:LB}
	Let \(c_0,\delta_0>0\), \(a\in [0,1]^{\Z}\) be an irreducible, aperiodic sequence. Let \(\alpha\in (1/2,2/3)\). There are \(C,c\in (0,+\infty)\), \(n_0\geq 1\), such that for any \(n\geq n_0\) and \(X_1,\dots, X_n\) independent sequence of random variables with laws in \(\calM_{\delta_0,c_0}^a\), one has the following. For any \(0\leq u,v\leq n^{\alpha}\) with \(P_u(\bar{S}_n = v)>0\),
	\begin{equation*}
		P_u\bigl(\bar{S}_n = v,\ \min_{i=1,\dots,n} \bar{S}_i \geq 0 \bigr)
		\geq
		\frac{C \min(u+1,\sqrt{n})\min(v+1,\sqrt{n})}{n^{3/2}}e^{-c(u-v)^2/n}.
	\end{equation*}
\end{lemma}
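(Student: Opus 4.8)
The plan is to reduce the bound, using time-reversal symmetry and a decomposition of $[0,n]$ into three blocks of length $\asymp n/3$, to a single free-endpoint estimate which I will call $(\star)$: there are a fixed window $I=[\beta_-\sqrt N,\beta_+\sqrt N]$ and constants $c_1,c_2>0$ with
\[
 P_u\bigl(\min_{1\le i\le N}\bar S_i\ge 0,\ \bar S_N\in I\bigr)\ \ge\ c_1\,\frac{\min(u+1,\sqrt N)}{\sqrt N}\,e^{-c_2u^2/N},\qquad 0\le u\le N^{\alpha''},
\]
for a fixed $\alpha''\in(\alpha,2/3)$ (a little room is needed since the blocks have length $\asymp n/3$; one also checks that the reversed increments $-\bar X_i$ have laws in $\calM^{\tilde a}_{\delta_0,c_0}$ for the reflected, still irreducible and aperiodic, $\tilde a$, and that the centred increments $\bar X_i$ meet the $4$th-moment and $E(\bar X_i\mathds{1}_{\bar X_i>0})\ge s$ hypotheses of Lemmas~\ref{lem:positivity_general_walks:LB}, \ref{lem:positivity_general_walks:moments} via Lemma~\ref{lem:equivalent_second_moment_cond} and the uniform variance/exponential-moment bounds built into $\calM^a_{\delta_0,c_0}$). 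Granting $(\star)$, with $0=n_0<n_1<n_2<n$, $n_j-n_{j-1}\asymp n/3$, the Markov property gives
\[
 P_u\bigl(\bar S_n=v,\ \min_i\bar S_i\ge 0\bigr)\ \ge\ \sum_{y\in I}\sum_{z\in I}P_u\bigl(\min_{i\le n_1}\bar S_i\ge 0,\ \bar S_{n_1}=y\bigr)\,p_2(y,z)\,p_3(z,v),
\]
with $p_2(y,z)$, $p_3(z,v)$ the probabilities that the sub-walk on $[n_1,n_2]$, resp.\ $[n_2,n]$, stays $\ge0$ and hits $z$, resp.\ $v$, at its right endpoint given its value at the left one. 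Since $[y\wedge z,y\vee z]\subseteq I$ lies at height $\ge\beta_-\sqrt n$, Theorem~\ref{thm:small_ball_fixed_endpoint} applied to the shifted sub-walk with $s=\beta_-$ gives $p_2(y,z)\ge c_I/\sqrt n$ uniformly in $y,z\in I$; summing over $z$ then $y$ and using $(\star)$ for the first block and $(\star)$ for the reversed walk started at $v$ for $\sum_{z\in I}p_3(z,v)$ yields
\[
 P_u\bigl(\bar S_n=v,\ \min_i\bar S_i\ge 0\bigr)\ \ge\ \frac{C\,\min(u+1,\sqrt n)\,\min(v+1,\sqrt n)}{n^{3/2}}\,e^{-c(u^2+v^2)/n}.
\]

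Writing $u^2+v^2=(u-v)^2+2uv$, this is already the claimed bound when $uv\le C_1n$. When $uv>C_1n$ and $\min(u,v)\ge K_0\sqrt n$ (both endpoints large), Theorem~\ref{thm:small_ball_fixed_endpoint} applies directly: staying within $\tfrac12\min(u,v)$ of the segment $[u\wedge v,u\vee v]$ forces positivity, $|u-v|\le n^\alpha$, and $\ThetaJac$ is bounded below — giving $\ge\tfrac{c}{\sqrt n}e^{-c(u-v)^2/n}$, which matches the target since $\min(u+1,\sqrt n)=\min(v+1,\sqrt n)=\sqrt n$ here. When $uv>C_1n$ but $\min(u,v)<K_0\sqrt n$ (say $v=\min$, possibly after a time-reversal), use a two-block split at $n_1$: the first block is $(\star)$ from $u$ to $I$, and the second block is an instance of the lemma for the pair $(y,v)$ with $y\in I$, which falls in the already-treated regime $yv\le C_1n$ (choose $C_1$ large relative to $K_0,\beta_+$); uniformity in $y\in I$ together with $(u-v)^2\ge u^2/4$ closes the estimate. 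Finally, $u\in[0,1)$ and small $n$ are handled by a one-step/irreducibility argument as in Lemma~\ref{lem:positivity_general_walks:LB}.

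It remains to prove $(\star)$. For moderate $u\le K\sqrt N$ this is short: Lemma~\ref{lem:positivity_general_walks:LB} gives $P_u(\min_i\bar S_i\ge 0)\ge c(u+1)/\sqrt N$, and Lemma~\ref{lem:positivity_general_walks:moments} gives $E_u(\bar S_N\mid\min_i\bar S_i\ge0)\ge c'\sqrt N$ and $E_u(\bar S_N^2\mid\min_i\bar S_i\ge0)\le CN$, so Paley--Zygmund (lower bound) and Chebyshev (upper bound) show $\bar S_N$ lands in $I=[\beta_-\sqrt N,\beta_+\sqrt N]$ with probability $\ge$ const once $\beta_-$ is small and $\beta_+$ large, while $e^{-c_2u^2/N}$ costs nothing. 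The substantial case is $K\sqrt N<u\le N^{\alpha''}$, where I would use an exponential change of measure: tilt each centred increment $\bar X_i$ by $\lambda<0$ so that $E^{\lambda}_u(\bar S^{\lambda}_N)$ is the midpoint of $I$; since $u\le N^{\alpha''}$ one has $|\lambda|\asymp u/B_N\le N^{\alpha''-1}/\gamma\to0$, so $\lambda$ stays within the analyticity radius of Theorem~\ref{thm:inhomo_LLT} and the tilted laws keep uniform exponential moments and variances bounded away from $0,\infty$ (the linear shift relating $\bbH_N$ to its centred version leaves second and higher derivatives untouched). The Radon--Nikodym identity reads
\[
 P_u\bigl(\min_i\bar S_i\ge0,\ \bar S_N\in I\bigr)\ =\ \sum_{z\in I}e^{\bbH_N(\lambda)-\lambda(z-u)}\,P^{\lambda}_u\bigl(\min_i\bar S^{\lambda}_i\ge0,\ \bar S^{\lambda}_N=z\bigr),
\]
and the second-order expansions of $\bbH_N$ from the proof of Theorem~\ref{thm:inhomo_LLT} give $e^{\bbH_N(\lambda)-\lambda(z-u)}=e^{-(1+o(1))u^2/2B_N}\ge e^{-c_2u^2/N}$ uniformly over $z$ in the central sub-window $I'$ of $I$. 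Under $P^{\lambda}_u$ the walk has a small downward drift, its mean trajectory decreasing roughly linearly from $u$ down to the midpoint of $I$, hence staying above $\tfrac12\beta_+\sqrt N>0$; writing $\bar S^{\lambda}_i$ as this mean plus a centred walk $W_i$ of variance $\asymp i$,
\[
 \bigl\{\min_i\bar S^{\lambda}_i\ge0,\ \bar S^{\lambda}_N\in I'\bigr\}\ \supseteq\ \bigl\{\min_i W_i\ge-\tfrac12\beta_+\sqrt N\bigr\}\cap\bigl\{W_N\in[-\eta\sqrt N,\eta\sqrt N]\bigr\},
\]
whose probability is $\ge$ const — the first event by Doob's maximal inequality (small chance of a deviation $\ge\tfrac12\beta_+\sqrt N$ when $\beta_+$ is large), the second by the Berry--Esseen bound of Theorem~\ref{thm:inhomog_CLT}. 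This gives $P_u(\min_i\bar S_i\ge0,\ \bar S_N\in I)\ge\text{const}\cdot e^{-c_2u^2/N}$, which is $(\star)$ since $\min(u+1,\sqrt N)=\sqrt N$ in this range.

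The main obstacle is precisely this last step. The decisive simplification is that, after the tilt, positivity ceases to be binding — the tilted walk lives high above $0$ — so the whole difficulty is repackaged into (i) the deterministic exponential factor, controlled by reusing the $\bbH_N$-estimates already established for Theorem~\ref{thm:inhomo_LLT}, and (ii) the uniformity of the tilted increments, immediate from the definition of $\calM^a_{\delta_0,c_0}$ since $|\lambda|\to0$. A secondary but genuine subtlety is the bookkeeping needed to get the sharp exponent $e^{-c(u-v)^2/n}$ rather than the weaker $e^{-c(u^2+v^2)/n}$ produced by the naive three-block gluing; this is what forces the case split on $uv$ versus $n$ and the separate, direct treatment via Theorem~\ref{thm:small_ball_fixed_endpoint} of endpoints that are both large and comparable.
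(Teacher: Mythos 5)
Your proposal reaches the right conclusion and, modulo a citation slip, the argument goes through, but it takes a genuinely different and heavier route than the paper's. You reduce the whole lemma to a free-endpoint estimate $(\star)$ that carries the $e^{-cu^2/N}$ factor and is meant to hold all the way up to $u\leq N^{\alpha''}$, and for $u$ in the range $K\sqrt N<u\leq N^{\alpha''}$ you prove $(\star)$ by an exponential change of measure. The paper never needs such a tilt: it splits into the three regimes ($u,v\geq\sqrt n$; $u,v\leq\sqrt n$; one of each) and, whenever an endpoint is above $\sqrt n$, it invokes the bridge small-ball bound of Theorem~\ref{thm:Z_bridge:small_ball} directly for the segment containing that endpoint — so the free-endpoint bound from Lemmas~\ref{lem:positivity_general_walks:LB}--\ref{lem:positivity_general_walks:moments} is only ever used with starting point below $\sqrt n$, where there is no exponential factor to produce. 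Your version buys a cleaner, more uniform auxiliary statement (a single $(\star)$ covering the whole $[0,n^{\alpha}]$ range) at the cost of re-deriving, via the tilt, a piece of information that Theorem~\ref{thm:Z_bridge:small_ball} already packages; the paper's case-split is shorter precisely because it routes around that.

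Two small points worth flagging. First, the theorem you want for the ``both endpoints large'' case and for the middle block is Theorem~\ref{thm:Z_bridge:small_ball}, not Theorem~\ref{thm:small_ball_fixed_endpoint}: the latter requires $\lambda\leq\sqrt n$ and a ball centred at the origin, whereas you want a corridor of half-width $\frac12\min(u,v)$ (possibly $\gg\sqrt n$) around the segment $[u\wedge v,u\vee v]$ together with the constraint $|u-v|\leq n^{\alpha}$ — exactly the setup of Theorem~\ref{thm:Z_bridge:small_ball}, whose $\ThetaJac$ factor you already refer to. Second, the intermediate claim $e^{\bbH_N(\lambda)-\lambda(z-u)}=e^{-(1+o(1))u^2/2B_N}$ uniformly over $z\in I'$ is not quite an equality: the $z$-dependent piece $-\lambda(z-\mathrm{mid})$ has size of order $|\lambda|\sqrt N\asymp u/\sqrt N$, which is not $o(u^2/N)$ in general. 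It \emph{is} $O(u^2/(KN))$ once $u>K\sqrt N$, so the lower bound $\geq e^{-c_2u^2/N}$ you actually need survives after enlarging $c_2$ slightly and taking $K$ large, but the displayed ``equality'' should be replaced by that one-sided estimate.
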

\begin{figure}
	\centering
	\includegraphics{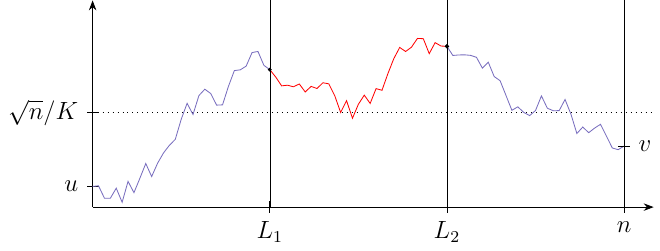}
	\caption{Lemma~6.1: Theorem~\ref{thm:Z_bridge:small_ball} can be used to estimate the contribution of the middle piece.}
\end{figure}
\begin{proof}
    Always assume \(n\) large enough so that everything works out.

	\medskip
    \noindent\textbf{Suppose first that \(\boldsymbol{ n^{\alpha} \geq u,v\geq \sqrt{n}}\).}
    Then, Theorem~\ref{thm:Z_bridge:small_ball} implies that
    \begin{equation*}
        P_u\bigl(\bar{S}_n = v,\ \min_{i=1,\dots,n} \bar{S}_i \geq 0 \bigr)
		\geq
        \frac{c}{\sqrt{n}}e^{-(u-v)^2/2B_n},
    \end{equation*}
    for some \(c>0\). This is the wanted claim in this case.
    
	\medskip
    \noindent\textbf{Suppose then that \(\boldsymbol{ 0\leq u,v\leq \sqrt{n}}\).}
    Let \(L_1 = \lceil \frac{n}{3} \rceil\), \(L_2 = \lfloor \frac{2n}{3}\rfloor\). Let \(K>0\) be large to be fixed later. By Theorem~\ref{thm:Z_bridge:small_ball}, for any \(x,y\in [\sqrt{n}/K, n^{\alpha}]\),
	\begin{equation*}
		P\bigl(S_{L_2} = y,\ \min_{i=L_1+1,\dots, L_2} S_i \geq 0 \given S_{L_1} = x\bigr)
		\geq
		\frac{c}{\sqrt{n}}
	\end{equation*}
	for some \(c>0\) depending only on \(K,c_0,\delta_0,a\). Thus, using Markov property,
	\begin{align*}
		&P_u\bigl(\bar{S}_n = v,\ \min_{i=1,\dots,n} \bar{S}_i \geq 0 \bigr)
		\\
        &\quad \geq
		\tfrac{c}{\sqrt{n}}\sum_{\sqrt{n}/K \leq x,y\leq K\sqrt{n}} P_u\bigl(\bar{S}_{L_1} = x,\ \min_{1,\dots,L_1} \bar{S}_i \geq 0 \bigr)
        P\bigl(\bar{S}_{n} = v,\ \min_{L_2,\dots,n} \bar{S}_i \geq 0 \bgiven S_{L_2} = y \bigr)
		\\
		&\quad=
		\tfrac{c}{\sqrt{n}} P_u\bigl(\bar{S}_{L_1} \in [\sqrt{n}/K, K\sqrt{n}],\ \min_{i=1,\dots,L_1} \bar{S}_i \geq 0 \bigr)
		\\
		&\quad\qquad \cdot P\bigl(\backvec{S}_{n-L_2} \in [\sqrt{n}/K, K\sqrt{n}],\ \min_{i=1,\dots,n-L_2} \backvec{S}_{i} \geq 0 \bgiven \backvec{S}_{0} = v \bigr),
	\end{align*}
	where \(\backvec{S}_{i} = \bar{S}_{n-i}\) is the time reversal of \(\bar{S}\). Now, on the one hand, by Lemma~\ref{lem:positivity_general_walks:moments}, and Chebychev's inequality,
	\begin{equation*}
		P_u\bigl(\bar{S}_{L_1} > K\sqrt{n} \bgiven \min_{i=1,\dots,L_1} \bar{S}_i \geq 0 \bigr)
		\leq
		\frac{c}{K^2}
	\end{equation*}
	for some \(c>0\) depending only on \(h,c_0,\delta_0,a\). On the other hand, by the CLT (Theorem~\ref{thm:inhomog_CLT}), for any \(n\geq n_0\) (with \(n_0\) depending only on \(c_0,\delta_0,a\))
	\begin{equation*}
		P_u\bigl(\bar{S}_{L_1} \geq \sqrt{n}/K \bgiven \min_{i=1,\dots,L_1} \bar{S}_i \geq 0 \bigr)
		\geq
		P_u\bigl(\bar{S}_{L_1} \geq \sqrt{n}/K\bigr)
		\geq
		\frac{1}{3}
	\end{equation*}
	where we used the FKG inequality in the first inequality, and \(K\) large and the CLT in the second. Taking \(K\) large enough, we get that
	\begin{equation*}
		P_u\bigl(\bar{S}_{L_1} \in [\sqrt{n}/K, K\sqrt{n}],\ \min_{i=1,\dots,L_1} \bar{S}_i \geq 0 \bigr)
		\geq
		\frac{1}{4} P_u\bigl(\min_{i=1,\dots,L_1} \bar{S}_i \geq 0 \bigr)
		\geq
		\frac{c(u+1)}{\sqrt{n}}
	\end{equation*}
	where we used Lemma~\ref{lem:positivity_general_walks:LB}. Proceeding similarly to bound the term involving the time-reversed walk, we obtain
	\begin{equation*}
		P_u\bigl(\bar{S}_n = v,\ \min_{i=1,\dots,n} \bar{S}_i \geq 0 \bigr)
		\geq
		\frac{c}{\sqrt{n}}\cdot \frac{c'(u+1)}{\sqrt{n}}\cdot \frac{c''(v+1)}{\sqrt{n}}
	\end{equation*}
	which is the claim in this case as \((u-v)^2 \leq n\).
    
    \medskip
    \noindent\textbf{Finally, suppose that \(\boldsymbol{ u\in [0,\sqrt{n}],\ v\in [\sqrt{n},n^{\alpha}] } \), or that \(\boldsymbol{ v\in [0,\sqrt{n}],\ u\in [\sqrt{n},n^{\alpha}] }\).}
    Both cases are treated the same way, as the second case is the time reverse of the first one. We thus only treat \(u\in [0,\sqrt{n}]\). Let \(L = \lfloor n/3\rfloor\). Let \(K>0\) to be fixed later. Then, for any \(x\in [\sqrt{n}/K, K\sqrt{n}]\), Theorem~\ref{thm:Z_bridge:small_ball} gives that,
	\begin{equation*}
		P\bigl(\bar{S}_{n} = v,\ \min_{i=L+1,\dots, n} S_i \geq 0 \given S_{L} = x\bigr)
		\geq
		\frac{C}{\sqrt{n}}e^{-cv^2/n},
	\end{equation*}
	for some \(C,c>0\) depending only on \(K,c_0,\delta_0,a\). We obtain
    \begin{align*}
		P_u\bigl(\bar{S}_n = v,\ \min_{i=1,\dots,n} \bar{S}_i \geq 0 \bigr)
        &\geq
		\frac{C}{\sqrt{n}}e^{-cv^2/n} \sum_{\sqrt{n}/K \leq x\leq K\sqrt{n}} P_u\bigl(\bar{S}_{L} = x,\ \min_{1,\dots,L} \bar{S}_i \geq 0 \bigr)
		\\
		&=
		\frac{C}{\sqrt{n}}e^{-cv^2/n} P_u\bigl(\bar{S}_{L} \in [\sqrt{n}/K, K\sqrt{n}],\ \min_{i=1,\dots,L_1} \bar{S}_i \geq 0 \bigr)
	    \\
        &\geq
		\frac{C(u+1)}{n}e^{-cv^2/n},
	\end{align*}
	as in the previous case. This is the claim in this last case as \(\frac{v^2}{n} \leq  \frac{(u-v)^2}{n}+1\).
\end{proof}

\subsection{Upper bound}
\label{subsec:positivity_fixed_endpoint:UB}

\begin{lemma}
\label{lem:positivity_fixed_endpoint:UB}
	Let \(c_0,\delta_0>0\), \(a\in [0,1]^{\Z}\) be an irreducible, aperiodic sequence. Let \(\alpha\in (1/2,2/3)\). There are \(c,C\in (0,+\infty)\), \(n_0\geq 1\), such that for any \(n\geq n_0\) and \(X_1,\dots, X_n\) independent sequence of random variables with laws in \(\calM_{\delta_0,c_0}^a\), one has the following. For any \(0\leq u,v\leq n^{\alpha}\),
	\begin{equation*}
		P_u\bigl(\bar{S}_n = v,\ \min_{i=1,\dots,n} \bar{S}_i \geq 0 \bigr)
		\leq
		\frac{C \min(u+1,\sqrt{n})\min(v+1,\sqrt{n})}{n^{3/2}}e^{-c(u-v)^2/n}.
	\end{equation*}
\end{lemma}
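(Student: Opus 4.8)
The proof mirrors the case analysis of the lower bound (Lemma~\ref{lem:positivity_fixed_endpoint:LB}), according to whether both endpoints are large ($\gtrsim\sqrt n$), both are small ($\lesssim\sqrt n$), or one of each, and uses only the free–endpoint positivity estimates (Lemmas~\ref{lem:positivity_general_walks:UB} and~\ref{lem:positivity_general_walks:moments}), the inhomogeneous LLT (Theorem~\ref{thm:inhomo_LLT}), and the crude tail bound (Lemma~\ref{lem:very_large_tails_UB}). First I record two preliminaries. (i) Since the increments have laws in $\calM_{\delta_0,c_0}^a$, they satisfy uniformly $E(\bar X_i)=0$, $E(\bar X_i^4)\le A$ and $E(\bar X_i\mathds 1_{\bar X_i>0})\ge s$ for some $A,s\in(0,\infty)$ (the last by irreducibility of $a$, via Lemma~\ref{lem:equivalent_second_moment_cond}); these properties, hence Lemmas~\ref{lem:positivity_general_walks:UB} and~\ref{lem:positivity_general_walks:moments}, are stable under time reversal (the reversed walk has laws in $\calM_{\delta_0,c_0}^{\tilde a}$ with $\tilde a$ still irreducible and aperiodic). (ii) Combining Theorem~\ref{thm:inhomo_LLT} on the Gaussian window with Lemma~\ref{lem:very_large_tails_UB} outside it (using $\alpha>1/2$) yields a uniform pointwise bound $\sup_z P(\bar S_{k+1,l}=z)\le C/\sqrt{l-k}$ whenever $l-k$ is of order $n$.

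\textbf{A key sub-lemma.} For all $u\ge0$ and $m\ge m_0$, $\sup_x P_u(\bar S_m=x,\ \tau>m)\le C(u+1)/m$. Indeed, splitting at $\lceil m/2\rceil$ and using the Markov property,
\[
	P_u(\bar S_m=x,\tau>m)=\sum_y P_u\bigl(\bar S_{\lceil m/2\rceil}=y,\tau>\lceil m/2\rceil\bigr)\,P\bigl(\bar S_m=x,\min_{[\lceil m/2\rceil,m]}\bar S_i\ge0\mid\bar S_{\lceil m/2\rceil}=y\bigr);
\]
bounding the second factor by $C/\sqrt m$ (preliminary (ii), dropping positivity on the second half) and summing out the first factor via $\sum_y P_u(\bar S_{\lceil m/2\rceil}=y,\tau>\lceil m/2\rceil)=P_u(\tau>\lceil m/2\rceil)\le c(u+1)/\sqrt m$ (Lemma~\ref{lem:positivity_general_walks:UB}) gives the claim. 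The same split, keeping the transition in the sharp form $\le\frac{C}{\sqrt m}e^{-c(x-y)^2/m}$ and splitting the $y$–sum at $y=x/2$, also yields the mildly decorated bound $P_u(\bar S_m=x,\tau>m)\le\frac{C(u+1)}{m}e^{-cx^2/m}$ for $x\le\sqrt m$.

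\textbf{The three regimes.} If $\sqrt n\le u,v\le n^\alpha$ one simply drops the positivity event and applies Theorem~\ref{thm:inhomo_LLT} (the hypothesis $|u-v|\le n^\alpha$ holds): $P_u(\bar S_n=v,\tau>n)\le P_u(\bar S_n=v)\le\frac{C}{\sqrt n}e^{-c(u-v)^2/n}$, which is the claim as $\min(u+1,\sqrt n)=\min(v+1,\sqrt n)=\sqrt n$. If $0\le u,v\le K\sqrt n$ (with $K$ a large constant chosen so that the three regimes overlap), one uses the midpoint decomposition together with time reversal on the second half to write $P_u(\bar S_n=v,\tau>n)\le\sum_x P_u(\bar S_{\lceil n/2\rceil}=x,\tau>\lceil n/2\rceil)\,P_v(\backvec S_{n-\lceil n/2\rceil}=x,\backvec\tau>n-\lceil n/2\rceil)$ and then Cauchy--Schwarz,
\[
	P_u(\bar S_n=v,\tau>n)\le\Bigl(\sup_x P_u(\bar S_{\lceil n/2\rceil}=x,\tau>\cdot)\,P_u(\tau>\cdot)\Bigr)^{1/2}\Bigl(\sup_x P_v(\backvec S=x,\backvec\tau>\cdot)\,P_v(\backvec\tau>\cdot)\Bigr)^{1/2}\le\frac{C(u+1)(v+1)}{n^{3/2}}
\]
by the key sub-lemma and Lemma~\ref{lem:positivity_general_walks:UB}; since $u,v\le K\sqrt n$ this is $\le C'\min(u+1,\sqrt n)\min(v+1,\sqrt n)/n^{3/2}$, and $(u-v)^2\le Cn$ makes the Gaussian factor harmless. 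The remaining regime, $0\le u<\sqrt n<K\sqrt n<v\le n^\alpha$ (the case $0\le v<\sqrt n<u$ following by time reversal), is the delicate one: here one must recover the genuine Gaussian factor $e^{-c(u-v)^2/n}\asymp e^{-cv^2/n}$. Split at $L=\lceil n/2\rceil$ and bound the second half, after reversal, by $P_v(\backvec S_{n-L}=x)\le\frac{C}{\sqrt n}e^{-c(v-x)^2/n}$ (dropping positivity, essentially lossless since $v\gg\sqrt n$; the contribution of $|v-x|>n^\alpha$ is negligible against the target once the output constants are small, by $\alpha>1/2$). For $x\le v/2$, using $e^{-c(v-x)^2/n}\le e^{-cv^2/4n}$ and $\sum_x P_u(\bar S_L=x,\tau>L)=P_u(\tau>L)\le c(u+1)/\sqrt n$ gives $\le\frac{C(u+1)}{n}e^{-cv^2/4n}$, which lies within the target once the output exponent constant is $\le c/4$ (note $(u-v)^2\le v^2$). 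For $x>v/2$ (so $x\gg\sqrt n$) one invokes the bound \emph{already established at time $L$} — Case A if $u\ge\sqrt L$, the inductive case-C bound if $u<\sqrt L$ — both of the form $P_u(\bar S_L=x,\tau>L)\le\frac{C\min(u+1,\sqrt L)}{L}e^{-c(u-x)^2/L}$, and the Gaussian convolution $\sum_x e^{-c(u-x)^2/L}e^{-c(v-x)^2/n}\le C\sqrt n\,e^{-\tilde c(u-v)^2/n}$ (with $\tilde c\ge$ the output constant, \emph{provided} the latter is chosen $\le$ the transition constant) produces $\le\frac{C\cdot(\text{prefactor})\cdot(u+1)}{n}e^{-c(u-v)^2/n}$. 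This last regime is thus organized as an induction on $n$ descending by halving, in which the second half is always controlled by absolute constants and only the first half uses the inductive hypothesis, so that the exponent constant is preserved and the prefactor constant obeys a closing recursion (if the midpoint transition constant is not subcritical, split slightly off-center to make the effective prefactor constant $<1$).

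\textbf{Main obstacle.} The crux is precisely this last regime, for intermediate endpoint values $v$ between $\sqrt n$ and $\sqrt{n\log n}$: one must simultaneously keep the factor $(u+1)/\sqrt n$ coming from the positivity constraint near the low endpoint and the Gaussian factor $e^{-c(u-v)^2/n}$, and neither the crude ``drop-positivity $+$ LLT'' bound nor the uniform sup-bound of the key sub-lemma suffices alone — this is what forces the inductive structure and the careful bookkeeping of the exponent constants through the Gaussian convolutions. Everything else reduces to routine applications of Theorem~\ref{thm:inhomo_LLT}, Lemma~\ref{lem:very_large_tails_UB}, and Lemmas~\ref{lem:positivity_general_walks:UB}--\ref{lem:positivity_general_walks:moments}.
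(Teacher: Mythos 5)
Your case structure matches the paper's (both endpoints large, both small, mixed), and your first two regimes are fine: the Cauchy--Schwarz gluing for the both-small case is a minor variant of the paper's three-point split at \(n/3\) and \(2n/3\), and both work. The genuine gap is in the third (mixed) regime, which is also where all the difficulty lies, and your dyadic-induction scheme does not close.

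Concretely: at scale \(L=\gamma n\) your inductive hypothesis reads \(P_u(\bar S_L=x,\ \tau>L)\le \frac{C_L(u+1)}{L}e^{-c_L(u-x)^2/L}\) for \(u\) small and \(x\) large. Convolving with the LLT for the second leg, the exponent recursion stabilizes (at the LLT exponent), but the prefactor recursion is \(C_n\ge \kappa\, C_L\) with \(\kappa=1/\sqrt\gamma>1\) for every split \(\gamma\in(0,1)\). The number of halvings before the intermediate endpoint drops below \(K\sqrt{L_k}\) is \(\Theta(\log n)\), so the prefactor blows up like \(\kappa^{\Theta(\log n)}=\sqrt{n/n_0}\) \emph{regardless of} \(\gamma\); your parenthetical fix (``split slightly off-center to make the effective prefactor constant \(<1\)'') cannot work, since the total \(\sqrt n\)-loss is invariant under reparametrizing the split. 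The structural reason is that you only charge the positivity constraint on the first leg, which yields \((u+1)/\sqrt L\) rather than the required \((u+1)/\sqrt n\), so each level of the recursion ``forgets'' a fraction of the positivity cost; after unwinding, your bound degenerates to the trivial \((u+1)/\sqrt n\) bound. (A secondary, and less serious, issue: the intermediate point \(x\) can exceed the LLT window \(L^\alpha\) at scale \(L\); this is repairable by invoking Theorem~\ref{thm:inhomo_LLT} at scale \(L\) with a larger exponent \(\alpha'\in(\alpha,2/3)\), but you need to say so.)

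The paper's proof of the mixed regime avoids recursion entirely. It decomposes according to the stopping time \(\tau'=\min\{k:\ \bar S_k\ge\sqrt n\}\) and the stopping location \(\bar S_{\tau'}\in[\sqrt n,2\sqrt n]\) on the bounded-overshoot event. Two facts are then combined in one stroke: (i) uniformly over the stopping data, \(P(\bar S_n=v\mid \bar S_k=x)\le\frac{C}{\sqrt n}e^{-cv^2/n}\), since \(|v-x|\ge v-2\sqrt n\gtrsim v\); and (ii) \(\sum_{k,x}P_u(\tau>k,\ \tau'=k,\ \bar S_k=x)=P_u(\tau>\tau',\ \tau'\le n)\le C\,P_u(\tau>n)\le C(u+1)/\sqrt n\). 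Step (ii) is the key: it inverts the continuation probability at the stopping time via the \emph{lower} bound of Lemma~\ref{lem:positivity_general_walks:LB}, which lets one harvest the full \((u+1)/\sqrt n\) positivity cost and the full Gaussian factor simultaneously, with no iterated loss. That trick has no counterpart in your proposal and is what you are missing.
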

\begin{proof}
    Always assume \(n\) large enough so that everything works out. We do the same kind of case separation as in the proof of Lemma~\ref{lem:positivity_fixed_endpoint:LB}.

    \medskip
    \noindent\textbf{Suppose first that \(\boldsymbol{ n^{\alpha} \geq u,v\geq \sqrt{n}}\).}
    In this case,
    \begin{equation*}
        P_u\bigl(\bar{S}_n = v,\ \min_{i=1,\dots,n} \bar{S}_i \geq 0 \bigr)
        \leq
        P_u\bigl(\bar{S}_n = v\bigr).
    \end{equation*}
    The claim then follows from Theorem~\ref{thm:inhomo_LLT}.
    
    \medskip
    \noindent\textbf{Suppose then that \(\boldsymbol{ 0\leq u,v\leq 3\sqrt{n}}\).}
    Let \(L_1 = \lceil \frac{n}{3} \rceil\), \(L_2 = \lfloor \frac{2n}{3}\rfloor\). Then, by the CLT (Theorem~\ref{thm:inhomog_CLT}), for any \(x,y\), \(P(S_{L_2} = y\given S_{L_1} = x) \leq \frac{c}{\sqrt{n}}\) for some \(c>0\) depending only on \(c_0,\delta_0,a\). Then, by the Markov property
	\begin{align*}
		P_u\bigl(\bar{S}_n = v,\ \min_{i=1,\dots,n} \bar{S}_i \geq 0 \bigr)
		&\leq
		\sum_{x,y} P_u\bigl(\bar{S}_{L_1} = x,\ \min_{i=1,\dots,L_1} \bar{S}_i \geq 0 \bigr)P(S_{L_2} = y\given S_{L_1} = x)
		\\
		&\qquad \cdot P\bigl(\bar{S}_{n} = v,\ \min_{i=L_2+1,\dots,n} \bar{S}_i \geq 0 \bgiven S_{L_2} = y\bigr)
		\\
		&\leq
		\tfrac{c}{\sqrt{n}}\sum_{x,y} P_u\bigl(\bar{S}_{L_1} = x,\ \min_{i=1,\dots,L_1} \bar{S}_i \geq 0 \bigr)
		\\
		&\qquad \cdot P\bigl(\backvec{S}_{n-L_2} = y,\ \min_{i=1,\dots, n-L_2} \backvec{S}_{i} \geq 0 \bgiven \backvec{S}_{0} = v \bigr)
		\\
		&=
		\tfrac{c}{\sqrt{n}}P_u\bigl(\min_{i=1,\dots,L_1} \bar{S}_i \geq 0 \bigr)P\bigl(\min_{i=1,\dots, n-L_2} \backvec{S}_{i} \geq 0 \bgiven \backvec{S}_{0} = v \bigr)
		\\
		&\leq 
		\tfrac{c(u+1)(v+1)}{n^{3/2}}
	\end{align*}
	where \(\backvec{S}_{i} = \bar{S}_{n-i}\) is the time reversal of \(\bar{S}\), and we used Lemma~\ref{lem:positivity_general_walks:UB} in the last line. This is the claim in this case as \((u-v)^2\leq 9n\).
    
    \medskip
    \noindent\textbf{Finally, suppose that \(\boldsymbol{ u\in [0,\sqrt{n}],\ v\in [3\sqrt{n},n^{\alpha}] } \), or that \(\boldsymbol{ v\in [0,\sqrt{n}],\ u\in [3\sqrt{n},n^{\alpha}] }\).}
    By the same considerations as in the proof of Lemma~\ref{lem:positivity_fixed_endpoint:LB}, we can treat only the case \(u\in [0,\sqrt{n}]\). Define
    \begin{gather*}
        \tau' = \min\{k\in \{0,\dots, n\}:\ \bar{S}_k \geq \sqrt{n}\},
        \quad
        A = \{X_{\tau'} \leq \sqrt{n}\},
        \\
        \tau = \min\{k\geq 1:\ \bar{S}_k < 0\}.
    \end{gather*}
    Then, by a union bound and uniform exponential tails,
    \begin{equation*}
        P(A^c,\ \tau' \leq n)
        \leq
        ne^{-c\sqrt{n}}
        \leq
        \tfrac{C}{n}e^{-c v^2/n},
    \end{equation*}
    as \(v \leq n^{\alpha}\), \(\alpha\leq 2/3\).
    Then, as \(v> \sqrt{n}\),
    \begin{align*}
        &P_u\bigl(\bar{S}_n = v,\ \tau >n \bigr)
        \\
        &\quad \leq
        P(A^c,\ \tau' \leq n) +
        \sum_{k=0,\dots, n}\sum_{x\in [\sqrt{n},2\sqrt{n}]}P_u\bigl( \tau >k ,\ \tau'=k,\ \bar{S}_k = x\bigr) P( \bar{S}_n = v \given \bar{S}_k = x).
    \end{align*}
    Now, for \(x\in [\sqrt{n},2\sqrt{n}]\), recall that \(v\in [3\sqrt{n},n^{\alpha}]\) so that
    \begin{equation*}
        \sqrt{n}\leq v-x\leq n^{\alpha},
        \quad
        \tfrac{v^2}{9} \leq (v-x)^2 \leq v^2,
    \end{equation*}
    and thus \(P( \bar{S}_n = v \given \bar{S}_k = x)\) is upper bounded by
    \begin{equation*}
        \begin{cases}
            \frac{C}{\sqrt{n-k}}e^{-c(v-x)^2/(n-k)} \leq \frac{C}{\sqrt{n}}e^{-cv^2/n} & \text{ if } n-k\geq |v-x|^{1/\alpha},
            \\
            e^{-c(v-x)^2/(n-k)} \leq \tfrac{C}{\sqrt{n}}e^{-c v^2/n} & \text{ if }  \tfrac{1}{\rho}|v-x| \leq n-k < (v-x)^{1/\alpha} ,
            \\
            e^{-c|v-x|} \leq \frac{C}{\sqrt{n}}e^{-c v^2/n}  & \text{ if } n-k < \tfrac{1}{\rho}|v-x|,
        \end{cases}
    \end{equation*}
    where \(\rho >0 \) depends only on \(\delta_0,c_0,a\), and we used Theorem~\ref{thm:inhomo_LLT}, Lemma~\ref{lem:very_large_tails_UB}, the fact that \(r\mapsto re^{-cr^2}\) is upper bounded in the first case, the fact that in the second case \(\sqrt{n}e^{-cv^2/2(n-k)} \leq \sqrt{n}e^{-cv^{2-\frac{1}{\alpha}}} \leq \sqrt{n}e^{-cn^{\frac{2\alpha-1}{2\alpha}}} \leq C\) as \(\alpha>1/2\), and the fact that in the third case \(e^{-c|v-x|}\leq e^{-c\sqrt{n}}\leq \frac{C}{\sqrt{n}}e^{-cn^{2\alpha-1}} \leq \frac{C}{\sqrt{n}}e^{-cv^2/n}\) as \(\alpha < 2/3\). Plugging this in the previous bound, we get
    \begin{align*}
        &P_u\bigl(\bar{S}_n = v,\ \min_{i=1,\dots,n} \bar{S}_i \geq 0 \bigr)
        \\
        &\quad \leq
        \tfrac{C}{n}e^{-c v^2/n} +
        \tfrac{C}{\sqrt{n}}e^{-cv^2/n} \sum_{k=0,\dots, n}\sum_{x\in [\sqrt{n},2\sqrt{n}]}P_u\bigl(\tau >k,\ \tau'=k,\ \bar{S}_k = x\bigr)
        \\
        &\quad \leq
        \tfrac{C}{n}e^{-c v^2/n} +
        \tfrac{C}{\sqrt{n}}e^{-cv^2/n} P_u(\tau >\tau', \tau'\leq n).
    \end{align*}
    Remains to upper bound the probability in the last expression by \(\frac{C(u+1)}{\sqrt{n}}\). By Lemma~\ref{lem:positivity_general_walks:LB}, we have that for any \(k\in \{0,\dots,n\}\), and any \(x\geq \sqrt{n}\) with \(P_u(\bar{S}_k = x)>0\),
    \begin{equation*}
        P_u\bigl(\min_{i=k,\dots, n} \bar{S}_i\geq 0 \bgiven \bar{S}_k = x\bigr)
        \geq
        c>0,
    \end{equation*}
    where \(c\) depends only on \(\delta_0,c_0,a\). Then, using this and Markov's property,
    \begin{align*}
        P_u(\tau >\tau', \tau'\leq n)
        &\leq
        \sum_{k=0}^{n}\sum_{x\geq \sqrt{n}} P_u(\tau' = k, \bar{S}_k = x, \tau >k)\frac{P_u\bigl(\min_{i=k,\dots, n} \bar{S}_i\geq 0 \bgiven \bar{S}_k = x\bigr)}{c}
        \\
        &\leq
        C\sum_{k=0}^{n}\sum_{x\geq \sqrt{n}} P_u(\tau' = k,\ \bar{S}_k = x,\ \tau >n)
        \\
        &=
        CP_u(\tau' \leq n,\ \tau >n)
        \leq
        CP_u(\tau >n)
        \leq
        \frac{C(u+1)}{\sqrt{n}},
    \end{align*}
    by Lemma~\ref{lem:positivity_general_walks:UB}. This concludes the proof.
\end{proof}

\section{Trajectory estimates: excursions}
\label{sec:excursions}

\subsection{Small ball}
\label{subsec:excursions:small_ball}

\begin{lemma}
\label{lem:excursions:small_ball:LB}
    Let \(c_0,\delta_0>0\), \(a\in [0,1]^{\Z}\) be an irreducible, aperiodic sequence. Let \(K>0\). There are \(C,c\in (0,+\infty)\), \(\lambda_0,n_0\geq 0\), such that the following holds. For any \(n\geq n_0\), \(X_1,\dots, X_n\) independent sequence of random variables with laws in \(\calM_{\delta_0,c_0}^a\), any \(\lambda_0\leq \lambda \leq K\sqrt{n}\), and any \(0\leq u,v \leq \lambda\) with \(P_u(\bar{S}_n = v)>0\),
    \begin{multline*}
        P_u\bigl( 0\leq \min_{i=1,\dots,n} \bar{S}_i,\ \max_{i=1,\dots,n} \bar{S}_i\leq \lambda,\ \bar{S}_n = v \bigr)
        \\
        \geq
        \frac{C(\min(u, \lambda-u)+1)(\min(v, \lambda-v)+1)}{\lambda^3} \exp(-\tfrac{cn}{\lambda^2}).
    \end{multline*}
\end{lemma}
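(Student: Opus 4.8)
The idea is a three‑block decomposition of the time interval. Write $P\equiv P_0$ and $r(w)=\min(w,\lambda-w)+1$; constants depend only on $\delta_0,c_0,a,K$, and $n,\lambda$ are always taken large. The tube reflection $x\mapsto\lambda-x$ and time reversal both preserve the hypotheses — the reflected/reversed step laws lie in $\calM^{\tilde a}_{\delta_0,c_0}$ for a reflected majorant $\tilde a$ that is again irreducible and aperiodic, and the uniform fourth–moment and variance bounds (all the ``general‑walk'' lemmas use) are preserved — so I use them freely. Fix $T_1\asymp\lambda^2$ with $3T_1\le n$; in the leftover regime $\lambda\gtrsim\sqrt n$ one has $n\asymp\lambda^2$ and argues identically after replacing $[0,\lambda]$ by a sub‑tube of width $\asymp\sqrt n$ about $\lambda/2$ (I suppress this case). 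Put $M=n-2T_1\ge n/2$ and let $W$ be the set of lattice points within $c_W\lambda$ of $\lambda/2$, $c_W$ small. By the Markov property at $T_1$ and $n-T_1$,
\[
  P\bigl(0\le\bar S_i\le\lambda\ \forall i\le n,\ \bar S_n=v\bigr)\ \ge\ \Bigl(\min_{w_1,w_2\in W}P_{w_1}(\mathcal T_M,\bar S_M=w_2)\Bigr)\cdot P_u(\mathcal T_{T_1},\bar S_{T_1}\in W)\cdot\sum_{w\in W}P_w(\mathcal T_{T_1},\bar S_{T_1}=v),
\]
where $\mathcal T_t=\{0\le\bar S_i\le\lambda\ \forall i\le t\}$ and — crucially — \emph{both} intermediate positions are summed out. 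For the first factor, shifting by $w_1$ and using that $w_1,w_2\in W$ are at distance $\asymp\lambda$ from both walls, $\mathcal T_M$ contains a symmetric tube $\{|\bar S_i|\le\rho\}$ with $\rho\asymp\min(\lambda,\sqrt M)\le\sqrt M$ and $|w_2-w_1|\le(1-\epsilon)\rho$, so Theorem~\ref{thm:small_ball_fixed_endpoint} gives $P_{w_1}(\mathcal T_M,\bar S_M=w_2)\ge\tfrac c\rho e^{-cM/\rho^2}\ge\tfrac{c'}{\lambda}e^{-c'n/\lambda^2}$ (here $\rho\asymp\lambda$ up to a $K$‑dependent constant, and $M\le n$). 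The third factor equals $P^{\leftarrow}_v(\mathcal T_{T_1},\bar S^{\leftarrow}_{T_1}\in W)$ for the reversed walk, so it has the same form as the second.

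\textbf{Escape estimate.} Everything thus reduces to: for $T_1\asymp\lambda^2$ and any $0\le u\le\lambda$, $\ P_u(\mathcal T_{T_1},\bar S_{T_1}\in W)\ \ge\ c\,r(u)/\lambda$, since plugging this and its time reversal into the displayed product yields exactly $\asymp r(u)r(v)\lambda^{-3}e^{-cn/\lambda^2}$. By the reflection we may assume $u\le\lambda/2$, so $r(u)\asymp u+1$; split $[0,T_1]=[0,T']\cup[T',T_1]$ with $T'=\lceil\epsilon_3^2\lambda^2\rceil$ and $T_1-T'\asymp\lambda^2$, for a small constant $\epsilon_3$ fixed below. \emph{First leg — escaping the wall into the interior.} If $u>\epsilon_3\lambda$, set $y^\ast:=u$ (already at distance $\ge\epsilon_3\lambda$ from both walls). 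If $u\le\epsilon_3\lambda$, work under $P_u(\cdot\mid\tau>T')$, $\tau=\min\{k\ge1:\bar S_k<0\}$, under which $(\bar S_i)$ is a submartingale: Lemma~\ref{lem:positivity_general_walks:moments} gives $E_u[\max_{i\le T'}\bar S_i^2\mid\tau>T']\le12u^2+cT'\le(12+c)\epsilon_3^2\lambda^2$ and $E_u[\bar S_{T'}\mid\tau>T']\ge c'\sqrt{T'}\ge c'\epsilon_3\lambda$, so by Chebyshev and Paley–Zygmund there is a constant $L$ with, after setting $\epsilon_3:=\tfrac1{2L}$ (so $L\epsilon_3\lambda=\lambda/2$), $P_u(\max_{i\le T'}\bar S_i\le\tfrac\lambda2,\ \bar S_{T'}\ge\tfrac{c'\epsilon_3}2\lambda\mid\tau>T')\ge c''$; together with $P_u(\tau>T')\ge c(u+1)/\sqrt{T'}$ (Lemma~\ref{lem:positivity_general_walks:LB}, valid as $u\le\sqrt{T'}$) this gives $P_u\bigl(0\le\bar S_i\le\lambda\ \forall i\le T',\ \bar S_{T'}\in[\epsilon_4\lambda,\lambda/2]\bigr)\ge c(u+1)/\lambda$ with $\epsilon_4:=c'\epsilon_3/2$; every such endpoint $y^\ast$ lies at distance $\ge\epsilon_4\lambda$ from both walls. \emph{Second leg — from the interior into $W$.} For any $y^\ast$ at distance $\ge\epsilon_4\lambda$ from both walls, reversing this leg and shifting by $\lambda/2$ turns ``$\bar S_{T'}=y^\ast$, $\bar S_{T_1}=w\in W$, $\mathcal T$'' into a centred walk started at $\approx0$ confined to $[-\lambda/2,\lambda/2]$ with endpoint $y^\ast-\lambda/2$, $|y^\ast-\lambda/2|\le(1-\epsilon_4)\lambda/2$; Theorem~\ref{thm:small_ball_fixed_endpoint} (taking its free parameter $\le\epsilon_4$, and $T_1-T'\ge\lambda^2/4$) gives $\ge c/\lambda$ per endpoint $w\in W$, hence $P_{y^\ast}(\mathcal T_{T_1-T'},\bar S_{T_1-T'}\in W)\ge \tfrac c\lambda|W|\ge c'''$ uniformly in $y^\ast$. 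Concatenating (summing over $y^\ast$) gives $P_u(\mathcal T_{T_1},\bar S_{T_1}\in W)\ge c'''\cdot c(u+1)/\lambda$.

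\textbf{Main obstacle.} The decomposition and the central factor are routine — the latter is a single application of Theorem~\ref{thm:small_ball_fixed_endpoint}, which carries the entire exponential $e^{-cn/\lambda^2}$ — and the essential content is the escape estimate, specifically its first leg. There one must simultaneously (i) keep the walk positive with probability $\asymp(u+1)/\lambda$ (Lemma~\ref{lem:positivity_general_walks:LB}), (ii) forbid it reaching the opposite wall, which works because over the \emph{short} time $T'\asymp\epsilon_3^2\lambda^2$ Lemma~\ref{lem:positivity_general_walks:moments} pins the conditional maximum at $O(\epsilon_3\lambda)\ll\lambda$, and (iii) still make it reach a fixed fraction of $\lambda$ at time $T'$ (Paley–Zygmund on the conditional first and second moments) — the delicate part being to choose $\epsilon_3$ (hence the geometry of the whole argument) small enough relative to the class‑dependent constants of Lemmas~\ref{lem:positivity_general_walks:LB} and~\ref{lem:positivity_general_walks:moments}. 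The remaining ingredients — the case $\lambda\gtrsim\sqrt n$ and the lattice bookkeeping for the shifted supports — are fiddly but pose no real difficulty.
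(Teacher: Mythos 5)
Your proposal is correct and follows the same overall three-block decomposition as the paper's proof: a middle block of length $\asymp n$ whose small-ball contribution carries the entire exponential $e^{-cn/\lambda^2}$ (via Theorem~\ref{thm:small_ball_fixed_endpoint}), flanked by two ``escape'' blocks of length $\asymp\lambda^2$ each contributing $\asymp(\min(u,\lambda-u)+1)/\lambda$, the right one by time reversal. The essential content really is the escape estimate, and both proofs further split it into a short first leg (reaching a fixed fraction of $\lambda$ from a boundary starting point $u$) and a second leg (a macroscopic small-ball bringing the walk to the centre). Where you deviate is in the first leg: the paper applies Lemma~\ref{lem:positivity_fixed_endpoint:LB} directly to get $P_u(\min_{i\le L}\bar S_i\ge 0,\ \bar S_L\ge\epsilon\lambda)\ge c(u+1)/\lambda$ and then controls the conditional maximum via Doob's submartingale inequality and the moment bound of Lemma~\ref{lem:positivity_general_walks:moments}, whereas you only invoke the more elementary free-endpoint positivity Lemma~\ref{lem:positivity_general_walks:LB} to get $P_u(\tau>T')\gtrsim(u+1)/\lambda$ and then use the conditional first and second moments from Lemma~\ref{lem:positivity_general_walks:moments} together with Paley--Zygmund and Chebyshev to show the conditional walk both reaches height $\epsilon_4\lambda$ and stays below $\lambda/2$ with uniformly positive probability. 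This is a genuine (small) simplification: you avoid Lemma~\ref{lem:positivity_fixed_endpoint:LB} entirely, at the cost of the standard Paley--Zygmund bookkeeping; both routes ultimately hinge on the same moment estimates and the choice of $\epsilon_3$ (resp.\ $\epsilon$) small relative to the class-dependent constants. You also substitute Theorem~\ref{thm:small_ball_fixed_endpoint} for the paper's Theorem~\ref{thm:Z_bridge:small_ball} in the second leg; that is harmless since over a block of length $\asymp\lambda^2$ both yield an $\Omega(1/\lambda)$ per-endpoint bound. The two points you explicitly suppress — the regime $\lambda\gtrsim\sqrt n$ and the lattice bookkeeping — are exactly what the paper's choice $L_1=\lfloor\min(\lambda^2,n/3)\rfloor$, $L_2=\lceil\max(n-\lambda^2,2n/3)\rceil$ is designed to absorb, and they do require a moment's care (e.g.\ ensuring the inner window parameter in Theorem~\ref{thm:small_ball_fixed_endpoint} is $\le\sqrt{M}$ when $K$ is large), so if you were to write this up fully you should not gloss over them; but the structure of your argument covers them.
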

\begin{figure}
	\centering
	\includegraphics{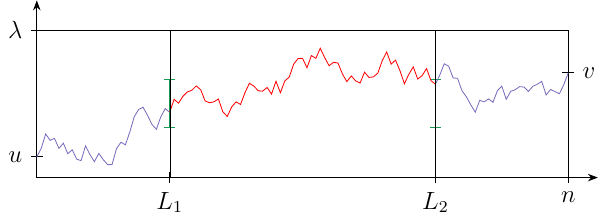}
	\caption{Proof of Lemma~\ref{lem:excursions:small_ball:LB}: wthe trajectory is forced to pass through the interval \([\tfrac13\lambda,\tfrac23\lambda]\) at times \(L_1\) and \(L_2\), allowing the use of Theorem~\ref{thm:small_ball_fixed_endpoint} to control the middle piece.}
\end{figure}
\begin{proof}
    Let \(L_1 = \lfloor\min( \lambda^2, n/3)\rfloor\), \(L_2 = \lceil\max( n-\lambda^2, 2n/3)\rceil\). Then, for any \(\frac{\lambda}{3}\leq x,y\leq \frac{2\lambda}{3}\) with \(P_u( \bar{S}_{L_1} = x, \bar{S}_{L_2} = y )>0\),
    \begin{equation*}
        P\bigl(0\leq \min_{i=L_1,\dots,L_2} \bar{S}_i,\ \max_{i=L_1,\dots,L_2} \bar{S}_i\leq \lambda,\ \bar{S}_{L_2} = y \bgiven \bar{S}_{L_1} = x\bigr)
        \geq
        \frac{C}{\lambda}\exp(-\tfrac{cn}{\lambda^2})
    \end{equation*}
    by Theorem~\ref{thm:small_ball_fixed_endpoint}. Thus, restricting to \(S_{L_1}, S_{L_2} \in [\lambda/3,2\lambda/3]\), and using the Markov property and the previous display,
    \begin{align*}
        &P_u\bigl( 0\leq \min_{i=1,\dots,n} \bar{S}_i,\ \max_{i=1,\dots,n} \bar{S}_i\leq \lambda,\ \bar{S}_n = v \bigr)
        \\
        &\qquad\geq
        \frac{C}{\lambda}\exp(-\tfrac{cn}{\lambda^2})
        P_u\bigl( 0\leq \min_{i=1,\dots,L_1} \bar{S}_i,\ \max_{i=1,\dots,L_1} \bar{S}_i\leq \lambda,\ 3\bar{S}_{L_1} \in [\lambda,2\lambda] \bigr)
        \\
        &\qquad\qquad\cdot P\bigl( 0\leq \min_{i=1,\dots,L_1} \backvec{S}_i,\ \max_{i=1,\dots,L_1} \backvec{S}_i\leq \lambda,\ 3\backvec{S}_{L_1} \in [\lambda,2\lambda] \bgiven \backvec{S}_{0} = v\bigr)
    \end{align*}
    where we introduced the time-reversed walk \(\backvec{S}_{i} = \bar{S}_{n-i}\). We now claim that
    \begin{equation*}
        P_u\bigl( 0\leq \min_{i=1,\dots,L_1} \bar{S}_i,\ \max_{i=1,\dots,L_1} \bar{S}_i\leq \lambda,\ 3\bar{S}_{L_1} \in [\lambda,2\lambda] \bigr)
        \geq
        \frac{C(\min(u,\lambda-u) +1)}{\lambda},
    \end{equation*}
    and similarly for the term involving the time-reversed walk, which would conclude the proof. We prove the bound for the walk, the time-reversed walk is treated the same way. Moreover, we assume that \(u\leq \lambda/2\), the other case being the same after a ceiling-to-floor change of point of view. Let \(\epsilon>0\) small to be fixed later, and take \(L = \lfloor \epsilon^2 \lambda^2\rfloor\).
    
    Then, for any \(x\in [\epsilon\lambda, (1-\epsilon)\lambda]\) with \(P_u(S_L = x)>0\) (and \(\lambda\) large enough), Theorem~\ref{thm:Z_bridge:small_ball} gives that
    \begin{equation*}
        P\bigl( 0\leq \min_{i=L,\dots,L_1} \bar{S}_i,\ \max_{i=L,\dots,L_1} \bar{S}_i\leq \lambda,\ 3\bar{S}_{L_1} \in [\lambda,2\lambda] \given S_L = x\bigr)
        \geq
        c,
    \end{equation*}
    where \(c>0\) depends only on \(\epsilon,c_0,\delta_0,a\). So, using Markov's property and inclusion of events,
    \begin{multline*}
        P_u\bigl( 0\leq \min_{i=1,\dots,L_1} \bar{S}_i,\ \max_{i=1,\dots,L_1} \bar{S}_i\leq \lambda,\ 3\bar{S}_{L_1} \in [\lambda,2\lambda] \bigr)
        \\
        \geq
        c P_u\bigl( 0\leq \min_{i=1,\dots,L} \bar{S}_i,\ \max_{i=1,\dots,L} \bar{S}_i\leq \lambda,\ \bar{S}_{L} \in [\epsilon\lambda,(1-\epsilon)\lambda] \bigr)
        \\
        \geq
        c P_u\bigl( 0\leq \min_{i=1,\dots,L} \bar{S}_i,\ \bar{S}_{L} \geq \epsilon \lambda,\ \max_{i=1,\dots,L} \bar{S}_i\leq (1-\epsilon)\lambda \bigr).
    \end{multline*}
    Now, first note that if \(u\geq 2\epsilon \lambda\), Lemma~\ref{lem:micro_small_ball_walk_LB} and inclusion of events gives
    \begin{equation*}
        P_u\bigl( 0\leq \min_{i=1,\dots,L} \bar{S}_i,\ \bar{S}_{L} \geq \epsilon \lambda,\ \max_{i=1,\dots,L} \bar{S}_i\leq (1-\epsilon)\lambda \bigr)
        \geq
        c>0,
    \end{equation*}
    which is the wanted bound in this case.
    So we consider the case \(u\leq 2\epsilon \lambda\). Then, the last probability is equal to
    \begin{equation*}
        P_u\bigl( \min_{i=1,\dots,L} \bar{S}_i \geq 0,\ \bar{S}_{L} \geq \epsilon \lambda \bigr)
        \bigl(1-
        P_u\bigl( \max_{i=1,\dots,L} \bar{S}_i> (1-\epsilon)\lambda \bgiven 0\leq \min_{i=1,\dots,L} \bar{S}_i,\ \bar{S}_{L} \geq \epsilon \lambda \bigr) \bigr).
    \end{equation*}
    Moreover, by Lemma~\ref{lem:positivity_fixed_endpoint:LB},
    \begin{equation*}
        P_u\bigl( \min_{i=1,\dots,L} \bar{S}_i \geq 0,\ \bar{S}_{L} \geq \epsilon \lambda \bigr)
        \geq \frac{c(u+1)}{\lambda}
    \end{equation*}
    for some \(c>0\). Finally, as \((\bar{S}_{i})_{i=0}^L\) is a submartingale under\linebreak \(P_u\bigl(\cdot \bgiven 0\leq \min_{i=1,\dots,L} \bar{S}_i,\ \bar{S}_{L} \geq \epsilon \lambda \bigr)\),
    \begin{equation*}
        P_u\bigl( \max_{i=1,\dots,L} \bar{S}_i> (1-\epsilon)\lambda \bgiven 0\leq \min_{i=1,\dots,L} \bar{S}_i,\ \bar{S}_{L} \geq \epsilon \lambda \bigr)
        \leq
        \frac{E_u\bigl( \bar{S}_L^2 \bgiven 0\leq \min_{i=1,\dots,L} \bar{S}_i,\ \bar{S}_{L} \geq \epsilon \lambda \bigr)  }{(1-\epsilon)^2\lambda^2}
    \end{equation*}
    by Doob's submartingale inequality. Finally, repeating the proof of Lemma~\ref{lem:positivity_general_walks:moments} in the present context,
    \begin{equation*}
        E_u\bigl( \bar{S}_L^2 \bgiven 0\leq \min_{i=1,\dots,L} \bar{S}_i,\ \bar{S}_{L} \geq \epsilon \lambda \bigr)
        \leq
        C(u^2 + L)
    \end{equation*}
    for some \(C\) depending only on \(\delta_0,c_0,a\). We have obtained
    \begin{equation}
        P_u\bigl( 0\leq \min_{i=1,\dots,L} \bar{S}_i,\ \bar{S}_{L} \geq \epsilon \lambda,\ \max_{i=1,\dots,L} \bar{S}_i\leq (1-\epsilon)\lambda \bigr)
        \geq
        \frac{c(u+1)}{\lambda}\Bigl(1-\frac{5C\epsilon^2}{(1-\epsilon)^2}\Bigr).
    \end{equation}
    Taking \(\epsilon>0\) small enough allows to conclude.
\end{proof}

\begin{lemma}
\label{lem:excursions:small_ball:UB}
    Let \(c_0,\delta_0>0\), \(a\in [0,1]^{\Z}\) be an irreducible, aperiodic sequence. There are \(c, C\in (0,+\infty)\), \(n_0\geq 1\), such that one has the following. For any \(n\geq n_0\), \(\sqrt{n}\geq \lambda\geq \lambda_0\), any \(X_1,\dots, X_n\) independent sequence of random variables with laws in \(\calM_{\delta_0,c_0}^a\), and any \(0\leq u,v \leq \lambda\),
    \begin{multline*}
        P_u\bigl( 0\leq \min_{i=1,\dots,n} \bar{S}_i,\ \max_{i=1,\dots,n} \bar{S}_i\leq \lambda,\ \bar{S}_n = v \bigr)
        \\
        \leq
        \frac{C(\min(u, \lambda-u)+1)(\min(v, \lambda-v)+1)}{\lambda^3} \exp(-\tfrac{cn}{\lambda^2}).
    \end{multline*}
\end{lemma}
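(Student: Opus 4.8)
The plan is to mirror the three-block structure of the lower bound, Lemma~\ref{lem:excursions:small_ball:LB}. Set $L_1=\lfloor\min(\lambda^2,n/3)\rfloor$ and $L_2=\lceil\max(n-\lambda^2,2n/3)\rceil$; using $\lambda\le\sqrt n$ one checks that $L_1$ and $n-L_2$ are both of order $\lambda^2$, while $L_2-L_1$ is of order $n$ (indeed $L_2-L_1\ge n/3$). Applying the Markov property at times $L_1$ and $L_2$, the probability in the statement equals
\begin{multline*}
	\sum_{x,y}
	P_u\bigl(\bar S_{L_1}=x,\ 0\le\bar S_i\le\lambda\ \forall i\le L_1\bigr)
	\cdot P\bigl(\bar S_{L_2}=y,\ 0\le\bar S_i\le\lambda\ \forall L_1<i<L_2 \bgiven \bar S_{L_1}=x\bigr)
	\\
	\cdot P\bigl(\bar S_n=v,\ 0\le\bar S_i\le\lambda\ \forall L_2<i\le n \bgiven \bar S_{L_2}=y\bigr),
\end{multline*}
where the sum runs over the (finitely many) admissible lattice values $x,y\in[0,\lambda]$.

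First I would bound the middle factor uniformly. Conditionally on $\bar S_{L_1}=x$, the process $(\bar S_{L_1+k}-x)_{k\ge0}$ is the centred walk with increments in $\calM_{\delta_0,c_0}^a$ started at $0$, and the confinement $\bar S_{L_1+k}\in[0,\lambda]$ forces it into $[-x,\lambda-x]\subseteq[-\lambda,\lambda]$; since moreover $|y-x|\le\lambda$, Theorem~\ref{thm:small_ball_fixed_endpoint} bounds the middle factor by $\tfrac{C}{\lambda}e^{-c(L_2-L_1)/\lambda^2}\le\tfrac{C}{\lambda}e^{-cn/\lambda^2}$, uniformly in $x,y$ (when $\lambda^2\le n/3$ one has $L_2-L_1\ge n/3\ge\lambda^2$, so the hypothesis $\lambda\le\sqrt{L_2-L_1}$ of that theorem is met; in the narrow range $\lambda^2>n/3$ the factor $e^{-cn/\lambda^2}$ is bounded below, and the Local Limit Theorem~\ref{thm:inhomo_LLT} alone gives the middle factor $\le C/\sqrt{L_2-L_1}\le C'/\lambda$, which suffices). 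Pulling this bound out, the remaining double sum factorises: the $x$-sum is $P_u(0\le\bar S_i\le\lambda\ \forall i\le L_1)$, and the $y$-sum is, after the time reversal $\backvec S_i=\bar S_{n-i}$ and summing over the now free endpoint of the reversed walk, at most $P\bigl(0\le\backvec S_i\le\lambda\ \forall i\le n-L_2\bgiven \backvec S_0=v\bigr)$.

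It then remains to bound each of these two confinement probabilities by $\tfrac{C}{\lambda}(\min(u,\lambda-u)+1)$ and $\tfrac{C}{\lambda}(\min(v,\lambda-v)+1)$. They are of the same type: a centred walk of length of order $\lambda^2$, with increments $\bar X_i=X_i-E(X_i)$ (resp. their time reversals), started at a point of $[0,\lambda]$, confined to $[0,\lambda]$. These increments meet the hypotheses of Lemma~\ref{lem:positivity_general_walks:UB} with constants depending only on $\delta_0,c_0,a$: uniform exponential moments give a uniform fourth-moment bound, and irreducibility of $a$ gives a uniform lower bound on the variance, hence $E(\bar X_i\mathds 1_{\bar X_i>0})\ge s>0$ by Lemma~\ref{lem:equivalent_second_moment_cond}. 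Dropping the ceiling constraint, Lemma~\ref{lem:positivity_general_walks:UB} gives the confinement probability $\le C(u+1)/\sqrt{L_1}\le C'(u+1)/\lambda$; dropping instead the floor constraint and reflecting via $\bar S_i\mapsto\lambda-\bar S_i$ (again a centred walk with the same moment bounds, started at $\lambda-u$) gives $\le C'(\lambda-u+1)/\lambda$; taking the smaller of the two yields the claim, and likewise for $v$. Multiplying the three estimates gives the Lemma.

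The main obstacle — really the only delicate point — is the bookkeeping of the truncation scales: one must verify that $L_1,\,n-L_2\asymp\lambda^2$ and $L_2-L_1\asymp n$ hold uniformly over $\lambda_0\le\lambda\le\sqrt n$, so that Lemma~\ref{lem:positivity_general_walks:UB} genuinely produces a factor $1/\lambda$ and Theorem~\ref{thm:small_ball_fixed_endpoint} genuinely produces $e^{-cn/\lambda^2}$, and to treat the borderline window $\lambda\asymp\sqrt n$ for the middle block via the Local Limit Theorem rather than Theorem~\ref{thm:small_ball_fixed_endpoint}. Everything else is routine and was already carried out in the lower bound.
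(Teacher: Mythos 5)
Your proof is correct and follows essentially the same route as the paper's: the same three-block decomposition at $L_1,L_2$, the same uniform bound on the middle block via Theorem~\ref{thm:small_ball_fixed_endpoint}, and the same treatment of the two side blocks by dropping one constraint, invoking Lemma~\ref{lem:positivity_general_walks:UB}, and using a ceiling-to-floor reflection to capture the $\min(u,\lambda-u)$ dependence. If anything you are slightly more careful than the paper in two places: you spell out the borderline window $n/3<\lambda^2\leq n$, where Theorem~\ref{thm:small_ball_fixed_endpoint}'s hypothesis $\lambda\leq\sqrt{L_2-L_1}$ may fail and the Local Limit Theorem must substitute (the paper glosses over this), and you note explicitly why the increments satisfy the hypotheses of Lemma~\ref{lem:positivity_general_walks:UB}.
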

\begin{proof}
    Let \(L_1 = \lfloor\min( \lambda^2, n/3)\rfloor\), \(L_2 = \lceil\max( n-\lambda^2, 2n/3)\rceil\). Then, for any \(x,y\in [0,\lambda]\), with \(P_u(\bar{S}_{L_1} = x) > 0\) and \(P(\bar{S}_{L_2}=y\given \bar{S}_{L_1} = x) > 0\),
    \begin{multline*}
        P\bigl(\bar{S}_{L_2}=y, 0\leq \min_{i=L_1,\dots,L_2} \bar{S}_i,\ \max_{i=L_1,\dots,L_2} \bar{S}_i\leq \lambda \bgiven \bar{S}_{L_1}= x \bigr)
        \\
        \leq
        P\bigl(\bar{S}_{L_2}=y,\ \max_{i=L_1,\dots,L_2} |\bar{S}_i| \leq 2\lambda \bgiven \bar{S}_{L_1}= x \bigr)
        \leq
        \frac{C}{\lambda}\exp(-\tfrac{cn}{\lambda^2})
    \end{multline*}
    by Theorem~\ref{thm:small_ball_fixed_endpoint}. Using Markov's property, the result will follow if we can prove that
    \begin{gather*}
        P_u\bigl(0\leq \min_{i=1,\dots,L_1} \bar{S}_i,\ \max_{i=1,\dots,L_1} \bar{S}_i\leq \lambda\bigr)
        \leq
        \frac{C(\min(u,\lambda-u) + 1)}{\lambda},
        \\
        P\bigl(0\leq \min_{i=1,\dots,n-L_2} \backvec{S}_i,\ \max_{i=1,\dots,n-L_2} \backvec{S}_i\leq \lambda \bgiven \backvec{S}_0 = v\bigr)
        \leq
        \frac{C(\min(v,\lambda-v) + 1)}{\lambda},
    \end{gather*}
    where \(\backvec{S}_k = \bar{S}_{n-k}\) is the time-reversed walk. We consider the first case, the second being treated in the exact same way. Moreover, a ceiling-to-floor change of point of view shows that it is enough to consider \(u\leq \lambda/2\) which is what we will do. Then,
    \begin{equation*}
        P_u\bigl(0\leq \min_{i=1,\dots,L_1} \bar{S}_i,\ \max_{i=1,\dots,L_1} \bar{S}_i\leq \lambda\bigr)
        \leq
        P_u\bigl(\min_{i=1,\dots,L_1} \bar{S}_i \geq 0\bigr)
        \leq 
        \frac{C(u+1)}{\lambda}
    \end{equation*}
    by Lemma~\ref{lem:positivity_general_walks:UB} and the definition of \(L_1\).
\end{proof}

\subsection{Excursions with a far away ceiling}
\label{subsec:excur_ceiling_far}

In this last section, we prove the results complementary to Lemmas~\ref{lem:excursions:small_ball:LB},~\ref{lem:excursions:small_ball:UB}. Namely, these Lemmas treated the case where the walk is forced to stay between level \(0\) and \(\lambda\) for \(\lambda \leq \sqrt{n}\), and here we treat the regime \(\lambda\geq\sqrt{n}\).

\begin{theorem}
	\label{thm:excursions:ceiling}
	Let \(c_0,\delta_0>0\), \(a\in [0,1]^{\Z}\) be an irreducible, aperiodic sequence. Let \(\alpha \in (0, 2/3)\). There are \(C_-,C_+,c_-,c_+\in (0,+\infty)\), \(n_0\geq 0\), such that the following holds. For any \(n\geq n_0\), \(X_1,\dots, X_n\) independent sequence of random variables with laws in \(\calM_{\delta_0,c_0}^a\), any \(\lambda \geq \sqrt{n}\), and any \(0\leq u,v \leq \lambda\) with \(P_u(\bar{S}_n = v)>0\) and \(|u-v|\leq n^{\alpha}\),
	\begin{multline*}
		P_u\bigl( 0\leq \min_{i=1,\dots,n} \bar{S}_i,\ \max_{i=1,\dots,n} \bar{S}_i\leq \lambda,\ \bar{S}_n = v \bigr)
		\\
		\geq
		\frac{C_-(\min(u, \lambda-u, \sqrt{n})+1)(\min(v, \lambda-v, \sqrt{n})+1)}{n^{3/2}} \exp(-\tfrac{c_-(u-v)^2}{n}),
	\end{multline*}
	and
	\begin{multline*}
		P_u\bigl( 0\leq \min_{i=1,\dots,n} \bar{S}_i,\ \max_{i=1,\dots,n} \bar{S}_i\leq \lambda,\ \bar{S}_n = v \bigr)
		\\
		\leq
		\frac{C_+(\min(u, \lambda-u, \sqrt{n})+1)(\min(v, \lambda-v, \sqrt{n})+1)}{n^{3/2}} \exp(-\tfrac{c_+(u-v)^2}{n}).
	\end{multline*}
\end{theorem}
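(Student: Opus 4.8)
The plan is to bootstrap from the estimates already available at scale $\sqrt{n}$. Two preliminary remarks simplify matters. First, the reflection $\bar S_i\mapsto\lambda-\bar S_i$ preserves the event $\{0\le\bar S_i\le\lambda\ \forall i\}$, sends $(u,v)$ to $(\lambda-u,\lambda-v)$, and replaces $a$ by its mirror (still irreducible and aperiodic with the same $\delta_0,c_0$); likewise time-reversal $\bar S_i\mapsto\bar S_{n-i}$ swaps $u$ and $v$. Second, when $\sqrt{n}\le\lambda\le 3\sqrt{n}$ the statement reduces, up to adjusting constants, to Lemmas~\ref{lem:excursions:small_ball:LB}--\ref{lem:excursions:small_ball:UB} with $K=3$: in that range $\lambda\asymp\sqrt{n}$ and $|u-v|\le\lambda$, so $\min(w,\lambda-w)\asymp\min(w,\lambda-w,\sqrt{n})$, $n/\lambda^2\asymp 1$ and $(u-v)^2/n\asymp 1$. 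Hence we may assume $\lambda\ge 3\sqrt{n}$ (and $n$ large); then no point of $[0,\lambda]$ lies within $\sqrt{n}$ of both endpoints of the interval. We call $w\in\{u,v\}$ \emph{near the floor}, \emph{near the ceiling}, or \emph{interior} according to whether $w\le\sqrt{n}$, $\lambda-w\le\sqrt{n}$, or neither. Whenever Theorem~\ref{thm:Z_bridge:small_ball} or Lemmas~\ref{lem:positivity_fixed_endpoint:LB}--\ref{lem:positivity_fixed_endpoint:UB} are invoked on a sub-interval of length $\asymp n$, I use them with a parameter $\alpha'\in(\alpha,2/3)$, so their hypotheses $|\cdot|\le(\text{length})^{\alpha'}$ accommodate displacements of order $n^\alpha$.

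\textbf{Lower bound.} I use one multi-scale construction. For whichever of $u,v$ is near a boundary, I force the walk, during the first (respectively last) $\lceil n/5\rceil$ steps and while confined to the width-$3\sqrt{n}$ slab along that boundary, to reach the window $W$ of integers at distance in $[\sqrt{n},2\sqrt{n}]$ from that boundary; summing Lemma~\ref{lem:excursions:small_ball:LB} (applied to the slab, and to $\lambda-\bar S$ with the mirrored $a$ for the ceiling) over the $\asymp\sqrt{n}$ points of $W$ gives probability $\gtrsim(\min(w,\lambda-w)+1)/\sqrt{n}$, since at this scale all exponential and $n/\lambda^2$ factors are bounded below. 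After these excursions the walk sits, at times $\lceil n/5\rceil$ and $n-\lceil n/5\rceil$, at points $x,y$ at distance $\ge\sqrt{n}$ from both walls ($u$, resp.\ $v$, themselves when these are interior). On the remaining block, of length $\asymp n$, I lower bound by requiring the walk to stay within $s\sqrt{n}$ of the segment $[x,y]$ for small $s<1$: this forces $0\le\bar S_i\le\lambda$, and Theorem~\ref{thm:Z_bridge:small_ball} (after shifting so the walk starts at $0$, using $|u-v|\le n^\alpha$ to bound $|x-y|\le 3n^\alpha$) yields, after summing over the target window, a contribution $\gtrsim n^{-1/2}e^{-c(x-y)^2/n}\gtrsim n^{-1/2}e^{-c'(u-v)^2/n}$, the $O(\sqrt{n})$ gap between $\{x,y\}$ and $\{u,v\}$ being absorbed into the constants. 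Multiplying the three pieces via the Markov property reproduces the claimed lower bound in every configuration of $u,v$, including $u$ near the floor and $v$ near the ceiling, which is possible only if $\lambda\le 3n^\alpha$ and $|u-v|\asymp\lambda$.

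\textbf{Upper bound.} I distinguish cases. If both $u,v$ are interior, then $P_u(0\le\bar S_i\le\lambda\ \forall i,\ \bar S_n=v)\le P_u(\bar S_n=v)\le Cn^{-1/2}e^{-c(u-v)^2/n}$ by Theorem~\ref{thm:inhomo_LLT} (valid as $|u-v|\le n^\alpha<n^{2/3}$), which is the claim since here $\min(u,\lambda-u,\sqrt{n})=\min(v,\lambda-v,\sqrt{n})=\sqrt{n}$. If (after reflection and time-reversal) $u$ is near the floor and $v$ is not near the ceiling, then $\{0\le\bar S_i\le\lambda\ \forall i\}\subset\{\min_{1\le i\le n}\bar S_i\ge 0\}$, and one checks from $|u-v|\le n^\alpha$ and $\lambda-v\ge\sqrt{n}$ that $v\le Cn^{\alpha'}$, so Lemma~\ref{lem:positivity_fixed_endpoint:UB} gives $\le C(u+1)\min(v+1,\sqrt{n})\,n^{-3/2}e^{-c(u-v)^2/n}$, matching the target because $\lambda-u\ge 2\sqrt{n}$ and $\lambda-v\ge\sqrt{n}$. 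The last case — $u$ near the floor and $v$ near the ceiling, forcing $\lambda\le 3n^\alpha$ — is the one that genuinely needs both walls: I condition at time $\lceil n/2\rceil$ on the value $w\in[0,\lambda]$, bound the first half by Lemma~\ref{lem:positivity_fixed_endpoint:UB} ($u\to w$, staying $\ge 0$) and the second half by the same lemma applied to $\lambda-\bar S$ ($w\to v$, staying $\le\lambda$) — both hypotheses being satisfied because $u,\lambda-v\le\sqrt{n}$ and $\lambda\le 3n^\alpha$ are all $\le n^{\alpha'}$ — then sum over $w$, splitting at $\lambda/2$ so that $\min(\lambda-w+1,\sqrt{n})\asymp\sqrt{n}$ for $w\le\lambda/2$ (and symmetrically $\min(w+1,\sqrt{n})\asymp\sqrt{n}$ for $w>\lambda/2$), and combining the Gaussian factors with $(u-w)^2+(w-v)^2\ge\tfrac14(u-v)^2+\tfrac12(u-w)^2$ (respectively $\tfrac14(u-v)^2+\tfrac12(w-v)^2$) together with the elementary bound $\sum_w\min(w+1,\sqrt{n})e^{-c(u-w)^2/n}\asymp n$. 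This produces $\le C(u+1)(\lambda-v+1)\,n^{-3/2}e^{-c(u-v)^2/n}$, as required.

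\textbf{Main obstacle.} The difficulty is concentrated in the ``thin-sliver'' regime, where $u$ is near the floor and $v$ near the ceiling: there one cannot afford to discard either wall — a random walk of length $n$ still feels a wall at distance $\Theta(\sqrt{n})$ with positive probability, so even dropping the ``far'' wall loses a constant that the target cannot absorb — and the two-sided confinement has to be built into the argument, in the lower bound through the multi-scale routing that deposits the walk at interior points before Theorem~\ref{thm:Z_bridge:small_ball} is applied, and in the upper bound through the midpoint decomposition that lets the positivity estimate be used for $\bar S$ and for $\lambda-\bar S$ at once. Everything else — the $\lambda\asymp\sqrt{n}$ matching of Paragraph~1, the slight inflation of the exponent $\alpha$, and the absorption of $O(\sqrt{n})$ discrepancies into constants — is routine bookkeeping.
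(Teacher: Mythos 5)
Your proof follows the same basic strategy as the paper's: classify each endpoint as near the floor, near the ceiling, or interior; patch together the scale-$\sqrt{n}$ estimates (Theorem~\ref{thm:Z_bridge:small_ball}, Lemmas~\ref{lem:excursions:small_ball:LB}, \ref{lem:positivity_fixed_endpoint:LB}, \ref{lem:positivity_fixed_endpoint:UB}) via Markov decompositions; and handle the opposite-wall regime by a two-sided argument. The paper organizes the lower bound into four explicit cases cut at $\sqrt{n}/10$ and uses a three-piece decomposition at $n/3,\,2n/3$ for the opposite-wall upper bound; you unify the lower bound into a single windowed routing and use a two-piece decomposition at $n/2$ for the opposite-wall upper bound. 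These are equivalent in substance, and your version is slightly cleaner, so the core of the argument is sound.

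There is, however, a genuine gap in the preliminary reduction. You claim that for $\sqrt{n}\le\lambda\le 3\sqrt{n}$ the statement ``reduces to Lemmas~\ref{lem:excursions:small_ball:LB}--\ref{lem:excursions:small_ball:UB} with $K=3$,'' but only Lemma~\ref{lem:excursions:small_ball:LB} carries a parameter $K$ and extends to $\lambda\le K\sqrt{n}$; Lemma~\ref{lem:excursions:small_ball:UB} is stated (and proved, via Theorem~\ref{thm:small_ball_fixed_endpoint}) only for $\lambda\le\sqrt{n}$, so it gives you nothing on $\sqrt{n}<\lambda\le 3\sqrt{n}$. Since your main argument is set up under the standing assumption $\lambda\ge 3\sqrt{n}$ (you rely on the fact that no point is within $\sqrt{n}$ of both walls), the upper bound in the intermediate window $\sqrt{n}<\lambda<3\sqrt{n}$ is not established by anything you wrote. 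This regime is not vacuous: dropping one wall and invoking Lemma~\ref{lem:positivity_fixed_endpoint:UB} fails when $u$ or $v$ is close to the other wall, so one really must keep both constraints. The fix is straightforward and already implicit in your thin-sliver argument: split at $\lambda/2$ rather than at $\sqrt{n}$ (so that ``near floor'' means $w\le\lambda/2$, etc.), and run your two-piece decomposition at time $\lceil n/2\rceil$ — the hypotheses of Lemma~\ref{lem:positivity_fixed_endpoint:UB} hold because every relevant quantity is $\le\lambda\le 3\sqrt{n}\le n^{\alpha'}$ for $\alpha'\in(1/2,2/3)$. A second, cosmetic point: when you say ``I use them with a parameter $\alpha'\in(\alpha,2/3)$,'' Lemmas~\ref{lem:positivity_fixed_endpoint:LB}--\ref{lem:positivity_fixed_endpoint:UB} require $\alpha'\in(1/2,2/3)$, so you should write $\alpha'\in(\max(\alpha,1/2),2/3)$ to cover $\alpha<1/2$; similarly the $O(\sqrt{n})$ terms mean $|x-y|\lesssim n^{\max(\alpha,1/2)}$ rather than $3n^\alpha$. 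These are easily repaired, but as written the proof does not cover the full range $\lambda\ge\sqrt{n}$ claimed by the theorem.
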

\begin{proof}
	We treat separately different cases, and only sketch the proofs as the arguments are very similar to those we used in the rest of the paper. To shorten notations, introduce
	\begin{equation*}
		M^{\geq a}_{k,l} = \cap_{i=k}^l \{\bar{S_i}\geq a\},
		\quad
		M^{\leq a}_{k,l} = \cap_{i=k}^l \{\bar{S_i}\leq a\}.
	\end{equation*}
	
	\medskip
	\textbf{First consider \(u,v\in [\frac{\sqrt{n}}{10}, \lambda -\frac{\sqrt{n}}{10}]\).} There, both bounds follow from Theorems~\ref{thm:Z_bridge:small_ball} and~\ref{thm:inhomo_LLT} and inclusion of events.
	
	\medskip
	\textbf{Second, consider either of the cases \(u\leq \frac{\sqrt{n}}{10}, v \in [\frac{\sqrt{n}}{10}, \lambda -\frac{\sqrt{n}}{10} ]\), or \(u\geq \lambda- \frac{\sqrt{n}}{10}, v \in [\frac{\sqrt{n}}{10},\lambda -\frac{\sqrt{n}}{10} ]\), or one of the two with the roles of \(u,v\) interchanged.}
	There, we can restrict to the first two cases by considering the time-reversed walk, and to the first case by ceiling-to-floor change of viewpoint. We start with the lower bound by setting \(L = \lfloor \frac{n}{2}\rfloor\) and forcing \(\bar{S}_L\in [\frac{\sqrt{n}}{10}, \frac{2\sqrt{n}}{10}]\).
	\begin{multline*}
		P_u\bigl( M_{1,n}^{\geq 0},\ M_{1,n}^{\leq \lambda},\ \bar{S}_n = v \bigr)
		\\
		\geq
		\sum_{\frac{\sqrt{n}}{10}\leq x\leq \frac{2\sqrt{n}}{10} } P_u\bigl( M_{1,L}^{\geq 0},\ M_{1,L}^{\leq \lambda},\ \bar{S}_L = x \bigr) P\bigl( M_{L,n}^{\geq 0},\ M_{L,n}^{\leq \lambda}, \bar{S}_n = v \bgiven \bar{S}_L = x \bigr).
	\end{multline*}
	Now, for \(x\) as in the above sum,
	\begin{gather*}
		P_u\bigl( M_{1,L}^{\geq 0},\ M_{1,L}^{\leq \lambda},\ \bar{S}_L = x \bigr)
		\geq
		\tfrac{C (u+1)}{n},
		\\
		P\bigl( M_{L,n}^{\geq 0},\ M_{L,n}^{\leq \lambda}, \bar{S}_n = v \bgiven \bar{S}_L = x \bigr)
		\geq
		\tfrac{C}{\sqrt{n}}\exp(-c\tfrac{(v-x)^2}{n}),
	\end{gather*}
	by Lemma~\ref{lem:excursions:small_ball:LB} and inclusion of events, and by Theorem~\ref{thm:Z_bridge:small_ball} respectively. Summing over \(x\) then yields the wanted bound. Now, for the upper bound, we use inclusion of events and Lemma~\ref{lem:positivity_fixed_endpoint:UB}:
	\begin{equation*}
		P_u\bigl( M_{1,n}^{\geq 0},\ M_{1,n}^{\leq \lambda},\ \bar{S}_n = v \bigr)
		\leq
		P_u\bigl( M_{1,n}^{\geq 0},\ \bar{S}_n = v \bigr)
		\leq
		\tfrac{C(u+1)}{n}\exp(-c(u-v)^2/n),
	\end{equation*}
	which is the wanted upper bound in this case.
	
	\medskip
	\textbf{Third, consider either of the cases \(u\leq \frac{\sqrt{n}}{10}, v \leq \frac{\sqrt{n}}{10}\), or \(u\geq \lambda- \frac{\sqrt{n}}{10}, v \geq \lambda- \frac{\sqrt{n}}{10}\).}
	The two are related to each other by a ceiling-to-floor change of viewpoint, so we only consider the first case.
	Start with the lower bound. We get a lower bound by inclusion of events and Lemma~\ref{lem:excursions:small_ball:LB}:
	\begin{equation*}
		P_u\bigl( M_{1,n}^{\geq 0},\ M_{1,n}^{\leq \lambda},\ \bar{S}_n = v \bigr)
		\geq
		P_u\bigl( M_{1,n}^{\geq 0},\ M_{1,n}^{\leq \sqrt{n}},\ \bar{S}_n = v \bigr)
		\geq
		\tfrac{C(u+1)(v+1)}{n^{3/2}},
	\end{equation*}
	which is the wanted lower bound in this case. Now, the upper bound follows from inclusion of events and Lemma~\ref{lem:positivity_fixed_endpoint:UB}:
	\begin{equation*}
		P_u\bigl( M_{1,n}^{\geq 0},\ M_{1,n}^{\leq \lambda},\ \bar{S}_n = v \bigr)
		\leq
		P_u\bigl( M_{1,n}^{\geq 0},\ \bar{S}_n = v \bigr)
		\leq
		\tfrac{C(u+1)(v+1)}{n^{3/2}},
	\end{equation*}
	which is the wanted upper bound in this case.
	
	\medskip
	\textbf{Finally, consider either of the cases \(u\leq \frac{\sqrt{n}}{10}, v \geq \lambda-\frac{\sqrt{n}}{10}\), or \(u\geq \lambda- \frac{\sqrt{n}}{10}, v \leq \frac{\sqrt{n}}{10}\).}
	The two are related to each other by a time reversal of the walk, so we only consider the first case.
	Let \(L_1 = \lfloor \frac{n}{3} \rfloor\), \(L_2 = \lceil \frac{2n}{3}\rceil\). Start with the lower bound. We get a lower bound by forcing \(\bar{S}_{L_1}\in [\tfrac{\sqrt{n}}{10},\tfrac{2\sqrt{n}}{10}]\), and \(\bar{S}_{L_2}\in [\lambda-\tfrac{2\sqrt{n}}{10},\lambda- \tfrac{\sqrt{n}}{10}]\). We get
	\begin{multline*}
		P_u\bigl( M_{1,n}^{\geq 0},\ M_{1,n}^{\leq \lambda},\ \bar{S}_n = v \bigr)
		\geq
		\sum_{\frac{\sqrt{n}}{10}\leq x\leq \frac{2\sqrt{n}}{10} }\sum_{\lambda-\frac{2\sqrt{n}}{10}\leq y\leq \lambda - \frac{\sqrt{n}}{10} } P_u\bigl( M_{1,L_1}^{\geq 0},\ M_{1,L_1}^{\leq \lambda},\ \bar{S}_{L_1} = x \bigr)
		\\
		\cdot P\bigl( M_{L_1,L_2}^{\geq 0},\ M_{L_1,L_2}^{\leq \lambda}, \bar{S}_{L_2} = y \bgiven \bar{S}_{L_1} = x \bigr) P\bigl( M_{L_2,n}^{\geq 0},\ M_{L_2,n}^{\leq \lambda}, \bar{S}_{n} = v \bgiven \bar{S}_{L_2} = y \bigr).
	\end{multline*}
	Now, for \(x,y\) as in the above sum,
	\begin{gather*}
		P_u\bigl( M_{1,L_1}^{\geq 0},\ M_{1,L_1}^{\leq \lambda},\ \bar{S}_{L_1} = x \bigr)
		\geq
		\tfrac{C (u+1)}{n},
		\\
		P\bigl( M_{L_2,n}^{\geq 0},\ M_{L_2,n}^{\leq \lambda},\ \bar{S}_{n} = v\bgiven \bar{S}_{L_2} = y \bigr)
		\geq
		\tfrac{C (\lambda - v + 1)}{n},
		\\
		P\bigl( M_{L_1,L_2}^{\geq 0},\ M_{L_1,L_2}^{\leq \lambda}, \bar{S}_{L_2} = y \bgiven \bar{S}_{L_1} = x \bigr)
		\geq
		\tfrac{C}{\sqrt{n}}\exp(-c\tfrac{(v-u)^2}{n}),
	\end{gather*}
	by Lemma~\ref{lem:excursions:small_ball:LB} and inclusion of events for the first two, and by Theorem~\ref{thm:Z_bridge:small_ball} and the observation \(|u-v| - \frac{\sqrt{n}}{5}\leq |y-x|\leq |u-v| \) for the last. Plugging these bounds in the previous display and summing, we obtain the wanted lower bound. Remains to prove the upper bound. We use a similar decomposition and inclusion of events to get
	\begin{multline*}
		P_u\bigl( M_{1,n}^{\geq 0},\ M_{1,n}^{\leq \lambda},\ \bar{S}_n = v \bigr)
		\leq
		\sum_{0\leq x,y\leq \lambda } P_u\bigl( M_{1,L_1}^{\geq 0},\ \bar{S}_{L_1} = x \bigr)
		\\
		\cdot P\bigl(\bar{S}_{L_2} = y \bgiven \bar{S}_{L_1} = x \bigr) P\bigl(M_{L_2,n}^{\leq \lambda}, \bar{S}_{n} = v \bgiven \bar{S}_{L_2} = y \bigr).
	\end{multline*}
	We then bound the first and last probabilities using Lemma~\ref{lem:positivity_fixed_endpoint:UB}, and the second one using Theorem~\ref{thm:inhomo_LLT} to obtain
	\begin{multline*}
		P_u\bigl( M_{1,n}^{\geq 0},\ M_{1,n}^{\leq \lambda},\ \bar{S}_n = v \bigr)
		\\\leq
		\frac{C(u+1)(\lambda-v+1)}{n^{5/2}} \sum_{0\leq x,y\leq \lambda } \exp(-c\bigl((u-x)^2 + (x-y)^2 + (v-y)^2 \bigr)/n )
		\\\leq
		\frac{C(u+1)(\lambda-v+1)}{n^{5/2}} \sum_{0\leq x,y\leq \lambda } \exp(-c\bigl(x^2 + (x-y)^2 + (y-\lambda)^2 \bigr)/n ),
	\end{multline*}
	where we used the constraints on \(u,v\) in the last line. A simple sum-integral comparison gives
	\begin{multline*}
		\sum_{0\leq x,y\leq \lambda } \exp(-c\bigl(x^2 + (x-y)^2 + (y-\lambda)^2 \bigr)/n )
		\\
		\leq
		C \int_{0}^{\infty} dx \int_{0}^{\infty} dy \exp(-c\bigl(x^2 + (x-y)^2 + (\lambda-y)^2 \bigr)/n )
		\leq
		Cn\exp(-c\tfrac{\lambda^2}{n}).
	\end{multline*}
	Using \( \lambda - \tfrac{2\sqrt{n}}{10}\leq |v-u|\leq \lambda\), we obtain the wanted upper bound in the last case.
\end{proof}

\subsection{Tails}
\label{subsec:excursions:tails}
\begin{lemma}
    \label{lem:tails}
    Let \(c_0,\delta_0>0\), \(a\in [0,1]^{\Z}\) be an irreducible, aperiodic sequence. Let \(\beta\in (0,1/6)\). There are \(n_0,t_0\geq 0\), \(C,c\in (0,+\infty)\) such that the following holds. For any \(n\geq n_0\), any \(X_1,\dots, X_n\) independent sequence of random variables with laws in \(\calM_{\delta_0,c_0}^a\), and any \(0\leq u,v \leq \tfrac12t\sqrt{n}\) with \(P_u(\bar{S}_n = v)>0\),
    \begin{equation*}
        P_u\bigl(\min_{i=1,\dots,n} \bar{S}_i\geq 0,\ \bar{S}_{k}\geq t\sqrt{n},\ \bar{S}_n = v\bigr)
        \begin{cases}
            \geq \frac{\min(u+1,\sqrt{n})\min(v+1,\sqrt{n})}{Ctn^{3/2}}\exp(-c t^2),
            \\
            \leq \frac{C\min(u+1,\sqrt{n})\min(v+1,\sqrt{n})}{tn^{3/2}}\exp(- t^2/c),
        \end{cases}
    \end{equation*}
    for all \(n/3\leq k\leq 2n/3\), and all \(t_0\leq t\leq n^{\beta}\).
\end{lemma}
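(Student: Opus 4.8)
The plan is to cut the trajectory at time $k$, apply the fixed-endpoint positivity estimates of Lemmas~\ref{lem:positivity_fixed_endpoint:LB} and~\ref{lem:positivity_fixed_endpoint:UB} to the two resulting pieces, and then perform a Gaussian summation over the value $\bar S_k$. Fix once and for all $\alpha\in(\tfrac12+\beta,\tfrac23)$, which is possible since $\beta<1/6$; note that $n/3\le k\le 2n/3$ makes both $k$ and $n-k$ comparable to $n$, and that $t\sqrt n\le n^{1/2+\beta}<n^{\alpha}$ while $u,v\le\tfrac12 t\sqrt n<n^{\alpha}$. By the Markov property,
\[
	P_u\bigl(\min_{i=1,\dots,n}\bar S_i\ge 0,\ \bar S_k\ge t\sqrt n,\ \bar S_n=v\bigr)
	=\sum_{x\ge t\sqrt n}P_u\bigl(\bar S_k=x,\ \min_{i=1,\dots,k}\bar S_i\ge 0\bigr)\,Q_x,
\]
where $Q_x=P\bigl(\bar S_n=v,\ \min_{i=k+1,\dots,n}\bar S_i\ge 0\bgiven \bar S_k=x\bigr)$ is, by the Markov property, the probability for a fresh walk of length $n-k$ started at $x$, with increments $X_{k+1},\dots,X_n$ (laws still in $\calM_{\delta_0,c_0}^{a}$), to reach $v$ while staying nonnegative.

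I first discard the contribution of $x>n^{\alpha}$: bounding the first factor by $P_u(\bar S_k=x)$ and $Q_x$ by $1$, it is at most $P_u(\bar S_k>n^{\alpha})\le e^{-cn^{2\alpha-1}}$ by Lemma~\ref{lem:very_large_tails_UB}, which is negligible next to the target order $t^{-1}n^{-3/2}e^{-ct^2}$ because $2\alpha-1>2\beta$ and $t\le n^{\beta}$. So, up to this error, only $t\sqrt n\le x\le n^{\alpha}$ contributes. Since $x\ge t_0\sqrt n$ forces $\min(x+1,\sqrt k)=\sqrt k$ and $\min(x+1,\sqrt{n-k})=\sqrt{n-k}$, and $k,n-k\asymp n$, Lemmas~\ref{lem:positivity_fixed_endpoint:LB}--\ref{lem:positivity_fixed_endpoint:UB} give two-sided estimates
\[
	P_u\bigl(\bar S_k=x,\ \min_{i=1,\dots,k}\bar S_i\ge 0\bigr)\asymp\frac{\min(u+1,\sqrt n)}{n}\,e^{\mp c(u-x)^2/n},
	\qquad
	Q_x\asymp\frac{\min(v+1,\sqrt n)}{n}\,e^{\mp c(x-v)^2/n},
\]
the $\mp$ (and the constants $c$) referring to the lower, resp.\ upper, bound.

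Multiplying the two factors and using the identity $(u-x)^2+(x-v)^2=2\bigl(x-\tfrac{u+v}2\bigr)^2+\tfrac12(u-v)^2$, the whole problem reduces to estimating $\sum_{x\ge t\sqrt n}e^{\mp 2c(x-m)^2/n}$ with $m=\tfrac{u+v}2\le\tfrac12 t\sqrt n$. The starting point of this sum lies at distance $d:=t\sqrt n-m\in[\tfrac12 t\sqrt n,\,t\sqrt n]$ from $m$, with $d\gg\sqrt n$ once $t_0$ is chosen large; a sum--integral comparison (Gaussian tail) then yields this sum $\asymp\tfrac nd\,e^{\mp 2cd^2/n}\asymp\tfrac{\sqrt n}{t}\,e^{\mp c't^2}$, using $d^2/n\asymp t^2$, while the leftover factor $e^{\mp\frac12 c(u-v)^2/n}$ lies between $e^{-ct^2}$ and $1$ since $|u-v|\le\tfrac12 t\sqrt n$. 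Collecting everything,
\[
	P_u\bigl(\min_{i=1,\dots,n}\bar S_i\ge 0,\ \bar S_k\ge t\sqrt n,\ \bar S_n=v\bigr)
	\asymp\frac{\min(u+1,\sqrt n)\min(v+1,\sqrt n)}{t\,n^{3/2}}\,e^{\mp c't^2},
\]
which is the claim, with (possibly distinct) constants for the two bounds.

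The hard part is to make the lower bound non-vacuous: Lemma~\ref{lem:positivity_fixed_endpoint:LB} applies to a term of the sum only when both the pinned-and-positive probabilities are strictly positive, so one needs enough admissible indices $x$ near $t\sqrt n$. This is where irreducibility and aperiodicity of $a$, together with the hypothesis $P_u(\bar S_n=v)>0$, enter: for $n$ large, every integer in the relevant window of width $\asymp\sqrt n/t$ around $t\sqrt n$ can be realised both by the first piece (started at $u\ge0$, staying nonnegative) and by the last piece (ending at $v\ge0$, staying nonnegative), so restricting the sum to such $x$ costs at most a constant factor. The truncation bound and the two-sided Gaussian tail comparison, by contrast, are entirely routine.
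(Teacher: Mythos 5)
Your proof follows essentially the same route as the paper's: cut the trajectory at time \(k\), truncate the sum over \(\bar S_k = x\) at \(n^{\alpha}\) via Lemma~\ref{lem:very_large_tails_UB}, apply the fixed-endpoint positivity estimates of Lemmas~\ref{lem:positivity_fixed_endpoint:LB}--\ref{lem:positivity_fixed_endpoint:UB} to the two resulting pieces, and finish with a Gaussian sum--integral comparison yielding the \(t^{-1}e^{-ct^2}\) factor. The only organisational difference is in how the two Gaussian exponents are merged: you complete the square via the identity \((u-x)^2+(x-v)^2 = 2(x-\tfrac{u+v}{2})^2 + \tfrac12(u-v)^2\) and then handle the residual \(e^{-c(u-v)^2/2n}\) factor separately, whereas the paper observes directly that since \(t\ge t_0\ge2\) and \(u,v\le\tfrac12 t\sqrt n\le\tfrac12 x\), one has \(\tfrac{x}{2}\le x-u,\,x-v\le x\), so \((u-x)^2\) and \((x-v)^2\) are both comparable to \(x^2\). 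Both computations give the same \(\sum_{x\ge t\sqrt n} \tfrac1{\sqrt n}e^{-cx^2/n}\asymp t^{-1}e^{-ct^2}\). One point worth noting: you are right that the lower bound requires enough admissible \(x\) in the window for Lemma~\ref{lem:positivity_fixed_endpoint:LB} to be applicable to both pieces; the paper leaves this implicit, and your sketch of why aperiodicity and irreducibility deliver it (density of admissible values in the window, with the LLT ensuring each contributes \(\asymp 1/\sqrt n\)) is the right intuition, though neither treatment spells it out in full.
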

\begin{proof}
Suppose \(t\geq 2\). We treat the upper and lower bound separately. Let \(\alpha = \frac{1}{2}+\beta\in (1/2,2/3)\). Introduce
\begin{equation*}
    D = \{\min_{i=1,\dots,n} \bar{S}_i\geq 0\}.
\end{equation*}

\medskip
\noindent\textbf{We start with the upper bound.}
First,
\begin{equation*}
    P_u\bigl(D,\ \bar{S}_{k}\geq t\sqrt{n},\ \bar{S}_n = v\bigr)
    \leq
    P_u\bigl( \bar{S}_{k}\geq n^{\alpha}\bigr)
    +
    \sum_{t\sqrt{n}\leq x\leq n^{\alpha}} P_u\bigl(D,\ \bar{S}_{k}=x,\ \bar{S}_n = v\bigr).
\end{equation*}
Then, by Lemma~\ref{lem:very_large_tails_UB}, and the fact that \(2\alpha-1 = 2\beta>0\),
\begin{equation*}
    P_u\bigl( \bar{S}_{k}\geq n^{\alpha}\bigr)
    \leq
    e^{-cn^{2\alpha -1}}
    =
    e^{-cn^{2\beta}}
    \leq
    \tfrac{C\min(u+1,\sqrt{n})\min(v+1,\sqrt{n})}{n^{3/2}}e^{-ct^{2}}.
\end{equation*}
Now, using Lemma~\ref{lem:positivity_fixed_endpoint:UB}, for \(t\sqrt{n}\leq x\leq n^{\alpha}\)
\begin{align*}
    &P_u\bigl(D,\ \bar{S}_{k}=x,\ \bar{S}_n = v\bigr)
    \\
    &\quad=
    P_u\bigl(\min_{i=1,\dots,k}\bar{S}_i \geq 0,\ \bar{S}_{k}=x\bigr)P\bigl(\min_{i=1,\dots,n-k}\backvec{S}_i \geq 0,\ \backvec{S}_{n-k}=x \bgiven \backvec{S}_0 = v\bigr)
    \\
    &\quad\leq
    \tfrac{C \min(u+1,\sqrt{n})}{n}e^{-c(u-x)^2/n} \tfrac{C \min(v+1,\sqrt{n})}{n}e^{-c(x-v)^2/n}
    \\
    &\quad\leq
    \tfrac{C \min(u+1,\sqrt{n})\min(v+1,\sqrt{n})}{n^2}e^{-cx^2/n},
\end{align*}
where \(\backvec{S}_i = \bar{S}_{n-i}\) is the time-reversed walk, and we used \(\frac{x}{2}\leq x-u, x-v\leq x \), for the concerned \(x\)'s as \(t\geq 2\) and \(0\leq u,v\leq \tfrac12t\sqrt{n}\leq \tfrac12 x\). Now, by a Riemann sum approximation,
\begin{equation*}
    \sum_{t\sqrt{n}\leq x\leq n^{\alpha}} \tfrac{1}{\sqrt{n}}e^{-cx^2/n}
    \leq
    C\int_{t}^{\infty}dx e^{-cx^2} \leq \tfrac{C}{t}e^{-ct^2}.
\end{equation*}
This concludes the upper bound.

\medskip
\noindent\textbf{We then prove the lower bound.}
We have
\begin{equation*}
    P_u\bigl(D,\ \bar{S}_{k}\geq t\sqrt{n},\ \bar{S}_n = v\bigr)
    \geq
    \sum_{t\sqrt{n}\leq x\leq n^{\alpha}} P_u\bigl(D,\ \bar{S}_{k}=x,\ \bar{S}_n = v\bigr).
\end{equation*}
Then, using Lemma~\ref{lem:positivity_fixed_endpoint:LB}, for \(t\sqrt{n}\leq x\leq n^{\alpha}\),
\begin{align*}
    &P_u\bigl(D,\ \bar{S}_{k}=x,\ \bar{S}_n = v\bigr)
    \\
    &\quad=
    P_u\bigl(\min_{i=1,\dots,k}\bar{S}_i \geq 0,\ \bar{S}_{k}=x\bigr)P\bigl(\min_{i=1,\dots,n-k}\backvec{S}_i \geq 0,\ \backvec{S}_{n-k}=x \bgiven \backvec{S}_0 = v\bigr)
    \\
    &\quad\geq
    \tfrac{C \min(u+1,\sqrt{n})}{n}e^{-c(u-x)^2/n} \tfrac{C \min(v+1,\sqrt{n})}{n}e^{-c(x-v)^2/n}
    \\
    &\quad\geq
    \tfrac{C \min(u+1,\sqrt{n})\min(v+1,\sqrt{n})}{n^2}e^{-cx^2/n},
\end{align*}
where \(\backvec{S}_i = \bar{S}_{n-i}\) is again the time-reversed walk, and we used \(\frac{x}{2}\leq x-u, x-v\leq x \), for the concerned \(x\)'s as \(t\geq 2\) and \(0\leq u,v\leq \tfrac12t\sqrt{n}\leq \tfrac12 x\). Now, by a Riemann sum approximation,
\begin{equation*}
    \sum_{t\sqrt{n}\leq x\leq n^{\alpha}} \tfrac{1}{\sqrt{n}}e^{-cx^2/n}
    \geq
    C\int_{t}^{\infty}dx e^{-cx^2} \geq \tfrac{C}{t+t^{-1}}e^{-ct^2}.
\end{equation*}
This concludes the lower bound.
\end{proof}

\section*{Acknowledgements}

S.O.\ thanks Quentin Berger for showing him the elegant martingale argument of appendix~\ref{app:bounded_increm_pos_proba} which served as the basis for the proofs of Lemmas~\ref{lem:positivity_general_walks:LB} and~\ref{lem:positivity_general_walks:UB}. Y.V. is partially supported by the Swiss NSF through the NCCR SwissMAP.

\appendix

\section{Simple proof in the bounded case}
\label{app:bounded_increm_pos_proba}

\begin{theorem}
	\label{thm:bounded_increm_pos_proba}
	Let \(K>0\), and \(\alpha>0\). Then, there are \(c_+,c_-, n_0>0\) such that the following holds. For any \(n\geq n_0\), and any independent sequence of real random variables \(X_1,X_2,\dots\) satisfying that for all \(i\)'s
	\begin{equation*}
		E(X_i) = 0,\ 
		E(X_i\mathds{1}_{X_i>0})\geq \alpha,\ 
		|X_i| \leq K\ \text{ a.s.},
	\end{equation*}
	one has that for any \(u<\sqrt{n}\),
	\begin{equation*}
		\frac{c_-(1+u)}{\sqrt{n}} \leq P\bigl(\min_{i=1,\dots, n} S_i >0 \bgiven S_0 = u\bigr) \leq \frac{c_+(1+u)}{\sqrt{n}}.
	\end{equation*}
\end{theorem}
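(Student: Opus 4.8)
For this proof, redefine $\tau = \min\{k\ge 1:\ S_k\le 0\}$, so that $\{\tau>n\}=\{\min_{i=1,\dots,n}S_i>0\}$ exactly; note this is an increasing event in $(X_1,\dots,X_n)$ and that $(S_k)_{k\ge 0}$ is a martingale. Three uniform consequences of the hypotheses will be used: from $E(X_i)=0$ and $E(X_i\mathds{1}_{X_i>0})\ge\alpha$ one gets $E(|X_i|)\ge 2\alpha$, hence by Jensen $\Var(X_i)\ge 4\alpha^2$; from $|X_i|\le K$ one gets $\Var(X_i)\le K^2$, $E(X_i^4)\le K^4$, and $S_\tau\in[-K,0]$; and from $\alpha\le E(X_i\mathds{1}_{X_i>0})\le \tfrac\alpha2+K\,P(X_i\ge \tfrac\alpha2)$ one gets $P(X_i\ge\tfrac\alpha2)\ge p_0:=\tfrac{\alpha}{2K}>0$.

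\emph{Upper bound.} Optional stopping of $S$ at the bounded time $\tau\wedge n$ gives $u=E_u(S_n\mathds{1}_{\tau>n})+E_u(S_\tau\mathds{1}_{\tau\le n})$, and $S_\tau\le 0$ forces $E_u(S_n\mathds{1}_{\tau>n})\le u+K$. On $\{\tau>n\}$ one has $S_n>0$, so the FKG inequality applied to the increasing event $\{\tau>n\}$ and the increasing variable $\max(S_n,0)$ yields $E_u(S_n\mid\tau>n)\ge E(\max(u+S_{1,n},0))\ge\tfrac12E(|S_{1,n}|)$; since $E(S_{1,n}^2)=B_n\ge 4\alpha^2 n$, $E(S_{1,n}^4)\le 4K^4n^2$, and $E|Y|\ge E(Y^2)^{3/2}/E(Y^4)^{1/2}$, this is $\ge c(\alpha,K)\sqrt n$ (the inhomogeneous CLT of Theorem~\ref{thm:inhomog_CLT} gives the same). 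Dividing, $P_u(\tau>n)\le(u+K)/(c\sqrt n)\le c_+(1+u)/\sqrt n$ for $n\ge n_0$.

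\emph{Lower bound, $u\ge 1$.} Set $C_0=2K$ and $\sigma=\min\{k\ge 0:\ S_k\ge C_0\sqrt n\}$. The stopped martingale $(S_{k\wedge\sigma\wedge\tau})_k$ takes values in $[-K,\,C_0\sqrt n+K]$, so $u=E_u(S_{n\wedge\sigma\wedge\tau})$; decomposing over the disjoint events $E_1=\{\sigma\le n,\ \sigma<\tau\}$, $E_2=\{\tau\le n,\ \tau<\sigma\}$, $E_3=\{\sigma\wedge\tau>n\}$ and bounding $S_\sigma\le C_0\sqrt n+K$ on $E_1$, $S_\tau\le 0$ on $E_2$, $S_n<C_0\sqrt n$ on $E_3\subset\{\tau>n\}$, we obtain
\[
u\le (C_0\sqrt n+K)\,P_u(E_1)+C_0\sqrt n\,P_u(\tau>n).
\]
On $E_1$ the walk restarts, at the stopping time $\sigma$, from a height $\ge C_0\sqrt n$ having stayed positive; by the strong Markov property and Doob's submartingale inequality (Lemma~\ref{lem:macro_small_ball_walk}), $\max_{\sigma<i\le n}|S_i-S_\sigma|<C_0\sqrt n$ conditionally with probability $\ge 1-K^2/C_0^2=3/4$, and on that event $S_i>0$ for all $i\le n$, i.e.\ $\tau>n$; hence $P_u(E_1)\le\tfrac43 P_u(\tau>n)$. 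Substituting, $u\le C(K)\sqrt n\,P_u(\tau>n)$, so $P_u(\tau>n)\ge u/(C\sqrt n)\ge c_-(1+u)/\sqrt n$ since $u\ge 1$.

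\emph{Lower bound, $0\le u<1$; main obstacle.} Since $u\mapsto P_u(\tau>n)$ is nondecreasing it suffices to bound $P_0(\tau>n)$: with $m=\lceil 2/\alpha\rceil$, with probability $\ge p_0^{\,m}$ the first $m$ increments all exceed $\alpha/2$, so $S_i>0$ for $i\le m$ and $S_m\in[1,mK]$; the Markov property at time $m$ together with the previous case (valid since $mK\le\sqrt{n-m}$ for $n\ge n_0$) gives $P_0(\tau>n)\ge p_0^{\,m}\inf_{1\le w\le mK}P_w(\tau>n-m)\ge c/\sqrt n\ge c_-(1+u)/\sqrt n$, and all cases combine to the claim. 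The only genuinely delicate point is the lower bound: one must quantify that, conditionally on survival, the walk reaches height of order $\sqrt n$ before time $n$ with probability bounded away from $0$; this is exactly what the auxiliary level-$C_0\sqrt n$ hitting time $\sigma$ extracts, the decisive feature being that the martingale stopped at $\sigma\wedge\tau$ is bounded, so that a single application of Doob's maximal inequality converts ``reach $C_0\sqrt n$ and then survive'' into ``survive'', with no need for any $L^2$ control of the conditioned path.
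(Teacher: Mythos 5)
Your proof is correct, modulo one small technical oversight. You take $C_0=2K$ and tacitly assume $\sigma\ge 1$ and $S_\sigma\le C_0\sqrt n+K$ on $E_1$; this requires $u<C_0\sqrt n$, which is not guaranteed by $u<\sqrt n$ when $K<1/2$. Replacing $C_0=2K$ by $C_0=\max(2K,1)$ repairs this (Doob's inequality still gives a conditional probability $\ge 1-K^2/C_0^2\ge 3/4$), and nothing else in the argument changes.

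Beyond that, your argument is genuinely a variant of the paper's, and a slightly cleaner one. Two places differ. In the upper bound, you replace the appeal to the inhomogeneous CLT (Theorem~\ref{thm:inhomog_CLT}) by the elementary Hölder bound $E|Y|\ge E(Y^2)^{3/2}/E(Y^4)^{1/2}$ together with the crude fourth-moment estimate $E(S_{1,n}^4)\le 4K^4n^2$; this is fully self-contained and avoids importing a Berry--Esseen-type theorem into a lemma that only needs a constant lower bound on $P_0(S_n\ge\sqrt n)$. In the lower bound, the paper's proof establishes the conditional expectation bound $E_u(S_n\mid\tau>n)\le c\sqrt n$ by decomposing over the hitting time $\tau'$ of level $A\sqrt n$ and controlling $E(S_{k+1,n}\mid S_k=v,\tau>n)$ term by term; you instead apply optional stopping directly to the \emph{doubly} stopped martingale $S_{n\wedge\sigma\wedge\tau}$, so that all terms are automatically bounded by $C_0\sqrt n+K$, and the only remaining work is the probability comparison $P_u(E_1)\le\tfrac43 P_u(\tau>n)$ obtained from the strong Markov property and a single Doob maximal inequality. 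The key mechanism (once at height $\Theta(\sqrt n)$, the walk survives with probability bounded below) is the same, but your packaging avoids manipulating conditional expectations and is shorter. Your handling of $u<1$ (pushing the walk up to height $\ge 1$ in $m=\lceil 2/\alpha\rceil$ steps and restarting) is also slightly different from the paper's one-step restart but serves the same purpose.
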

\begin{proof}
	Denote \(P_u = P(\cdot \given S_0 =u)\), \(\tau = \min\{k\geq 1:\ S_k\leq 0\}\). Then, note that \((S_k)_{k\geq 1}\) is a martingale. Thus, by Doob's optional stopping theorem,
	\begin{equation*}
		E_u(S_{\tau \wedge n}) = u.
	\end{equation*}
	But on the other hand,
	\begin{align*}
		E_u(S_{\tau \wedge n})
		&=
		E_u(\mathds{1}_{\tau> n} S_n) + E_u(\mathds{1}_{\tau\leq n} S_{\tau})
		\\
		&\begin{cases}
			\leq P_u(\tau > n) E_u(S_n\given \tau >n) \\
			\geq P_u(\tau > n) \sqrt{n} P_u(S_n\geq \sqrt{n}) -K
		\end{cases},
	\end{align*}
	as \(S_{\tau}\leq 0\) a.s., \(|S_{\tau} |\leq |X_{\tau}|\leq K\), and by FKG inequality we have
	\begin{equation*}
		E_u(\mathds{1}_{\tau> n} S_n)
		\geq
		P_u(\tau >n) E_u\bigl(\max(S_n,0)\bigr)
		\geq
		P_u(\tau >n) \sqrt{n} P_0(S_n \geq \sqrt{n}),
	\end{equation*}
	(both \(\tau\) and \(\max(S_n,0)\) are non-decreasing functions of \(X_1,\dots,X_n\)). We can now use that by the (inhomogeneous) CLT (Theorem~\ref{thm:inhomog_CLT}) and the uniform lower bound on the variances (provided by \(\alpha^2>0\)), \(P_0(S_n \geq \sqrt{n})\geq c\) for some \(c>0\) and \(n\) large enough (uniformly over the sequence \(X_1,X_2,\dots\)). This yields
	\begin{equation*}
		P_u(\tau > n) \sqrt{n} c -K \leq u.
	\end{equation*}
	Rearranging gives the upper bound. For the lower bound, we claim that
	\begin{equation}
		\label{eq:prf:martingal_bnd}
		E_u(S_n\given \tau >n) \leq c\sqrt{n}
	\end{equation}
	for some \(c>0\), and any \(n\) large enough (uniformly over the sequence \(X_1,X_2,\dots\)). Plugging this in the optional stopping display, we get
	\begin{equation*}
		P_u(\tau >n) \geq \frac{c u}{\sqrt{n}}.
	\end{equation*}
	Now, for \(u=0\), we have
	\begin{equation*}
		P_0(\tau>n)
		=
		E_0\bigl(\mathds{1}_{X_1 >0}P_{X_1}(\tau >n-1)\bigr)
		\geq
		\frac{c}{\sqrt{n}}E_0\bigl(\mathds{1}_{X_1 >0}X_1\bigr)
		\geq
		\frac{c\alpha}{\sqrt{n}}.
	\end{equation*}
	Remains to show~\eqref{eq:prf:martingal_bnd}. Introduce \(\tau' = \min\{k\geq 1: S_k\geq A\sqrt{n}\}\), with \(A>0\) to be fixed later.
	\begin{equation*}
		E_u(\mathds{1}_{\tau> n} S_n)
		=
		E_u(\mathds{1}_{\tau> n}\mathds{1}_{\tau'> n} S_n) + E_u(\mathds{1}_{\tau> n}\mathds{1}_{\tau'\leq n} S_n).
	\end{equation*}
	Now, \(E_u(\mathds{1}_{\tau> n}\mathds{1}_{\tau'> n} S_n)\leq P(\tau >n)A\sqrt{n}\). Remains to bound the other term. It is equal to
	\begin{equation*}
		P_u(\tau>n)\sum_{k=1}^n E_u(\mathds{1}_{\tau'=k} S_n\given \tau> n).
	\end{equation*}
	Using Markov's property and \(S_{i,j} = \sum_{k=i}^j X_k\),
	\begin{align*}
		&E_u(\mathds{1}_{\tau'=k} S_n\given \tau> n)
		\\
		&\quad=
		E_u\bigl( \mathds{1}_{S_1,\dots,S_{k-1}<A\sqrt{n}}\mathds{1}_{S_{k}>A\sqrt{n}} (S_{k-1} + X_k + E(S_{k+1,n} \given S_k, \tau >n) )\bgiven \tau >n\bigr)
		\\
		&\quad\leq
		E_u\bigl( \mathds{1}_{\tau' = k} (A\sqrt{n} + K + E(S_{k+1,n} \given S_k, \tau >n) )\bgiven \tau >n\bigr).
	\end{align*}
	Now, taking \(A = \sqrt{2} K\), for any \(v>A\sqrt{n}\),
	\begin{multline*}
		E(S_{k+1,n} \given S_k=v, \tau >n)
		= \frac{E(S_{k+1,n} \mathds{1}_{\tau>n} \given S_k=v,\tau>k)}{P(\tau > n \given S_k = v, \tau >k)} \\
		\leq
		\frac{E(|S_{k+1,n}|)}{P(\tau > n \given S_k = v, \tau >k)}
		\leq
		2K\sqrt{n},
	\end{multline*}
	as
	\begin{multline*}
		P(\tau > n \given S_k = v, \tau >k)
		\geq
		P(\max_{i=k+1,\dots, n} |S_{k+1,i}|\leq A\sqrt{n})
		\\
		\geq
		1-\frac{E(S_{k+1,n}^2)}{A^2n}
		\geq
		1-\frac{K^2}{A^2}
		= \frac{1}{2},
	\end{multline*}
	by Doob's submartingale inequality. Combining everything gives~\eqref{eq:prf:martingal_bnd}.
\end{proof}

\bibliographystyle{plain}
\bibliography{BibTeX}

\end{document}